\documentclass[11pt]{amsart}
\usepackage[foot]{amsaddr}
\usepackage{amsfonts}
\usepackage[latin9]{inputenc}
\usepackage{amsmath}
\usepackage{amssymb}
\usepackage{breqn}
\usepackage{url}
\usepackage{graphicx}
\usepackage[pdfstartview=FitH]{hyperref}
\usepackage{amsthm}
\usepackage{hyperref}
\usepackage{url}
\usepackage{cleveref}
\usepackage{verbatim}
\usepackage{bbm}

\setlength{\textwidth}{\paperwidth}
\addtolength{\textwidth}{-2.5in}
\calclayout

\crefformat{section}{\S#2#1#3} % see manual of cleveref, section 8.2.1
\crefformat{subsection}{\S#2#1#3}
\crefformat{subsubsection}{\S#2#1#3}

%\usepackage{sectsty}
%\allsectionsfont{\centering}
\newcommand{\proj}{\operatorname{proj}}

\newcommand{\im}{\operatorname{Im}}

\newcommand{\B}{\operatorname{\mathbb{E}}}

\newcommand{\supp}{\operatorname{supp}}
\newcommand{\St}{\operatorname{St}}
\newcommand{\SL}{\operatorname{SL}}
\newcommand{\GL}{\operatorname{GL}}

\newcommand{\UT}{\operatorname{UT}}
\newcommand{\LT}{\operatorname{LT}}
\newcommand{\Prob}{\operatorname{Prob}}
\newcommand{\Isom}{\operatorname{Isom}}

\makeatletter
\renewcommand{\email}[2][]{%
  \ifx\emails\@empty\relax\else{\g@addto@macro\emails{,\space}}\fi%
  \@ifnotempty{#1}{\g@addto@macro\emails{\textrm{(#1)}\space}}%
  \g@addto@macro\emails{#2}%
}
\makeatother

\newcommand{\RN}[1]{%
  \textup{\uppercase\expandafter{\romannumeral#1}}%
}

\makeatletter
\begin{document}
\newtheorem{theorem}{Theorem}[section]
\newtheorem{lemma}[theorem]{Lemma}
\newtheorem{claim}[theorem]{Claim}
\newtheorem{proposition}[theorem]{Proposition}
\newtheorem{corollary}[theorem]{Corollary}
\theoremstyle{definition}
\newtheorem{definition}[theorem]{Definition}
\newtheorem{observation}[theorem]{Observation}
\newtheorem{example}[theorem]{Example}
\newtheorem{remark}[theorem]{Remark}

\title[]{Banach property (T) for $\rm SL_n (\mathbb{Z})$ and its applications}
\author{Izhar Oppenheim}
\address{Department of Mathematics, Ben-Gurion University of the Negev, Be'er Sheva 84105, Israel}
\email{izharo@bgu.ac.il}
\thanks{The author was partially supported by ISF grant no. 293/18}
\subjclass[2010]{Primary 22D12,  22E40; Secondary 46B85,  20F65}
\maketitle
\begin{abstract}
We prove that  a large family of higher rank simple Lie groups (including $\rm SL_n (\mathbb{R})$ for $n \geq 3$) and their lattices have Banach property (T) with respect to all super-reflexive Banach spaces. 

Two consequences of this result are: First,  we deduce Banach fixed point properties with respect to all super-reflexive Banach spaces for a large family of higher rank simple Lie groups.  For example,  we show that for every $n \geq 4$,  the group $\rm SL_n (\mathbb{R})$ and all its lattices have the Banach fixed point property with respect to all super-reflexive Banach spaces.  Second,  we settle a long standing open problem and show that the Margulis expanders (Cayley graphs of $\rm SL_{n} (\mathbb{Z} / m \mathbb{Z} )$ for a fixed $n \geq 3$ and $m$ tending to infinity) are super-expanders.  

All of our results stem from proving Banach property (T) for $\rm SL_3 (\mathbb{Z})$.  Our method of proof for $\rm SL_3 (\mathbb{Z})$ relies on a novel proof for relative Banach property (T) for the uni-triangular subgroup of $\SL_3 (\mathbb{Z})$.  This proof of relative property (T) is new even in the classical Hilbert setting and is interesting in its own right.  
\end{abstract}

%\textbf{Mathematics Subject Classification (2010)}. Primary 22D12,  22E40,Secondary 46B85,  20F65.   \\
%\textbf{Keywords}. .

\section{Introduction}

Property (T) was introduced by Kazhdan in \cite{Kazhdan} as a tool to prove finite generation.  Since then it was found useful for a wide range of applications in various different areas of mathematics (see \cite{PropTBook} and the introduction of \cite{BFGM},  and reference therein).  We mention two such applications that are relevant in the context of this paper: First, property (T) for a group $G$ is equivalent (under some mild assumptions on $G$) to property (FH) which states that every continuous isometric affine action of $G$ on a real Hilbert space admits a fixed point.  Second,  Margulis gave the first explicit construction of expanders using property (T).

More recently,  Bader, Furman, Gelander and Monod \cite{BFGM} defined a Banach version of property (T) (and its connection to Banach fixed point properties).   They conjectured that higher rank algebraic group should have this form of Banach property (T) with respect to the class of all super-reflexive Banach spaces.  Roughly simultaneously to the work of \cite{BFGM}, V. Lafforgue \cite{Laff2} proved that groups of the form $\SL_3 (F)$ where $F$ is a non-Archimedian local field have a strong form of Banach property (T) for large classes of Banach spaces and that this strong form of Banach property (T) implies the fix point property.  In particular,  his work corroborates the conjecture of \cite{BFGM}: Namely,  a consequence of Lafforgue's work it that the groups $\SL_3 (F)$ where $F$ is a non-Archimedian local field have Banach property (T) and the fixed point property with respect to all super-reflexive Banach spaces.  Later,  Liao \cite{Liao} extended the work of Lafforgue and proved the strong version of Banach property (T) holds for every higher rank connected almost $F$-simple algebraic group,  where $F$ is a non-Archimedean local field.  In his work \cite{Laff2},  Lafforgue also showed how to use his result to construct super-expanders, i.e., families of graphs that are expanders with respect to every super-reflexive Banach space (see exact definition below). 

Unlike in the non-Archimedian case,  much less was known regarding Banach property (T) for algebraic groups over $\mathbb{R}$ (and their lattices).  In the paper of Bader,  Furman, Gelander and Monod \cite{BFGM} they showed that higher rank algebraic groups have Banach property (T) (and fixed point properties) for $L^p$ spaces where $1 < p < \infty$ (the case $p=1$ was later resolved in \cite{BGM}).  For general super-reflexive spaces (that are not $L^p$ spaces),  partial results were proven by de Laat, Mimura and de la Salle in various collaborations \cite{LaatSalle3, Salle,LMS,  LaatSalle2, Salle2}.  However,  non of these works cover all super-reflexive spaces even for a single group of the form $\SL_n (\mathbb{R})$ with some $n \geq 3$.   

In this paper,  we make a major breakthrough regarding Banach property (T):  We show that large family of connected simple (higher rank) Lie groups have Banach property (T) with respect to all super-reflexive Banach spaces.  In particular,  we show that for every $n \geq 3$,  the groups $\SL_n (\mathbb{Z})$ and $\SL_n (\mathbb{R})$ have Banach property (T) with respect to all super-reflexive Banach spaces.  This has several striking consequences: First, it allows us to prove that a family of connected simple (higher rank) Lie groups and their lattices  have the fixed point property with respect to all super-reflexive Banach spaces.  In particular,  we show that for every $n \geq 4$,  $\SL_n (\mathbb{R})$ and all its lattices have the fixed point property with respect to all super-reflexive Banach spaces.  Second,  we settle a long standing open problem and show that the Margulis expanders (i.e.,  Cayley graphs of $\SL_n (\mathbb{Z} / i \mathbb{Z})$ where $n \geq 3$ is a fixed integer) are super-expanders.  Last,  we show that for every  $n \geq 5$,  the groups $\SL_n (\mathbb{Z})$ and $\SL_n (\mathbb{R})$ have a strengthening of the fixed point property (property $(FF)$ defined below) with respect to all super-reflexive Banach spaces

Our method for proving Banach property (T) is also novel.  The prior works of de Laat, Mimura and de la Salle mentioned above were based on generalizing the work of Lafforgue on strong (Hilbert) property (T) of $\SL_3 (\mathbb{R})$ to the Banach setting.  Our approach is very different: We first prove a relative version of Banach property (T) for the uni-triangular matrices in $\SL_3 (\mathbb{Z})$ with respect to super-reflexive Banach spaces.  We note that this proof is new even in the Hilbert setting.  After that,  we use a bounded generation argument \`{a} la Shalom to deduce Banach property (T) for $\SL_3 (\mathbb{Z})$ for all super-reflexive spaces.  Then using Howe-Moore we deduce Banach property $(T)$ for simple Lie groups with whose Lie algebra contain $\mathfrak{sl}_3 (\mathbb{R})$ (see exact formulation below).  We note that most of this proof scheme was known to experts, but prior to this work, it was not known how to prove the relative Banach property $(T)$ result.

\subsection{Uniformly convex and super-reflexive Banach spaces}

A Banach space $\B$ is called \textit{uniformly convex} if there is a function $\delta: (0,2] \rightarrow (0,1]$ called the \textit{modulus of convexity} such that for every $0< \varepsilon \leq 2$ and every $\xi,  \eta \in \B$ with $\Vert \xi \Vert = \Vert \eta \Vert =1$, if $\Vert \xi -\eta \Vert \geq \varepsilon$, then $\Vert \frac{\xi+ \eta}{2} \Vert \leq (1- \delta (\varepsilon))$.   
%A Banach space is called \textit{uniformly smooth} if its dual is uniformly convex.  A Banach space is \textit{ucus} if it is uniformly convex and uniformly smooth.  

We will not recall the definition of super-reflexive Banach spaces, but only note that by \cite[Theorem A.6]{BL} a Banach space $\B$ is super-reflexive if and only if there is a equivalent uniformly convex norm on $\B$ (a reader who is not familiar with super-reflexive Banach spaces can take this as a definition).  

%In \cite[Proposition 2.3]{BFGM} it was proven that if a group acts by uniformly bounded linear transformations on a super-reflexive space $\B$, then there is a compatible uniformly convex norm on $\B$ and the group acts by isometries with respect to that norm.  Thus below we will only consider isometric action on uniformly Banach spaces in lieu of isometric action on super-reflexive Banach spaces. 

\subsection{Banach property (T) for $\SL_3 (\mathbb{Z})$ and Simple Lie groups} 

Given a topological group $G$ and a Banach space $\B$,  a \textit{linear isometric representation of $G$ on $\B$} is a continuous homomorphism $\pi : G \rightarrow O (\B)$, where $O (\B)$ denotes the group of all invertible linear isometries of $\B$ with the strong operator topology.  A linear isometric representation $\pi$ is said to have \textit{almost invariant vectors} if for every compact set $K \subseteq G$ and every $\varepsilon >0$,  there is a unit vector $\xi \in \B$ such that 
$$\sup_{g \in K} \Vert  \pi  (g) \xi - \xi \Vert < \varepsilon.$$

In \cite{BFGM},  Bader,  Furman,  Gelander and Monod defined Banach property (T) of a group $G$ as follows:
\begin{definition}
\label{Banach property T def}
Let $\B$ be Banach space and $G$ be a topological group.  The group $G$ has property $(T_\B)$ if for every continuous linear isometric representation $\pi : G  \rightarrow O( \B)$,  the quotient representation $\pi ' : G \rightarrow O (\B / \B^{\pi (G)})$ does not have almost invariant vectors.  
\end{definition}

The main result of this paper is proving Banach property $(T_\B)$ with respect to every super-reflexive Banach space $\B$ for $\SL_3 (\mathbb{Z})$:
\begin{theorem}
\label{T for SL_3 Z intro thm}
Let $\B$ be a super-reflexive Banach space.  The group  $\SL_3 (\mathbb{Z})$ has property $(T_{\B})$.  
\end{theorem}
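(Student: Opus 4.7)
The plan is the two-step strategy indicated in the introduction. By \cite[Theorem A.6]{BL} I would first renorm the super-reflexive space $\B$ to be uniformly convex and use only its modulus of convexity $\delta$. Fix a continuous linear isometric representation $\pi:\SL_3(\mathbb{Z})\to O(\B)$ and let $\pi'$ denote the quotient representation on $\B':=\B/\B^{\pi(\SL_3(\mathbb{Z}))}$; the goal is to show $\pi'$ has no almost invariant unit vectors.

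The central intermediate step is a relative Banach property $(T_\B)$ for the pair $(\SL_3(\mathbb{Z}),\UT_3(\mathbb{Z}))$: namely, that $\pi'$ admits no $\UT_3(\mathbb{Z})$-almost-invariant unit vectors. Granted this, I would bootstrap to the full statement by Carter--Keller bounded generation. Conjugation by the longest Weyl element of $\SL_3(\mathbb{Z})$ sends $\UT_3(\mathbb{Z})$ to $\LT_3(\mathbb{Z})$, so the same relative statement transfers to $\LT_3(\mathbb{Z})$. Carter--Keller supplies an integer $N$ such that every element of $\SL_3(\mathbb{Z})$ is a product of at most $N$ elements of $\UT_3(\mathbb{Z})\cup\LT_3(\mathbb{Z})$; a cascading triangle-inequality estimate then converts a joint $\UT_3(\mathbb{Z})$- and $\LT_3(\mathbb{Z})$-invariance bound on a unit vector $\xi\in\B'$ into an $\SL_3(\mathbb{Z})$-invariance bound, contradicting $(\B')^{\pi'(\SL_3(\mathbb{Z}))}=0$.

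All the difficulty therefore lies in the relative statement. The classical Hilbert route goes through the embedded semidirect product $\SL_2(\mathbb{Z})\ltimes\mathbb{Z}^2 \hookrightarrow \SL_3(\mathbb{Z})$ and uses spectral decomposition on the dual torus $\mathbb{T}^2$ together with the $\SL_2(\mathbb{Z})$-action on it; the Bader--Furman--Gelander--Monod argument for $L^p$ replaces that spectral picture by techniques specific to $L^p$ structure. Neither tool is available on a general uniformly convex $\B$, where one has only $\delta$. The approach I would attempt is to construct, from the conjugation action of a suitable copy of $\SL_2(\mathbb{Z})\subset\SL_3(\mathbb{Z})$ on a rank-$2$ abelian subgroup $A\subset\UT_3(\mathbb{Z})$, an explicit finite convex combination $\sum_i \lambda_i\,\pi(\gamma_i)$ of isometries whose restriction to the subspace of non-$A$-invariant vectors in $\B'$ has operator norm strictly smaller than $1$, with the defect controlled purely by $\delta$. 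Invariance under all of $\UT_3(\mathbb{Z})$ would then be obtained by applying this to two distinct rank-$2$ subgroups (say the top-row and last-column entries) whose commutator generates the centre of $\UT_3(\mathbb{Z})$.

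The hard part is unambiguously the construction of that contracting averaging operator on a general uniformly convex $\B$: it is here that the Hilbert-theoretic spectral picture must be replaced by an argument using only $\delta$, and this is precisely the novelty the paper claims. Once a uniform-in-$\delta$ contraction estimate is in place, the subsequent passage through Weyl conjugation and Carter--Keller bounded generation is essentially formal, so the whole weight of the argument rests on this single relative estimate for the uni-triangular subgroup.
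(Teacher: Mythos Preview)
Your high-level architecture is exactly the paper's: pass to a uniformly convex norm, reduce property $(T_\B)$ to the case $\B^{\pi(G)}=\{0\}$, prove relative Banach property $(T_\B)$ for the pairs $(\SL_3(\mathbb{Z}),\UT_3(\mathbb{Z}))$ and $(\SL_3(\mathbb{Z}),\LT_3(\mathbb{Z}))$, and then use Carter--Keller bounded generation together with a cascading triangle-inequality (the paper's Lemma~\ref{bounded orbit lemma} and Theorem~\ref{bounded generation imply prop T thm}) to conclude. So far you and the paper agree.

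Where you diverge is in the mechanism for the relative step, and here there is a genuine gap. You propose to work inside an $\SL_2(\mathbb{Z})\ltimes\mathbb{Z}^2$ copy and to produce a \emph{single} finite convex combination whose restriction to the non-$A$-invariant part has operator norm strictly less than $1$. This is precisely the Hilbert/$L^p$ paradigm you yourself flag as unavailable: no one knows how to extract such a uniform spectral gap from an $\SL_2(\mathbb{Z})$ action on $\widehat{\mathbb{Z}^2}$ using only a modulus of convexity, and your paragraph does not supply one. The paper avoids $\SL_2(\mathbb{Z})$ entirely. Its relative argument lives in the various Heisenberg copies $H_{i,k}=\langle e_{i,j},e_{j,k},e_{i,k}\rangle$ and proceeds by building a \emph{sequence} of probability measures
\[
T_d = X_{1,2}^{4d} X_{1,3}^{10d} X_{2,3}^{9d} X_{2,1}^{9d} X_{3,1}^{10d} X_{3,2}^{4d},\qquad
S_d = X_{2,3}^{4d} X_{1,3}^{10d} X_{1,2}^{9d} X_{3,2}^{9d} X_{3,1}^{10d} X_{2,1}^{4d},
\]
supported on products of \emph{all six} elementary subgroups (not merely on $\UT_3(\mathbb{Z})$ or on an $\SL_2\ltimes\mathbb{Z}^2$ block), and shows $\Vert T_d-S_{d\pm 1}\Vert_{\mathcal{U}(\mathcal{E}_{uc}(\delta_0))}\le L\,d\,r^{\sqrt d}$ for some $r<1$ depending only on $\delta_0$. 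The technical core is a norm estimate on Heisenberg averages (Corollary~\ref{X,Y prod ineq final coro} and Theorem~\ref{main ineq thm}) obtained from the commutator identity $[x,y]=z$ together with a two-operator uniform-convexity inequality (Proposition~\ref{uc ineq prop}); these power ``switch'' and ``up/down'' moves that let one morph $T_d$ into $S_{d\pm1}$ at exponentially small cost. The alternating sequence $h_d$ is then Cauchy, and its limit $f$ satisfies $\pi(e_{1,2}(1))\pi(f)=\pi(f)=\pi(e_{2,3}(1))\pi(f)$, so $\operatorname{Im}\pi(f)\subseteq\B^{\pi(\UT_3(\mathbb{Z}))}$. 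This is relative property $(T^{\mathrm{proj}})$, which the paper shows implies your relative $(T_\B)$.

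In short: your outer scaffolding is correct and matches the paper, but your proposed attack on the relative step (contracting $\SL_2$-average on a rank-$2$ abelian piece) is a restatement of the open problem rather than a solution. The paper's actual innovation is the Heisenberg-averaging/Cauchy-sequence method, which sidesteps any need for a spectral gap coming from an $\SL_2$ action.
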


Using Howe-Moore Theorem,  this allows us to deduce the following theorem:
\begin{theorem}
\label{H rank intro thm}
Let $G$ be a connected simple real Lie group with a finite center and $\mathfrak{g}$ the Lie algebra of $G$.  If  $\mathfrak{g}$ contains $\mathfrak{sl}_3 (\mathbb{R})$ as a Lie sub-algebra, then $G$ and all its lattices have property $(T_{\B})$ for every super-reflexive Banach space $\B$.  
\end{theorem}

This corroborates a conjecture stated in \cite{BFGM} in which it was conjectured that all higher rank almost simple algebraic groups have property $(T_\B)$ for every super-reflexive Banach space $\B$ (see \cite[Remark 2.28]{BFGM}).  An immediate consequence is:
\begin{theorem}
\label{T for SL_n R intro thm}
Let $n \geq 3$ and $\B$ be a super-reflexive Banach space.  The group  $\SL_n (\mathbb{R})$ and all its lattices have property $(T_{\B})$.  In particular,  for every $n \geq 3$ and every super-reflexive Banach space $\B$,  the group  $\SL_n (\mathbb{Z})$ has property  $(T_{\B})$.
\end{theorem}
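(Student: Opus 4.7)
The plan is to reduce Theorem \ref{T for SL_n R intro thm} entirely to a verification that $\SL_n(\mathbb{R})$ satisfies the hypotheses of Theorem \ref{H rank intro thm}, and then invoke a standard lattice embedding for the integer case. Since both of the ingredients we need are essentially structural facts about $\SL_n(\mathbb{R})$, I do not expect a genuinely hard step; the entire argument is a short deduction from what has already been proved.

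First, I would check the four hypotheses of Theorem \ref{H rank intro thm} for $G = \SL_n(\mathbb{R})$ with $n \geq 3$: connectedness (standard, e.g.\ via the $KAN$ decomposition or by observing that $\SL_n(\mathbb{R})$ is path-connected through elementary matrices), simplicity as a real Lie group (equivalently, simplicity of $\mathfrak{sl}_n(\mathbb{R})$ for $n \geq 2$, a classical fact), and finiteness of the center (which is $\{\pm I\}$ when $n$ is even and trivial when $n$ is odd). Finally, I would exhibit $\mathfrak{sl}_3(\mathbb{R})$ as a Lie subalgebra of $\mathfrak{sl}_n(\mathbb{R})$ via the block embedding
\[
A \longmapsto \begin{pmatrix} A & 0 \\ 0 & 0 \end{pmatrix}, \qquad A \in \mathfrak{sl}_3(\mathbb{R}),
\]
which is a Lie algebra homomorphism and lands in $\mathfrak{sl}_n(\mathbb{R})$ because $\tr(A) = 0$. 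Thus Theorem \ref{H rank intro thm} applies and yields property $(T_\B)$ for $\SL_n(\mathbb{R})$ and for every lattice in $\SL_n(\mathbb{R})$, with respect to any super-reflexive Banach space $\B$.

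For the second assertion, I would recall that $\SL_n(\mathbb{Z})$ is a lattice in $\SL_n(\mathbb{R})$; this is classical, following, for instance, from the Borel--Harish-Chandra theorem (or, more elementarily, from Minkowski's reduction theory and Siegel's fundamental domain for $\SL_n(\mathbb{R})/\SL_n(\mathbb{Z})$). Consequently the case $G = \SL_n(\mathbb{R})$ of Theorem \ref{H rank intro thm} already covers $\SL_n(\mathbb{Z})$ as one of the lattices, completing the proof. The only potential subtlety worth a sentence of commentary is that Theorem \ref{H rank intro thm} passes property $(T_\B)$ from the ambient Lie group to all of its lattices, so no separate argument is needed for non-uniform lattices such as $\SL_n(\mathbb{Z})$; this is why the theorem is genuinely an immediate consequence rather than requiring an additional induction or bounded generation step of its own.
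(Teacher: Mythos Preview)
Your proposal is correct and matches the paper's approach: the paper presents Theorem \ref{T for SL_n R intro thm} as an immediate consequence of Theorem \ref{H rank intro thm}, and your verification of the hypotheses (connectedness, simplicity, finite center, and the block embedding $\mathfrak{sl}_3(\mathbb{R}) \hookrightarrow \mathfrak{sl}_n(\mathbb{R})$) together with the classical fact that $\SL_n(\mathbb{Z})$ is a lattice in $\SL_n(\mathbb{R})$ is exactly what is needed.
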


\subsection{Applications}

\paragraph{\textbf{Banach fixed point property for simple Lie groups}}

Given a Banach space $\B$,  a topological group $G$ is said to have property $(F_{\B})$ is every affine (continuous) isometric action of $G$ on $\B$ admits a fixed point.  

As an application of Banach property $(T)$ for $\SL_{3} (\mathbb{R})$ we prove:
\begin{theorem}
\label{H rank intro thm 2}
Let $G$ be a connected simple real Lie group with a finite center and $\mathfrak{g}$ be the Lie algebra of $G$.  If $\mathfrak{g}$ contains $\mathfrak{sl}_4 (\mathbb{R})$ as a Lie sub-algebra, then $G$ and any lattice $\Gamma < G$ have property $(F_\B)$ for every super-reflexive Banach space $\B$.  
\end{theorem}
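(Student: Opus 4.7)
The plan is to upgrade property $(T_\B)$, already available for $G$ from \cref{H rank intro thm}, to the fixed point property $(F_\B)$, using the additional algebraic room supplied by the embedding $\mathfrak{sl}_4(\mathbb{R}) \hookrightarrow \mathfrak{g}$. Let $\alpha$ be a continuous affine isometric action of $G$ on $\B$ with linear part $\pi$. After renorming $\B$ to be uniformly convex (possible since $\B$ is super-reflexive), the Alaoglu--Birkhoff / BFGM decomposition gives a $G$-invariant splitting $\B = \B^{\pi(G)} \oplus \B_0$, reducing the problem to finding a fixed point for the induced affine action on $\B_0$, whose linear part has no nonzero invariants. In a uniformly convex space the existence of a fixed point is equivalent to boundedness of the orbit $\{\alpha(g)\cdot 0 : g \in G\}$ (take its Chebyshev center); equivalently, to boundedness of the $1$-cocycle $c(g) = \alpha(g)\cdot 0$. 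Crucially, on $\B_0$ the representation $\pi$ enjoys a uniform spectral gap by $(T_\B)$.

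The hypothesis $\mathfrak{sl}_4(\mathbb{R}) \subseteq \mathfrak{g}$ supplies an embedded maximal parabolic $P = \SL_3(\mathbb{R}) \ltimes \mathbb{R}^3 \hookrightarrow \SL_4(\mathbb{R}) \hookrightarrow G$. The heart of the argument is proving $(F_\B)$ for $P$. Since $\SL_3(\mathbb{R})$ acts on $\mathbb{R}^3$ without nonzero invariant vectors, an averaging of $c|_{\mathbb{R}^3}$ over a finite set of $\SL_3(\mathbb{R})$-translates, combined with the spectral gap from $(T_\B)$ for $\SL_3(\mathbb{R})$ on $\B_0$, forces $c|_{\mathbb{R}^3}$ to be bounded --- this is the Banach analogue of Kazhdan's relative property (T) for $\SL_3(\mathbb{Z}) \ltimes \mathbb{Z}^3$, pushed to a relative $(F_\B)$ statement by exploiting uniform convexity (Chebyshev centers of $\SL_3$-orbits). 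Once $c|_{\mathbb{R}^3}$ is bounded, the $\SL_3(\mathbb{R})$-equivariance of $c$ plus the $(T_\B)$-spectral gap allow one to bound $c|_{\SL_3(\mathbb{R})}$, yielding a $P$-fixed point. To promote from $P$ to $G$: $\SL_4(\mathbb{R})$ is generated by $P$ together with an opposite parabolic $P^-$, and combining their nonempty closed affine fixed-point sets (again via a Chebyshev-center / midpoint argument in $\B$) produces a fixed point for all of $\SL_4(\mathbb{R})$; the Mautner / Howe--Moore principle then extends the fixed point to the ambient $G$. Inheritance of $(F_\B)$ between $G$ and a lattice $\Gamma < G$ is standard (via induced actions in the super-reflexive setting).

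The main obstacle, and the reason the hypothesis strengthens from $\mathfrak{sl}_3(\mathbb{R})$ in \cref{H rank intro thm} to $\mathfrak{sl}_4(\mathbb{R})$ here, is the step establishing boundedness of the cocycle on the unipotent radical $\mathbb{R}^3$ of the parabolic. This is a genuine relative $(F_\B)$ statement, strictly stronger than the relative $(T_\B)$ proved in this paper for the uni-triangular subgroup of $\SL_3$: in the Hilbert setting the analogous gap is bridged by Delorme--Guichardet, but in the Banach setting there is no general implication from spectral gap to bounded cohomology, and one must feed the $\SL_3$-equivariance of the cocycle into the geometry of uniformly convex spaces through a Burger--Shalom-type averaging. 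It is precisely the copy of $\SL_3(\mathbb{R})$ acting nontrivially on an extra abelian summand $\mathbb{R}^3$ --- which is available inside $\SL_4(\mathbb{R})$ but not inside $\SL_3(\mathbb{R})$ alone --- that makes this averaging possible.
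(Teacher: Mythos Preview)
Your outline has the right overall shape --- leverage the rank-three spectral gap inside $\SL_4(\mathbb{R})$, produce a fixed point for some non-compact subgroup, then invoke the Mautner phenomenon --- but the specific route through the parabolic $P = \SL_3(\mathbb{R}) \ltimes \mathbb{R}^3$ is both more complicated than necessary and has a gap at exactly the step you flag as the ``main obstacle''.

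That step, showing $c|_{\mathbb{R}^3}$ is bounded, is a genuine relative $(F_\B)$ statement. You invoke ``Burger--Shalom-type averaging'' without saying what is averaged or why the spectral gap on $\B_0$ closes the argument; in the Banach setting the implication from a spectral gap to a bounded cocycle is precisely what is \emph{not} automatic, and that is the whole content of the theorem. The follow-up claim that the $(T_\B)$-spectral gap then lets you bound $c|_{\SL_3(\mathbb{R})}$ is the same non-implication one level down. (It is also unnecessary even within your framework: once \emph{any} non-compact subgroup of $\SL_4(\mathbb{R})$ has a fixed point, a single application of Theorem~\ref{metric maut thm} finishes, so the detour through $P$, $P^-$ and Chebyshev centers of fixed-point sets can be dropped.)

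The paper sidesteps the obstacle entirely by a different choice of subgroup: instead of the semidirect product $\SL_3(\mathbb{R}) \ltimes \mathbb{R}^3$, it uses the \emph{direct} product $\GL_3(\mathbb{R}) \cong \SL_3(\mathbb{R}) \times \mathbb{R}^*$, embedded block-diagonally in $\SL_4(\mathbb{R})$ as $A \mapsto \operatorname{diag}(A, \det(A)^{-1})$. Now Lemma~\ref{H1 times H2 lemma} (which is \cite[Lemma~5.6]{BFGM}) applies verbatim: if $H_1 \times H_2$ acts affinely on a uniformly convex space and $\pi|_{H_1}$ has no almost invariant vectors, then $H_2$ has a fixed point. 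Howe--Moore (Corollary~\ref{same B decomp coro}) gives $\B^{\pi(\SL_3(\mathbb{R}))} = \{0\}$, Corollary~\ref{Banach prop T for SL_3 R coro} supplies the spectral gap for the $\SL_3(\mathbb{R})$ factor, and hence the commuting $\mathbb{R}^*$ factor has a fixed point. One application of Metric Mautner then gives a fixed point for $\SL_4(\mathbb{R})$, and a second application promotes it to $G$; passage to lattices is Proposition~\ref{prop F_B passes to lattices}. The point is that \emph{commutation} of the two factors is what makes Lemma~\ref{H1 times H2 lemma} work with nothing beyond $(T_\B)$ as input; your semidirect-product route would require a strictly stronger relative statement that the paper never needs to prove.
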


Again, this corroborates a conjecture stated in \cite{BFGM} in which it was conjectured that all higher rank algebraic groups have property $(F_\B)$ for every super-reflexive Banach space $\B$ (see \cite[Conjecture 1.6]{BFGM}).  

The method of proof of Theorem \ref{H rank intro thm 2} is as follows: First,  using ideas of \cite{BFGM},  we show that Banach property $(T_\B)$ for $\SL_{3} (\mathbb{R})$ implies property $(F_\B)$ for $\SL_{4} (\mathbb{R})$.  Second,  via a (now standard) use of Maunter phenomenon,  we deduce Theorem \ref{H rank intro thm 2}. \\

\paragraph{\textbf{Super-expanders}}

A family of finite graphs with uniformly bounded degree is called a \textit{super-expander family} (or a \textit{super-expander}) if it has a Poincar\'{e} inequality with respect to every super-reflexive Banach space (see exact definition in \cref{Super-expanders subsec}). The first examples of super-expanders were constructed by Lafforgue in \cite{Laff2}  as a consequence of his work on strong Banach property (T) for $\SL_3 (F)$, where $F$ is a non-Archimedean local field.  Since Lafforgue's work there have been several constructions of super-expanders using two main techniques: Namely,  the work of Mendel and Naor on non-linear spectral calculus \cite{MendelNaor} which gave a zig-zag construction for super-expanders.  In a different direction,  several works \cite{warpedcones0, warpedcones1, warpedcones2, warpedcones3} gave constructions using warped cones of groups actions arising from groups with Banach property (T).  

It was an open problem to determine whether the Margulis expanders is a super-expander family,  i.e.,  if for a fixed $n \geq 3$ the Cayley graphs of $\SL_n (\mathbb{Z} / i \mathbb{Z})$ form a super-expander family.  This open question appeared in the literature several times,  including in Assaf Naor's Minerva lecture \cite{Minerva} where it was attributed to Margulis and in de la Salle's 2022 ICM lecture \cite[Conjecture 4.4]{dlSICM} (see also  \cite[Problem 5]{Oberw}, \cite{LaatSalle2}, \cite[Remark 5.3]{Mimura2}).  Partial results were achieved by de Laat and de la Salle in \cite{LaatSalle2}, but up until our work, the problem remained open.  As a consequence of our Theorem \ref{T for SL_n R intro thm}, we settle this problem to the affirmative and prove the following:
\begin{theorem}
\label{super-exp intro thm}
Let $n \geq 3$ and $S$ be a finite generating set of $\SL_n (\mathbb{Z})$ (e.g., $S = \lbrace e_{i,j} (\pm 1) : 1 \leq i, j \leq n, i  \neq j \rbrace$).  Also, let $\phi_i : \SL_n (\mathbb{Z}) \rightarrow \SL_n (\mathbb{Z} / i \mathbb{Z})$ be the natural surjective homomorphism for every $i \in \mathbb{N}$.  Then the family of Cayley graphs of $\lbrace (\SL_{n} (\mathbb{Z} / i \mathbb{Z} ),  \phi_i (S)) \rbrace_{i \in \mathbb{N}}$ is a super-expander family.
\end{theorem}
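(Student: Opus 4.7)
The plan is to deduce the super-expander property from Banach property $(T_\mathbb{E})$ of $\SL_n(\mathbb{Z})$ (Theorem \ref{T for SL_n R intro thm}) by a standard-but-delicate conversion of spectral gap into uniform Poincaré inequalities on the Cayley graphs $\mathrm{Cay}(\Gamma_i, \phi_i(S))$ with $\Gamma_i = \SL_n(\mathbb{Z}/i\mathbb{Z})$.

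\textbf{Step 1 (Uniform Kazhdan constant on classes of uniformly super-reflexive spaces).} Fix any modulus-of-convexity function $\delta$ and let $\mathcal{F}_\delta$ denote the class of Banach spaces with modulus of convexity at least $\delta$. I would first show that there is a constant $\kappa = \kappa(S,\delta) > 0$ such that for every $\mathbb{E} \in \mathcal{F}_\delta$ and every continuous isometric representation $\pi: \SL_n(\mathbb{Z}) \to O(\mathbb{E})$ with no non-zero invariant vector, every unit $\xi \in \mathbb{E}$ satisfies $\max_{s \in S} \|\pi(s)\xi - \xi\| \geq \kappa$. If this failed, one could take sequences $\mathbb{E}_k \in \mathcal{F}_\delta$, representations $\pi_k$, and unit vectors $\xi_k$ with $\max_{s \in S}\|\pi_k(s)\xi_k - \xi_k\| \to 0$; an ultraproduct $\mathbb{E}_\omega = \prod_\omega \mathbb{E}_k$ is again a Banach space of modulus $\delta$ (hence super-reflexive), the ultraproduct representation $\pi_\omega$ is isometric, and the class $[\xi_k]$ is a non-zero $\pi_\omega(\SL_n(\mathbb{Z}))$-invariant vector, contradicting Theorem \ref{T for SL_n R intro thm} applied to $\mathbb{E}_\omega$.

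\textbf{Step 2 (Poincaré inequality from the regular representation).} For each $i$, consider the representation of $\SL_n(\mathbb{Z})$ by left translation, through $\phi_i$, on $V_i(\mathbb{E}) := \ell^2(\Gamma_i;\mathbb{E})/\mathbb{E}$ (quotient by the constant functions). This representation has no invariant vectors, and $V_i(\mathbb{E}) \in \mathcal{F}_{\delta'}$ for some $\delta'$ depending only on $\delta$, since finite $\ell^2$-direct sums and quotients by complemented subspaces preserve uniform convexity in a controlled way. Applying the Kazhdan-type inequality from Step 1 to $V_i(\mathbb{E})$ yields, for every $f : \Gamma_i \to \mathbb{E}$ with $\sum_{x} f(x) = 0$,
\begin{equation*}
\|f\|_{\ell^2(\Gamma_i;\mathbb{E})} \;\leq\; \kappa^{-1} \max_{s \in S} \Bigl( \sum_{x \in \Gamma_i} \|f(\phi_i(s)x) - f(x)\|_{\mathbb{E}}^2 \Bigr)^{1/2},
\end{equation*}
which (by the triangle inequality and the bound $|S|$ on the degree) is equivalent to the Poincaré inequality for $\mathrm{Cay}(\Gamma_i, \phi_i(S))$ with respect to $\mathbb{E}$, with a constant depending only on $\delta$, $|S|$, and $n$, but \emph{not} on $i$ or on the particular $\mathbb{E} \in \mathcal{F}_\delta$.

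\textbf{Step 3 (Conclusion) and main obstacle.} Every super-reflexive Banach space lies in some $\mathcal{F}_\delta$ after passage to an equivalent norm, so Step 2 gives the Poincaré constant required in the definition of a super-expander family. The main obstacle is the ultrapower argument in Step 1: one must verify that the ultraproduct of spaces in $\mathcal{F}_\delta$ remains of modulus $\delta$, that ultraproducts of strongly continuous isometric $\SL_n(\mathbb{Z})$-representations are again strongly continuous and isometric on the ultraproduct, and that an almost-invariant sequence produces a genuinely non-zero $\pi_\omega$-invariant class (rather than collapsing to zero in the ultraproduct), so that Theorem \ref{T for SL_n R intro thm} can be applied. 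A secondary technical point is tracking how taking $\ell^2(\Gamma_i; \cdot)$ and quotienting by constants affects the modulus of convexity uniformly in $i$; this is a standard fact but must be stated with care to keep all constants depending only on $\delta$ and $S$.
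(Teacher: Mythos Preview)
Your overall strategy is sound, but Step~1 contains a genuine gap: the ultraproduct argument does not give a contradiction as written. You produce a non-zero $\pi_\omega$-invariant vector $[\xi_k]$ in $\mathbb{E}_\omega$ and say this contradicts Theorem~\ref{T for SL_n R intro thm}. It does not. Property $(T_{\mathbb{E}_\omega})$ says only that the restriction of $\pi_\omega$ to the complement $\mathbb{E}_\omega'$ of the invariant vectors has no almost invariant vectors; it certainly does not forbid $\pi_\omega$ from having invariant vectors. And the fact that each individual $\pi_k$ has no invariant vectors does \emph{not} pass to the ultraproduct (indeed, the whole point of your construction is that it fails to pass). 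So at the end of Step~1 you have an invariant vector in $\mathbb{E}_\omega$ and nothing to contradict. The ``main obstacle'' you flagged (non-collapse, continuity, preservation of the modulus) is actually routine for a discrete group; the real obstacle is the one you did not flag.

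The fix is to replace the ultraproduct by an $\ell^2$-direct sum: set $\mathbb{E} = \bigoplus_k^{\ell^2}\mathbb{E}_k$, which by Day's theorem (Theorem~\ref{l^2 sum of uc spaces thm}) is again uniformly convex, and on which the direct-sum representation $\bigoplus_k \pi_k$ genuinely has no invariant vectors because each summand has none. Property $(T_{\mathbb{E}})$ then gives a single Kazhdan constant for this one representation, which restricts to a uniform constant on each $\mathbb{E}_k$. Once Step~1 is repaired this way, Steps~2 and~3 go through (though it is cleaner to work with the subspace $\ell^2_0(\Gamma_i;\mathbb{E})$ of mean-zero functions rather than the quotient by constants, since subspaces inherit the modulus of convexity directly). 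This corrected argument is exactly what the paper does, packaged as a citation to Lafforgue's Proposition~\ref{prop T imply exp prop}: one forms $\B' = \ell^2\bigl(\bigcup_i \Gamma_i;\B\bigr)$ all at once, notes it is uniformly convex by Theorem~\ref{l^2 sum of uc spaces thm}, and applies Theorem~\ref{T for SL_n R intro thm} to that single space.
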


As noted above,  one can construct super-expanders using warped cones arising from an action of a Banach property (T) group on a compact Riemannian manifold (see  \cite{warpedcones0, warpedcones1, warpedcones2, warpedcones3}).  Combining this machinery with our Theorem \ref{T for SL_n R intro thm} also leads to a construction of super-expanders (see \cref{Super-expanders subsec} for more details):
\begin{theorem}
\label{wraped cones super-exp intro thm}
Let $n \geq 3$ and let $M$ be a compact Riemannian manifold such that $\SL_n (\mathbb{Z})$ acts on $M$ by Lipschitz homeomorphisms.  
For every increasing sequence $\lbrace t_i \rbrace_{i \in \mathbb{N}} \subseteq \mathbb{R}_{>0}$ tending to infinity,   the family $\lbrace (M,  d^{t_i}_{\SL_n (\mathbb{Z})}) \rbrace_{i \in \mathbb{N}}$ is quasi-isometric to a super-expander.
\end{theorem}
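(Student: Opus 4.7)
The statement is essentially a corollary: the two ingredients are (i) Banach property $(T_\B)$ for $\SL_n(\mathbb{Z})$ with respect to every super-reflexive $\B$, which is exactly Theorem \ref{T for SL_n R intro thm}, and (ii) the machinery developed in \cite{warpedcones0, warpedcones1, warpedcones2, warpedcones3} which converts Banach property $(T)$ of a finitely generated group $\Gamma$ acting by Lipschitz homeomorphisms on a compact Riemannian manifold $M$ into a super-expansion statement for the warped cone. So the plan is simply to verify that the hypotheses of one of the warped-cone theorems are met and invoke it.

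More concretely, the first step is to fix a finite symmetric generating set $S$ of $\Gamma = \SL_n(\mathbb{Z})$ and recall the definition of the warped metric $d^t_\Gamma$ on $M$: it is the largest metric bounded above by $t$ times the original Riemannian metric and satisfying $d^t_\Gamma(x, s\cdot x)\le 1$ for all $s\in S$ and $x\in M$. The Lipschitz hypothesis on the action ensures that $(M, d^t_\Gamma)$ is well-defined and, after passing to a maximal $1$-separated subset $X_{t_i}\subset M$, produces a bounded-degree graph whose vertex set is quasi-isometric to $(M, d^{t_i}_\Gamma)$ uniformly in $i$. This is precisely the setting of the warped-cone constructions in \cite{warpedcones1, warpedcones2, warpedcones3}.

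The second step is to apply Theorem \ref{T for SL_n R intro thm} to deduce that $\Gamma$ has $(T_\B)$ for every super-reflexive $\B$, and then feed this into the main theorem of (say) \cite{warpedcones3}, which asserts that if $\Gamma$ has property $(T_\B)$ for some super-reflexive $\B$ and acts by Lipschitz homeomorphisms on a compact Riemannian manifold $M$, then the discretizations of $(M, d^{t_i}_\Gamma)$ for any sequence $t_i\to \infty$ form a family of graphs with a uniform Poincar\'{e} inequality with respect to $\B$. Quantifying over all super-reflexive $\B$ yields that this graph family is a super-expander; since the discretizations are (uniformly in $i$) quasi-isometric to $(M, d^{t_i}_\Gamma)$, the desired conclusion follows.

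Essentially no genuine obstacle remains: all the hard analytic content is absorbed into Theorem \ref{T for SL_n R intro thm} and into the cited warped-cone literature. The only bookkeeping is to match conventions (choice of generating set, scaling of $t_i$, and the discretization procedure) between the warped-cone reference being invoked and the statement above, and to observe that the quasi-isometry constants between $(M, d^{t_i}_\Gamma)$ and any maximal $1$-separated net are independent of $i$ because $\Gamma$ acts by uniformly Lipschitz maps on the compact manifold $M$.
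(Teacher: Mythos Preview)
Your proposal is correct and follows exactly the paper's approach: the paper simply states Sawicki's warped-cone theorem \cite[Theorem 1.1]{warpedcones3} (that property $(T_\B)$ for every super-reflexive $\B$ plus a Lipschitz action on a compact Riemannian manifold yields warped cones quasi-isometric to a super-expander) and combines it with Theorem~\ref{T for SL_n R intro thm}. Your additional remarks about discretization and quasi-isometry constants are more detailed than what the paper records, but the argument is the same.
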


\paragraph{\textbf{Banach property $(FF_\B)$}}

In \cite{Mimura1},  Mimura defined the notion of property $(FF_{\B})$ (see exact definition in  \cref{prop FFB subsec} below) that is a Banach version of property (TT) defined by Monod in \cite{Monod}.  A result of de Laat,  Mimura and de la Salle \cite{LMS} allows one to deduce property $(FF_{\B})$ for the groups $\SL_{n+2} (\mathbb{Z}),  \SL_{n+2} (\mathbb{R})$  from property $(T_{\B})$ for the groups $\SL_n (\mathbb{Z}), \SL_n (\mathbb{R})$.  Thus, we can deduce the following:
\begin{corollary}
\label{FF intro coro}
For every $n \geq 5$ and every super-reflexive Banach space $\B$,  the groups $\SL_n (\mathbb{Z}),  \SL_n (\mathbb{R})$ have property $(FF_{\B})$.
\end{corollary}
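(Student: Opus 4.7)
The plan is to view this corollary purely as a bootstrap from Theorem \ref{T for SL_n R intro thm} via the transference result of de Laat, Mimura and de la Salle in \cite{LMS}, as already advertised in the paragraph preceding the statement. The input to the machinery of \cite{LMS} is property $(T_\B)$ for the smaller group $\SL_{m}$, and the output is property $(FF_\B)$ for $\SL_{m+2}$. Thus the entire task is to verify that the hypotheses of the $\cite{LMS}$ transference are available for every super-reflexive $\B$, and then to turn the crank.

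First I would fix $n \geq 5$ and a super-reflexive Banach space $\B$, and set $m = n - 2 \geq 3$. By Theorem \ref{T for SL_n R intro thm}, the groups $\SL_m(\mathbb{Z})$ and $\SL_m(\mathbb{R})$ have property $(T_\B)$. This is the precise hypothesis needed to feed into the transference theorem from \cite{LMS}, which, roughly, exploits the embedding of $\SL_m$ as a block inside $\SL_{m+2}$ together with the relative property of the two off-diagonal unipotent blocks so as to upgrade linear property $(T_\B)$ for the block to affine-type rigidity (property $(FF_\B)$) for the ambient group.

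Second, I would invoke \cite{LMS} with $\B$ to conclude that $\SL_{n}(\mathbb{Z})$ and $\SL_n(\mathbb{R})$ have property $(FF_\B)$. Since this applies for every super-reflexive $\B$, the corollary follows.

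The only potential obstacle is a bookkeeping one: one needs to confirm that the transference result of \cite{LMS} is stated for the full class of super-reflexive Banach spaces (and not merely for a more restrictive class such as $L^p$ spaces or spaces of nontrivial type), and that it applies equally to the lattice $\SL_n(\mathbb{Z})$ and the ambient Lie group $\SL_n(\mathbb{R})$. Granting this, which is exactly the content of \cite{LMS} as cited, there is no further obstacle, and the corollary is a one-line application of Theorem \ref{T for SL_n R intro thm}.
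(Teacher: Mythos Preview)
Your proposal is correct and matches the paper's approach exactly: the paper states the transference theorem from \cite[Section 5]{LMS} (for $R=\mathbb{Z},\mathbb{R}$ and super-reflexive $\B$, property $(T_\B)$ for $\SL_m(R)$ implies property $(FF_\B)$ for $\SL_{m+2}(R)$) and then combines it with Theorem~\ref{T for SL_n R intro thm}. Your bookkeeping concern is also addressed there, since the cited \cite{LMS} result is stated for all super-reflexive spaces and for both rings.
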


\subsection{Relative Banach property (T) for uni-triangular in $\SL_3 (\mathbb{Z})$} 

Here we will outline the proof of Theorem \ref{T for SL_3 Z intro thm} from which all our other results follow.  The proof relies on a relative Banach property (T) argument that is novel even in the Hilbert setting.    

Generalizing the definition of relative property (T) given in \cite[Theorem 1.2 (b2)]{Jol} to the Banach setting, we will define relative Banach property (T) as follows:
\begin{definition}
\label{relative property T Jol def}
Let $G$ be a topological group and $H<G$ be a subgroup.  For a Banach space $\B$,  we will say that the pair $(G,H)$ has relative Banach property $(T_\B)$ if for every continuous linear isometric representation $\pi : G \rightarrow O (\B)$ and every constant $\gamma >0$,  there is a compact set $K \subseteq G$ and a constant $\varepsilon>0$ such that for every unit vector $\xi \in \B$, if $\sup_{g \in K} \Vert  \pi  (g) \xi - \xi \Vert < \varepsilon$, then there exits $\eta \in \B^{\pi (H)}$ such that $ \Vert  \xi - \eta \Vert < \gamma$. 
\end{definition}

\begin{remark}
This definition is a-priori weaker than definition of weak relative Banach property (T) given in \cite{LMS} and is strictly weaker than the definition of strong relative Banach property (T) given in \cite{BFGM}.
%We note that this definition is weaker than the definition of weak relative Banach property (T) given in \cite{LMS},  but I do not know if they are actually equivalent. This equivalence (or lack of) between the two definition is also unknown in the  Hilbert setting as noted in \cite[Remark after Theorem 1.2]{Jol}.  The definition above is also strictly weaker than the definition of strong relative Banach property (T) given in \cite{BFGM} for the special case where $G = U \rtimes  H$.
\end{remark}

Let $\UT_3 (\mathbb{Z})$ and $\LT_3 (\mathbb{Z})$ denote the subgroups of uni-upper-triangular and uni-lower-triangular matrices in  $\SL_3 (\mathbb{Z})$, i.e., 
$$\UT_3 (\mathbb{Z}) = \left\lbrace \left( \begin{matrix}
1 & a & c \\
0 & 1 & b \\
0& 0 &1 
 \end{matrix} \right) : a,b,c \in \mathbb{Z} \right\rbrace,$$
 and
 $$\LT_3 (\mathbb{Z}) = 
\left\lbrace
\left( \begin{matrix}
1 & 0 & 0 \\
a & 1 & 0 \\
c & b & 1
\end{matrix} \right) : a,b, c \in \mathbb{Z} \right\rbrace.$$
 
We prove the following (with respect to our definition of relative Banach property $(T)$ stated above):
\begin{theorem}
\label{relative prop T - intro thm}
For any super-reflexive Banach space $\B$ the pairs $(\SL_3 (\mathbb{Z}), \UT_3 (\mathbb{Z}))$ and $(\SL_3 (\mathbb{Z}), \LT_3 (\mathbb{Z}))$ both have relative property $(T_\B)$. 
%Moreover,  for any $1 \leq i,k \leq 3,  i \neq k$,  if we denote $H_{i,k} = \langle e_{i,j} (1), e_{j,k} (1),  e_{i,k} (1) \rangle < \SL_3 (\mathbb{Z})$,  then for any function $\delta_0 : (0, 2] \rightarrow (0,1]$,  the pair $(\SL_3 (\mathbb{Z}), H_{i,k})$ has relative property $(T_{\mathcal{E}_{ucus} (\delta_0)})$. 
\end{theorem}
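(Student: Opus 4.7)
The approach is based on two natural affine subgroups $\SL_2(\mathbb{Z}) \ltimes \mathbb{Z}^2 \hookrightarrow \SL_3(\mathbb{Z})$ coming from the upper-left and lower-right $2 \times 2$ blocks: the upper-left block conjugates the pair of root subgroups $\bigl(e_{13}(\mathbb{Z}), e_{23}(\mathbb{Z})\bigr)$ by the standard $\SL_2$-representation (as a direct matrix computation shows), and symmetrically the lower-right block acts on $\bigl(e_{12}(\mathbb{Z}), e_{13}(\mathbb{Z})\bigr)$. The overall plan is to establish a Banach-space version of relative property (T) for each of these affine subgroup pairs, and then combine the two resulting partial-invariance statements using the Heisenberg structure of $\UT_3(\mathbb{Z})$. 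A preliminary quotient reduction (passing to $\B/\B^{\pi(\UT_3(\mathbb{Z}))}$, which remains super-reflexive) is standard and reduces the question to producing uniform bounds.

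Combining the two pieces: given a representation $\pi: \SL_3(\mathbb{Z}) \to O(\B)$ and a unit vector $\xi$ almost invariant under a large compact set $K \subseteq \SL_3(\mathbb{Z})$, two applications of the affine relative property (T) yield vectors $\eta_1, \eta_2 \in \B$ with $\eta_1$ fixed by $e_{13}(\mathbb{Z}) \cdot e_{23}(\mathbb{Z})$, $\eta_2$ fixed by $e_{12}(\mathbb{Z}) \cdot e_{13}(\mathbb{Z})$, and both $\Vert \xi - \eta_1 \Vert$, $\Vert \xi - \eta_2 \Vert$ small. Using the triangle inequality and the normal form $\UT_3(\mathbb{Z}) = e_{12}(\mathbb{Z}) \cdot e_{23}(\mathbb{Z}) \cdot e_{13}(\mathbb{Z})$, one deduces that the whole $\UT_3(\mathbb{Z})$-orbit of $\xi$ has small diameter. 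Since $\UT_3(\mathbb{Z})$ is nilpotent, hence amenable, and $\B$ is uniformly convex, a standard fixed-point argument on the closed convex hull of this orbit produces a $\UT_3(\mathbb{Z})$-fixed vector within $\gamma$ of $\xi$. The statement for $\LT_3(\mathbb{Z})$ follows immediately by applying the same argument after composing with the transpose-inverse automorphism of $\SL_3(\mathbb{Z})$.

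The technical heart, and the place where novelty is needed even in the Hilbert setting, is the proof of relative Banach property (T) for $(\SL_2(\mathbb{Z}) \ltimes \mathbb{Z}^2, \mathbb{Z}^2)$ with respect to every super-reflexive $\B$. The classical Hilbert proof depends on the spectral theorem for the abelian $\mathbb{Z}^2$-action, which has no general Banach analogue. My proposed substitute is a direct quantitative argument using only the modulus of uniform convexity $\delta_\B$: assume for contradiction a unit vector $\xi$ almost invariant under the chosen $\SL_2(\mathbb{Z})$-generators and under generators of $\mathbb{Z}^2$, but lying far from the $\mathbb{Z}^2$-fixed subspace. Then averaging $\xi$ over the orbit of a well-chosen strongly hyperbolic element of $\SL_2(\mathbb{Z})$ (whose action on $\mathbb{Z}^2$ produces exponential expansion along a direction transverse to the $\mathbb{Z}^2$-fixed subspace) should simultaneously be close to $\xi$ by almost-$\SL_2(\mathbb{Z})$-invariance, yet forced to have strictly smaller norm than $\xi$ by $\delta_\B$ applied to the widely-spread $\mathbb{Z}^2$-translates, yielding a contradiction with $\pi$ being isometric.

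Since every bound in the argument depends only on $\delta_\B$ (and on $\gamma$), the compact set $K \subseteq \SL_3(\mathbb{Z})$ and the tolerance $\varepsilon > 0$ produced by the argument are uniform across the entire class of super-reflexive Banach spaces. The main obstacle will be making the uniform-convexity averaging scheme quantitative enough to extract the contradiction from a single, explicit hyperbolic element and a finite set of $\mathbb{Z}^2$-generators, since one cannot afford a spectral decomposition; the key estimate to be engineered is a $\delta_\B$-type lower bound for $\Vert \xi - \pi(s)\xi \Vert$ when $s$ is an $\SL_2(\mathbb{Z})$-conjugate of an $e_{ij}(n)$ with $|n|$ large, in terms of the distance from $\xi$ to the $\mathbb{Z}^2$-fixed subspace.
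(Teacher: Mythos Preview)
Your reduction step---combining relative property $(T_\B)$ for the two copies of $(\SL_2(\mathbb{Z})\ltimes\mathbb{Z}^2,\mathbb{Z}^2)$ inside $\SL_3(\mathbb{Z})$ via the normal form of $\UT_3(\mathbb{Z})$ and a Chebyshev-center/amenability argument---is correct and would indeed yield the theorem \emph{if} relative Banach property $(T_\B)$ for $(\SL_2(\mathbb{Z})\ltimes\mathbb{Z}^2,\mathbb{Z}^2)$ were available for every super-reflexive $\B$. But that is precisely the missing ingredient, and your sketch for it is not a proof. The proposed contradiction (``average over the orbit of a hyperbolic element; almost-invariance keeps the average near $\xi$, while uniform convexity forces its norm strictly below $\Vert\xi\Vert$'') does not go through as stated: almost-invariance under a finite generating set gives no control over $\Vert\pi(g^n)\xi-\xi\Vert$ for a hyperbolic $g$ with $n$ large, and there is no reason the $\mathbb{Z}^2$-translates entering the average are ``widely spread'' in a way that uniform convexity can exploit---a vector far from $\B^{\pi(\mathbb{Z}^2)}$ may still satisfy $\Vert\pi(v)\xi-\xi\Vert$ small for every individual $v$ in a large finite subset of $\mathbb{Z}^2$. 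The ``key estimate to be engineered'' that you flag at the end is the entire content of the problem, and nothing in the sketch indicates how to obtain it without a spectral decomposition.

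The paper's proof takes a completely different route and never passes through $(\SL_2(\mathbb{Z})\ltimes\mathbb{Z}^2,\mathbb{Z}^2)$. Instead it works directly with the Heisenberg copies $H_{i,k}=\langle e_{i,j},e_{j,k},e_{i,k}\rangle$ inside $\SL_3(\mathbb{Z})$ and constructs an explicit sequence of finitely supported probability measures $h_d\in\Prob_c(\SL_3(\mathbb{Z}))$ (products of dyadic averages $X_{i,k}^d=2^{-d}\sum_{a=0}^{2^d-1}e_{i,k}(a)$ in a carefully chosen order and with carefully chosen exponents) which is shown to be Cauchy in the norm $\Vert\cdot\Vert_{\mathcal{U}(\mathcal{E}_{uc}(\delta_0))}$. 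The Cauchy estimate is obtained by a sequence of ``switch moves'' and ``up/down moves'' whose cost is bounded using only the Heisenberg commutation relations and the modulus of convexity (Corollary~\ref{uc ineq coro}); the limit operator $\pi(f)$ then automatically has image in $\B^{\pi(\UT_3(\mathbb{Z}))}$. This establishes the stronger relative property $(T^{\proj}_{\mathcal{E}_{uc}(\delta_0)})$, from which relative $(T_\B)$ follows. The mechanism that makes uniform convexity bite is Lemma~\ref{product inequality lemma}, which converts the Heisenberg relation $y^{-m}xy^m=xz^m$ into a genuine norm contraction for the operator $\pi\bigl(\tfrac{e+xz^k}{2}\cdot\tfrac{e+y^m}{2}\bigr)$ whenever $\Vert\pi(e-z^m)\zeta\Vert$ is bounded below---this is the concrete replacement for the spectral argument, and it has no analogue in your outline.
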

The proof of this theorem is via defining a sequence of finitely supported probability measures $f_n$ on $\SL_3 (\mathbb{Z})$ and showing that for every super-reflexive Banach space $\B$ and every continuous isometric representation $\pi : \SL_3 (\mathbb{Z}) \rightarrow O (\B)$ it holds that $\pi (f_n)$ converges in the norm topology and that the image of the limit operator is in $\B^{\pi (\UT_3 (\mathbb{Z}))}$.  We note that this method is new even in the classical Hilbert setting.  Moreover, the proof is completely elementary in contrast with the more classical proofs of relative property (T), e.g.,  the proof that $(\SL_2 (\mathbb{Z}) \ltimes \mathbb{Z}^2,  \mathbb{Z}^2)$ has relative property (T) in \cite[Theorem 4.2.2]{PropTBook} requires Fourier analysis and projections valued measures while our proof requires neither.

Combining Theorem \ref{relative prop T - intro thm} with a bounded generation argument \`{a} la Shalom implies Theorem \ref{T for SL_3 Z intro thm}.  \\

\paragraph{\textbf{Structure of this paper}}
This paper is organized as follows: In \cref{prelim sec}, we cover some needed preliminaries. In \cref{Banach prop T sec},  we gather some facts regarding  Banach property (T) (and relative versions of Banach property (T)).  In \cref{Bounded generation and Banach property (T) sec},  we show how bounded generation and relative Banach property (T) imply Banach property (T).  In \cref{Averaging operations sec},  we prove some bounds on the norms of averaging operations for the Heisenberg group that are needed for our relative Banach property (T) result.  In \cref{Relative Banach property sec},  we prove our relative Banach property (T) result (Theorem \ref{relative prop T - intro thm}).  In \cref{SL sec}, we prove Banach property (T) for $\SL_{3} (\mathbb{Z})$ (Theorem \ref{T for SL_3 Z intro thm}).   In \cref{Banach property $(T)$ for simple Lie groups},  we prove Banach property (T) for a large family of simple Lie groups and their lattices (Theorem \ref{H rank intro thm}).  Last,  in \cref{app sec},  we prove the applications stated above.  \\

\paragraph{\textbf{Acknowledgements}} 
I thank Uri Bader and Mikael de la Salle for reading an early draft of this paper and making several valuable suggestions that vastly improved it.  I also thank the anonymous referee of this paper for many corrections and suggestions that improved the correctness and readability of this paper.

\section{Preliminaries}

\label{prelim sec}

\subsection{Uniformly convexity}
\label{Uniformly convexity subsec}

Below,  we will state some needed facts regarding uniformly convex spaces.  

\begin{proposition}
\label{uc ineq prop}
Let $\B$ be a uniformly convex Banach space with a modulus of convexity $\delta :  (0,2] \rightarrow (0,1]$ and denote $O(\B)$ to be the group of invertible linear isometries of $\B$.  Then for every $0 < \varepsilon \leq 2$, every $S,T  \in O(\B)$ such that $TS = ST$ and every $\xi \in \B$,  if $\Vert (I-S) \xi \Vert \geq \varepsilon \Vert \xi \Vert$,  then
$$\frac{1}{2} \left\Vert \frac{I+T}{2}  \xi \right\Vert + \frac{1}{2} \left\Vert \frac{I+TS}{2} \xi \right\Vert \leq \max \left\lbrace  1-\frac{1}{2} \delta (\delta (\varepsilon)),  1- \frac{1}{4} \delta (\varepsilon) \right\rbrace \Vert \xi \Vert.$$
%\begin{enumerate}
%\item For every $T \in O(\B)$ and every $v \in \B$, 
%$$\frac{1}{2} \left\Vert \frac{I+T}{2} v \right\Vert + \frac{1}{2} \left\Vert \frac{I-T}{2} v \right\Vert \leq \max \lbrace \frac{3}{4},  1-\frac{1}{2} \delta (1) \rbrace \Vert v \Vert. $$
%\item 

%\end{enumerate}
\end{proposition}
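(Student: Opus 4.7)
The plan is to reduce to the case $\|\xi\|=1$ by homogeneity, set $\alpha := \|\tfrac{I+T}{2}\xi\|$ and $\beta := \|\tfrac{I+TS}{2}\xi\|$, and split on whether $T$ moves $\xi$ by at least $\delta(\varepsilon)$ or not. Since $T, S \in O(\B)$, both $T\xi$ and $TS\xi$ are unit vectors, so the triangle inequality gives the trivial bounds $\alpha, \beta \leq 1$ at the outset; the two entries in the max on the right-hand side should come out of the two halves of the dichotomy.

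In the first case, $\|\xi - T\xi\| \geq \delta(\varepsilon)$, uniform convexity applied to the unit vectors $\xi, T\xi$ with tolerance $\delta(\varepsilon)$ yields $\alpha \leq 1 - \delta(\delta(\varepsilon))$. Combining with the trivial bound $\beta \leq 1$ gives $\tfrac{1}{2}(\alpha+\beta) \leq 1 - \tfrac{1}{2}\delta(\delta(\varepsilon))$, matching the first entry in the maximum.

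In the second case, $\|\xi - T\xi\| < \delta(\varepsilon)$, the commutation hypothesis $TS = ST$ becomes essential. I would rewrite $TS\xi = ST\xi = S\xi + S(T\xi - \xi)$, so that
\[
\xi + TS\xi = (\xi + S\xi) + S(T\xi - \xi).
\]
Uniform convexity applied to the unit vectors $\xi, S\xi$ with $\|(I - S)\xi\| \geq \varepsilon$ gives $\|\xi + S\xi\| \leq 2(1 - \delta(\varepsilon))$, while $S$ being an isometry ensures the remainder $S(T\xi - \xi)$ has norm smaller than $\delta(\varepsilon)$. The triangle inequality then produces $\|\xi + TS\xi\| \leq 2 - \delta(\varepsilon)$, i.e.\ $\beta \leq 1 - \tfrac{1}{2}\delta(\varepsilon)$, and together with $\alpha \leq 1$ this gives $\tfrac{1}{2}(\alpha+\beta) \leq 1 - \tfrac{1}{4}\delta(\varepsilon)$, the second entry in the maximum.

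The step I expect to require the most care is the second case: one has to notice that the only way to exploit $\|(I - S)\xi\| \geq \varepsilon$ when $T\xi$ is close to $\xi$ is to push $S$ to the outside via $TS = ST$, which rewrites $\xi + TS\xi$ as a small perturbation of $\xi + S\xi$. The threshold $\delta(\varepsilon)$ for the dichotomy is chosen exactly so that the error absorbed from $\|T\xi - \xi\|$ is dominated by the gain $\delta(\varepsilon)$ obtained from uniform convexity on $(\xi, S\xi)$, which is what makes the two case bounds assemble cleanly into the stated maximum without any further optimization being needed.
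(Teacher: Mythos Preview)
Your proposal is correct and is essentially the same argument as the paper's: both split on whether $\Vert (I-T)\xi\Vert$ exceeds $\delta(\varepsilon)$, use uniform convexity on $(\xi,T\xi)$ in the first case, and in the second case rewrite $\xi+TS\xi=(\xi+S\xi)+S(T\xi-\xi)$ via $TS=ST$ to apply uniform convexity on $(\xi,S\xi)$ and absorb the small perturbation. The only cosmetic difference is that you normalize $\Vert\xi\Vert=1$ at the start while the paper carries the factor $\Vert\xi\Vert$ throughout.
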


This proposition is probably well-known and we give the proof for completeness:

\begin{proof}
%For the first inequality,  fix $v \in \B$.  If $\Vert \frac{I-T}{2} v \Vert \leq \frac{1}{2}$, then 
%\begin{align*}
%\frac{1}{2} \left\Vert \frac{I+T}{2} v \right\Vert + \frac{1}{2} \left\Vert \frac{I-T}{2} v \right\Vert \leq 
%\frac{1}{2} \Vert v \Vert + \frac{1}{4} \Vert v \Vert = \frac{3}{4} \Vert v \Vert,
%\end{align*}
%as needed.  Otherwise,  $\Vert (I-T) v \Vert > 1$ and 
%\begin{align*}
%\frac{1}{2} \left\Vert \frac{I+T}{2} v \right\Vert + \frac{1}{2} \left\Vert \frac{I-T}{2} v \right\Vert \leq 
%\frac{1}{2} (1-\delta (1)) \Vert v \Vert + \frac{1}{2} \Vert v \Vert = (1- \frac{1}{2} \delta (1)) \Vert v \Vert.
%\end{align*}

%For the second inequality,  

Fix $\xi \in \B$ and $0 < \varepsilon \leq 2$,  and assume that $\Vert (I-S) \xi \Vert \geq \varepsilon \Vert \xi \Vert$.  If $\Vert (I-T) \xi \Vert \geq \delta (\varepsilon)$, then 
\begin{align*}
\frac{1}{2} \left\Vert \frac{I+T}{2}  \xi \right\Vert + \frac{1}{2} \left\Vert \frac{I+TS}{2} \xi \right\Vert \leq 
\frac{1}{2} \left(1- \delta (\delta (\varepsilon)) \right) \Vert \xi \Vert + \frac{1}{2} \Vert \xi \Vert = \left(1-\frac{1}{2} \delta (\delta (\varepsilon)) \right) \Vert \xi \Vert,
\end{align*}
as needed. 

Otherwise,  $\Vert \frac{I-T}{2} \xi \Vert \leq \frac{\delta (\varepsilon)}{2}$ and 
\begin{align*}
\frac{1}{2} \left\Vert \frac{I+T}{2}  \xi \right\Vert + \frac{1}{2} \left\Vert \frac{I+TS}{2} \xi \right\Vert \leq 
\frac{1}{2}  \Vert \xi \Vert + \frac{1}{2} \left\Vert \frac{I+S}{2} \xi \right\Vert +  \frac{1}{2} \left\Vert \frac{S(T-I)}{2} \xi \right\Vert \leq \\
\frac{1}{2}  \Vert \xi \Vert + \frac{1}{2} (1- \delta (\varepsilon)) \Vert \xi \Vert + \frac{1}{4} \delta (\varepsilon) \Vert \xi \Vert = \left(1- \frac{1}{4} \delta (\varepsilon) \right) \Vert \xi \Vert.
\end{align*}

\end{proof}

We will be interested in classes of uniformly convex Banach spaces defined as follows: Let $\delta_0 : (0,2] \rightarrow (0,1]$ be a monotone increasing function. Denote $\mathcal{E}_{us} (\delta_0)$ to be the class of all uniformly convex Banach spaces $\B$ with such that the  modulus of convexity of $\B$ is bounded by $\delta_0$, i.e., for a uniformly convex Banach space $\B$ with a modulus of convexity $\delta : (0,2] \rightarrow (0,1]$ it holds that $\B \in \mathcal{E}_{uc} (\delta_0)$ if and only if for every $0<\varepsilon \leq 2$ it holds that $ \delta (\varepsilon)\geq \delta_0 (\varepsilon)$.   For these classes of Banach space, we state the following immediate corollary of Proposition \ref{uc ineq prop}:

\begin{corollary}
\label{uc ineq coro}
Let $\delta_0 : (0,2] \rightarrow (0,1]$ be a function and $0< \varepsilon \leq 2$ be a constant. There is $r_0 = r_0 (\delta_0, \varepsilon)$,  $0 \leq r_0 < 1$ such that for every $\B \in \mathcal{E}_{uc} (\delta_0)$ and every two commuting operators $S,T \in O (\B)$ it holds for every $\xi \in \B$ that if $\Vert (I-S) \xi \Vert \geq \varepsilon \Vert \xi \Vert$,  then
$$\frac{1}{2} \left\Vert \frac{I+T}{2}  \xi \right\Vert + \frac{1}{2} \left\Vert \frac{I+TS}{2} \xi \right\Vert \leq r_0 \Vert \xi \Vert.$$
%$$\frac{1}{2} \left\Vert \frac{I+T}{2}  \xi \right\Vert + \frac{1}{2} \left\Vert \frac{I+TS^{-1}}{2} \xi \right\Vert \leq r_0 \Vert \xi \Vert.$$
\end{corollary}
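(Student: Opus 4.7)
The plan is to apply Proposition \ref{uc ineq prop} directly and then replace the space-dependent modulus $\delta$ by the uniform lower bound $\delta_0$, using monotonicity of $\delta_0$ to control the composition $\delta(\delta(\varepsilon))$.

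Let $\mathcal{B} \in \mathcal{E}_{uc}(\delta_0)$ with modulus of convexity $\delta$, and let $S,T \in O(\mathcal{B})$ commute with $\Vert (I-S)\xi \Vert \geq \varepsilon \Vert \xi \Vert$. Proposition \ref{uc ineq prop} gives
\[
\frac{1}{2}\left\Vert \frac{I+T}{2}\xi \right\Vert + \frac{1}{2}\left\Vert \frac{I+TS}{2}\xi \right\Vert \leq \max\!\left\lbrace 1 - \tfrac{1}{2}\delta(\delta(\varepsilon)),\ 1 - \tfrac{1}{4}\delta(\varepsilon) \right\rbrace \Vert \xi \Vert.
\]
Since $\delta(\varepsilon) \geq \delta_0(\varepsilon)$ pointwise, the second term inside the max is at most $1 - \tfrac{1}{4}\delta_0(\varepsilon)$. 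For the first term, monotonicity of $\delta_0$ and the inequality $\delta(\varepsilon) \geq \delta_0(\varepsilon)$ yield $\delta_0(\delta(\varepsilon)) \geq \delta_0(\delta_0(\varepsilon))$, and then $\delta(\delta(\varepsilon)) \geq \delta_0(\delta(\varepsilon)) \geq \delta_0(\delta_0(\varepsilon))$, so
\[
1 - \tfrac{1}{2}\delta(\delta(\varepsilon)) \leq 1 - \tfrac{1}{2}\delta_0(\delta_0(\varepsilon)).
\]
Note $\delta_0(\varepsilon) \in (0,1] \subseteq (0,2]$, so the composition $\delta_0(\delta_0(\varepsilon))$ is well-defined.

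I would therefore take
\[
r_0 = r_0(\delta_0,\varepsilon) := \max\!\left\lbrace 1 - \tfrac{1}{2}\delta_0(\delta_0(\varepsilon)),\ 1 - \tfrac{1}{4}\delta_0(\varepsilon) \right\rbrace,
\]
which is manifestly a function only of $\delta_0$ and $\varepsilon$. Since $\delta_0$ takes values in $(0,1]$, both expressions inside the max are strictly less than $1$, hence $r_0 < 1$; and they are at most $1$, so $0 \leq r_0$. The desired inequality then follows from the display above.

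There is no genuine obstacle here — the whole content is the monotonicity trick that lets $\delta(\delta(\varepsilon))$ be bounded below by $\delta_0(\delta_0(\varepsilon))$. The only thing to be careful about is that monotonicity of $\delta_0$ (not of the space-dependent $\delta$, which need not be monotone a priori) is what makes the argument go through; this is precisely why the hypothesis of the corollary requires $\delta_0$ to be monotone increasing.
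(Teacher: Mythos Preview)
Your proof is correct and is exactly the argument the paper has in mind when it calls this an ``immediate corollary'' of Proposition~\ref{uc ineq prop}: apply the proposition and then replace $\delta$ by the uniform lower bound $\delta_0$. Your observation that the monotonicity of $\delta_0$ (which is a standing assumption in the paper's definition of $\mathcal{E}_{uc}(\delta_0)$, though not restated in the corollary) is what allows the bound $\delta(\delta(\varepsilon)) \geq \delta_0(\delta_0(\varepsilon))$ is a useful clarification.
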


%\begin{theorem}
%\label{l^2 sum of uc spaces thm}
%\cite[Corollary 4.9]{Zac}
%Let $\delta_0 : (0,2] \rightarrow (0,1]$ be a function. There is a function $\delta_0 ' :  (0,2] \rightarrow (0,1]$
% that every finite or infinite $\ell^2$ sum of spaces from $\mathcal{E}_{ucus} (\delta_0)$ is contained in $\mathcal{E}_{ucus} (\delta_0 ')$.
%\end{theorem}

\begin{comment}
\begin{proposition}
\label{passing to quot prop}
Let $\delta_0 : (0,2] \rightarrow (0,1]$ be a function.  The class $ \mathcal{E}_{uc} (\delta_0)$ is closed under passing to closed subspaces.
\end{proposition}

\begin{proof}
Let $\B \in  \mathcal{E}_{uc} (\delta_0)$ and $\B ' \subseteq \B$ a closed subspace.  Denote by $\delta, \delta^*$ the moduli of convexity of $\B,  \B^*$.  
It is immediate that $\left. \delta \right\vert_{\B '} \leq \delta$.  Also,  recall that $(\B ')^*$ is isometrically isomorphic to $\B^* / (\B ')^\perp$.  Note that the modulus of convexity does not increase when passing to quotients and thus we conclude that $\B ' \in \mathcal{E}_{uc} (\delta_0)$.
\end{proof}
\end{comment}

We will need the following theorems:
\begin{theorem}\cite[Theorem 3]{Day}
\label{l^2 sum of uc spaces thm}
Let $\delta_0 : (0,2] \rightarrow (0,1]$ be a function and $\lbrace \B_n \rbrace_{n \in \mathbb{N}} \subseteq \mathcal{E}_{uc} (\delta_0)$ a sequence.  Then the $\ell^2$-sum $\bigoplus_{n} \B_n$ is a uniformly convex Banach space.
\end{theorem}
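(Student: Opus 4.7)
The plan is to show directly that $\B := \bigoplus_n \B_n$ is uniformly convex with a modulus $\delta$ depending only on $\delta_0$. Fix $\varepsilon \in (0,2]$ and take unit vectors $\xi = (\xi_n)_n, \eta = (\eta_n)_n \in \B$ with $\Vert \xi - \eta \Vert \geq \varepsilon$. Set $a_n := \Vert \xi_n \Vert$, $b_n := \Vert \eta_n \Vert$, $c_n := \Vert \tfrac{\xi_n + \eta_n}{2} \Vert$, and $d_n := \Vert \xi_n - \eta_n \Vert$, so the hypotheses read $\sum_n a_n^2 = \sum_n b_n^2 = 1$ and $\sum_n d_n^2 \geq \varepsilon^2$. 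The goal is to bound $\sum_n c_n^2$ uniformly below $1$. My strategy is a dichotomy based on the $\ell^2$-separation of the scalar sequences $(a_n)$ and $(b_n)$.

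First case: $\sum_n (a_n - b_n)^2 \geq \varepsilon^2/2$. From $\sum a_n^2 = \sum b_n^2 = 1$ one gets the identity $\sum_n (a_n + b_n)^2/4 = 1 - \tfrac{1}{4}\sum_n (a_n - b_n)^2$, which together with the triangle inequality $c_n \leq (a_n + b_n)/2$ yields $\sum_n c_n^2 \leq 1 - \varepsilon^2/8$, hence $\Vert \tfrac{\xi+\eta}{2} \Vert \leq \sqrt{1 - \varepsilon^2/8}$. This case uses no information about the summands beyond the triangle inequality.

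Second case: $\sum_n (a_n - b_n)^2 < \varepsilon^2/2$. Since $|a_n - b_n| \leq d_n$, one has $\sum_n (d_n^2 - (a_n - b_n)^2) \geq \varepsilon^2/2$, so the ``directional'' part of the difference is forced to be large. For indices with $a_n, b_n > 0$, set $\tilde\xi_n := \xi_n/a_n$ and $\tilde\eta_n := \eta_n/b_n$, and use the identity $\xi_n - \eta_n = \min(a_n, b_n)(\tilde\xi_n - \tilde\eta_n) + (a_n - b_n)\tilde\xi_n$ (WLOG $a_n \geq b_n$) to conclude $\Vert \tilde\xi_n - \tilde\eta_n \Vert \geq (d_n - |a_n-b_n|)/\min(a_n, b_n)$. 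Pick a threshold $\alpha > 0$ and partition the relevant indices into $B_\alpha = \{n : \Vert \tilde\xi_n - \tilde\eta_n \Vert \geq \alpha\}$ and its complement; on $B_\alpha$, uniform convexity of $\B_n$ with common modulus $\delta_0$ gives $\Vert \tfrac{\tilde\xi_n + \tilde\eta_n}{2} \Vert \leq 1 - \delta_0(\alpha)$. Writing $\tfrac{\xi_n + \eta_n}{2} = \tfrac{a_n + b_n}{2} \cdot \tfrac{\tilde\xi_n + \tilde\eta_n}{2} + \tfrac{a_n - b_n}{4}(\tilde\xi_n - \tilde\eta_n)$ and then taking norms, squaring, and applying an AM-GM style absorption to the cross term, yields a per-coordinate bound roughly of the form $c_n^2 \leq \tfrac{(a_n+b_n)^2}{4}(1 - \rho(\alpha)) + K(\alpha)(a_n - b_n)^2$, with $\rho(\alpha) > 0$ depending only on $\delta_0$ and $K(\alpha)$ a constant depending on $\alpha$ and $\delta_0$. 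Summing this estimate over $B_\alpha$, adding the trivial bound $c_n^2 \leq (a_n+b_n)^2/4$ on the complement, and using both the lower bound on $\sum_{B_\alpha}(a_n+b_n)^2$ (forced by the directional-difference mass being concentrated on $B_\alpha$) and the hypothesis $\sum(a_n-b_n)^2 < \varepsilon^2/2$ to control the error term, gives $\sum_n c_n^2 \leq 1 - \delta''(\varepsilon)$ for some $\delta''(\varepsilon) > 0$ depending only on $\varepsilon$ and $\delta_0$.

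Taking $\delta(\varepsilon) := \min\bigl(1 - \sqrt{1 - \varepsilon^2/8},\, \delta''(\varepsilon)\bigr)$ finishes the proof. The main obstacle is the second case: producing a per-coordinate estimate that couples the modulus $\delta_0$ of each summand with the $\ell^2$ summation structure while correctly handling possibly unequal norms $a_n \neq b_n$ and the mixed cross-term arising upon squaring. The threshold $\alpha$ must be chosen so that the improvement factor $\rho(\alpha)$ beats both the error constant $K(\alpha)$ times $\varepsilon^2/2$ and the trivial bound on the complement of $B_\alpha$ --- this is the step where the hypothesis of a \emph{common} modulus $\delta_0$ is essential, since the deficit $\rho(\alpha)$ must not degenerate as $n$ varies.
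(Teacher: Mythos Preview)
The paper does not prove this statement; it is quoted as \cite[Theorem 3]{Day} and used as a black box. So there is no ``paper's proof'' to compare against, and your outline is in fact close to Day's original argument: split according to whether the scalar norm-sequences $(a_n),(b_n)$ are far apart in $\ell^2$, and in the complementary case exploit the common modulus $\delta_0$ coordinatewise.

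There is, however, a genuine quantitative gap in your second case. With the case split at $\sum_n (a_n-b_n)^2 \geq \varepsilon^2/2$, the per-coordinate bound you propose cannot close. After your decomposition one has $c_n \leq \tfrac{a_n+b_n}{2}(1-\delta_0(\alpha)) + \tfrac{|a_n-b_n|}{2}$, and any AM--GM absorption $(x+y)^2 \leq (1+t)x^2 + (1+1/t)y^2$ forces $K(\alpha) = \tfrac{1+1/t}{4}$ while $\rho(\alpha) = 1-(1+t)(1-\delta_0(\alpha))^2$; to keep $\rho(\alpha)>0$ one needs $t$ of order $\delta_0(\alpha)$, which makes $K(\alpha)$ of order $1/\delta_0(\alpha)$. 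The gain term is then at most $\rho(\alpha)\cdot \tfrac{1}{4}\sum_{B_\alpha}(a_n+b_n)^2 \leq \rho(\alpha) \leq 1$, while the error term $K(\alpha)\cdot \varepsilon^2/2$ is of order $\varepsilon^2/\delta_0(\alpha)$, and the required inequality $\rho(\alpha) \cdot (\text{mass on }B_\alpha) > K(\alpha)\cdot \varepsilon^2/2$ is impossible for small $\delta_0(\alpha)$ regardless of how you choose $\alpha$.

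Two clean fixes are available. Either (i) move the case-split threshold: replace $\varepsilon^2/2$ by a parameter $\theta = \theta(\varepsilon,\delta_0)$ chosen \emph{after} $\alpha$, small enough that $K(\alpha)\theta$ is dominated by the gain; then Case~1 still gives $\sum c_n^2 \leq 1-\theta/4$. Or (ii) use a sharper per-coordinate estimate that avoids the large $K(\alpha)$: from $\xi_n+\eta_n = \min(a_n,b_n)(\tilde\xi_n+\tilde\eta_n) + |a_n-b_n|\,\tilde\xi_n$ (say $a_n\geq b_n$) one gets $c_n \leq \tfrac{a_n+b_n}{2} - \min(a_n,b_n)\delta_0(\alpha)$, hence $c_n^2 \leq \tfrac{(a_n+b_n)^2}{4} - \tfrac{\delta_0(\alpha)}{2}a_nb_n$. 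Summing and using $d_n^2-(a_n-b_n)^2 \leq 4a_nb_n$ to lower-bound $\sum_{B_\alpha} a_nb_n$ by $(\varepsilon^2/2-2\alpha)/4$, the choice $\alpha=\varepsilon^2/8$ already gives $\sum_n c_n^2 \leq 1 - \delta_0(\varepsilon^2/8)\,\varepsilon^2/32$, with no error term to absorb. Either route repairs your argument; the second keeps your original dichotomy intact.
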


\begin{comment}
\begin{corollary}
\label{finite l^2 sums of a uc space coro}
Let $\B$ be a uniformly convex Banach space.  There is $\delta_0  : (0,2] \rightarrow (0,1]$ such that every finite $\ell^2$ sum of copies of $\B$ is in $\mathcal{E}_{uc} (\delta_0 )$.
\end{corollary}

\begin{proof}
Take $\B_n = \B$ for every $n \in \mathbb{N}$.  By Theorem \ref{l^2 sum of uc spaces thm},  the infinite $\ell^2$ sum $\bigoplus_{n} \B_n =\bigoplus_{n} \B$ is a uniformly convex Banach space with some modulus of convexity function $\delta_0$.  Every finite $\ell^2$ sum is a subspace of $\bigoplus_{n} \B_n$ and thus every finite $\ell^2$ is in $\mathcal{E}_{uc} (\delta_0 )$.
\end{proof}
\end{comment}

\begin{theorem}
\label{L^2 sum of uc spaces thm}
\cite[Theorem 1.e.9]{BSpacesBook}
Let $\delta_0 : (0,2] \rightarrow (0,1]$ be a function. There is a function $\delta_0 ' :  (0,2] \rightarrow (0,1]$
 that for every finite measure space $(X, \mu)$ and every $\B \in  \mathcal{E}_{uc} (\delta_0)$ it holds that $L^2 (X, \mu ; \B) \in \mathcal{E}_{uc} (\delta_0 ')$.  
%In particular, for every $\B \in \mathcal{E}_{uc} (\delta_0)$,  every finite $\ell^2$-sum of copies of $\B$ is in $\mathcal{E}_{ucus} (\delta_0 ')$.
\end{theorem}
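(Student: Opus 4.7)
The plan is to prove uniform convexity of $L^2(X, \mu; \B)$ by combining pointwise uniform convexity of $\B$ with the parallelogram identity in the scalar Hilbert space $L^2(X, \mu; \mathbb{R})$. Fix $f, g \in L^2(X, \mu; \B)$ with $\Vert f \Vert_2 = \Vert g \Vert_2 = 1$ and $\Vert f-g \Vert_2 \geq \varepsilon$; the goal is to produce $\delta_0'(\varepsilon) > 0$ depending only on $\delta_0$ and $\varepsilon$ such that $\left\Vert \frac{f+g}{2} \right\Vert_2 \leq 1 - \delta_0'(\varepsilon)$.

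First, by the pointwise triangle inequality $\left\Vert \frac{f(x)+g(x)}{2} \right\Vert \leq \frac{\Vert f(x) \Vert + \Vert g(x) \Vert}{2}$ combined with the parallelogram identity in the scalar Hilbert space $L^2(X, \mathbb{R})$ applied to the functions $\Vert f(\cdot) \Vert$ and $\Vert g(\cdot) \Vert$, I obtain
\begin{equation*}
\left\Vert \tfrac{f+g}{2} \right\Vert_2^2 \leq \tfrac{\Vert f \Vert_2^2 + \Vert g \Vert_2^2}{2} - \left\Vert \tfrac{\Vert f(\cdot) \Vert - \Vert g(\cdot) \Vert}{2} \right\Vert_{L^2(X)}^2 = 1 - \left\Vert \tfrac{\Vert f \Vert - \Vert g \Vert}{2} \right\Vert_{L^2(X)}^2.
\end{equation*}
So if the scalar $L^2$-distance $\Vert \Vert f(\cdot) \Vert - \Vert g(\cdot) \Vert \Vert_{L^2(X)}$ exceeds some threshold $\theta(\varepsilon) > 0$ to be chosen, we are already done.

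Otherwise $\Vert f(\cdot) \Vert$ and $\Vert g(\cdot) \Vert$ are close in $L^2(X)$, so most of the $L^2$-distance between $f$ and $g$ comes from $f(x)$ and $g(x)$ pointing in different directions rather than having substantially different magnitudes. I would then partition $X = A \cup B$ with $A = \{ x : \Vert f(x) - g(x) \Vert \geq \eta \max(\Vert f(x) \Vert, \Vert g(x) \Vert) \}$ for $\eta > 0$ to be chosen in terms of $\varepsilon$. On $B$ the contribution to $\Vert f-g \Vert_2^2$ is at most $\eta^2 \int_B \max(\Vert f \Vert, \Vert g \Vert)^2 \, d\mu$, and $\int_X \max(\Vert f \Vert, \Vert g \Vert)^2 \, d\mu$ is bounded by something close to $1$ precisely because we are in the regime where $\Vert f(\cdot) \Vert$ and $\Vert g(\cdot) \Vert$ are close in $L^2$. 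Choosing $\eta$ so this $B$-contribution is at most $\varepsilon^2/2$ forces the $A$-contribution to $\Vert f-g \Vert_2^2$ to be at least $\varepsilon^2/2$, which in turn forces a positive lower bound on $\int_A \max(\Vert f \Vert, \Vert g \Vert)^2 \, d\mu$. On $A$, pointwise uniform convexity of $\B$ gives $\left\Vert \frac{f(x)+g(x)}{2} \right\Vert \leq (1 - \delta_0(\eta)) \max(\Vert f(x) \Vert, \Vert g(x) \Vert)$, and squaring and integrating this against the lower bound on $\int_A \max(\Vert f \Vert, \Vert g \Vert)^2 \, d\mu$ yields the required improvement over the trivial bound $\left\Vert \frac{f+g}{2} \right\Vert_2 \leq 1$.

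The main obstacle I expect is the technical reconciliation of the $L^\infty$-type pointwise bound on $A$ (involving $\max(\Vert f(x) \Vert, \Vert g(x) \Vert)$) with the $L^2$-type averaging in the Bochner norm, since in general $\max(\Vert f \Vert, \Vert g \Vert)^2$ and $\frac{\Vert f \Vert^2 + \Vert g \Vert^2}{2}$ differ by a factor of up to $2$. The case split above is what resolves this: when $\Vert f(\cdot) \Vert$ and $\Vert g(\cdot) \Vert$ differ substantially in $L^2$, the scalar parallelogram estimate closes the argument; when they are close in $L^2$, the maximum is comparable to the quadratic mean on a set of full contribution, so the squared pointwise uniform convexity inequality integrates cleanly. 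Combining the two regimes produces an explicit $\delta_0'$ depending only on $\delta_0$ and $\varepsilon$.
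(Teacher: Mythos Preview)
The paper does not give its own proof of this theorem; it simply cites \cite[Theorem 1.e.9]{BSpacesBook}. So there is no in-paper argument to compare against.

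Your sketch follows the classical route (essentially Day's method for $\ell^p$-sums, adapted to Bochner $L^2$) and is correct in outline. Two remarks on the details. First, your pointwise bound on $A$, namely $\left\Vert \tfrac{f(x)+g(x)}{2} \right\Vert \leq (1-\delta_0(\eta))\max(\Vert f(x)\Vert,\Vert g(x)\Vert)$, requires the version of the modulus of convexity defined with $\Vert\xi\Vert,\Vert\eta\Vert \leq 1$ rather than $=1$; the two definitions agree, but since the paper uses the equality version you should note this. Second, the ``reconciliation'' step you flag is genuinely the crux, and your one-sentence resolution is too compressed: closeness of $\Vert f(\cdot)\Vert$ and $\Vert g(\cdot)\Vert$ in $L^2$ does not by itself make $\max(\Vert f(x)\Vert,\Vert g(x)\Vert)^2$ pointwise comparable to $\tfrac{\Vert f(x)\Vert^2+\Vert g(x)\Vert^2}{2}$. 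What actually closes the argument is combining the two bounds $\left\Vert\tfrac{f+g}{2}\right\Vert^2 \leq \left(\tfrac{\Vert f\Vert+\Vert g\Vert}{2}\right)^2$ (always) and $\left\Vert\tfrac{f+g}{2}\right\Vert^2 \leq (1-\delta_0(\eta))^2\max(\Vert f\Vert,\Vert g\Vert)^2$ (on $A$) and observing that their \emph{minimum} is bounded above by $\left(\tfrac{\Vert f\Vert+\Vert g\Vert}{2}\right)^2 - c(\eta)\cdot\tfrac{\Vert f\Vert^2+\Vert g\Vert^2}{2}$ for an explicit $c(\eta)>0$, after which integration and your lower bound $\int_A \max(\Vert f\Vert,\Vert g\Vert)^2 \gtrsim \varepsilon^2$ finish the job. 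With that refinement the argument goes through and yields a $\delta_0'$ depending only on $\delta_0$.
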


\begin{remark}
In the above theorem,  we implicitly use the fact that $L^2 (X, \mu ; \B^*)$ is isometrically isomorphic to $(L^2 (X, \mu ; \B))^*$.  This follows from \cite[Section IV.1,  Theorem 1]{DU} combined with the fact that reflexive Banach spaces (and in particular uniformly convex Banach spaces) have the Radon-Nikod\'{y}m property (see  \cite[Section III.2,  Corollary 13]{DU}).
\end{remark}

\subsection{Linear Representation of groups on Banach spaces}

Given a topological group $G$ and a Banach space $\B$,  a \textit{linear representation of $G$ on $\B$} is a continuous homomorphism $\pi : G \rightarrow \GL (\B)$, where $\GL (\B)$ denotes the group of all invertible linear transformations of $\B$ with the strong operator topology.   Throughout this paper,  $\pi$ will  denote a continuous representation.  

For a given linear representation of $G$ on $\B$, the contragredient representation is the map $\pi^* : G \rightarrow \GL (\B^*)$ defined as 
$$\forall g \in G,  \forall \xi \in \B, \eta \in \B^*,  \langle \pi (g)\xi, \eta \rangle =  \langle \xi, \pi^* (g^{-1}) \eta \rangle .$$
We note that if $\pi$ is an isometric representation, then $\pi^*$ is also isometric, but in general $\pi^*$ need not be continuous. However,  for every reflexive (and hence super-reflexive) Banach space $\B$,  if $\pi$ is continuous, then so is $\pi^*$. 

Below,  we will need the following result from \cite{BFGM}:
\begin{proposition}\cite[Proposition 2.6]{BFGM}
\label{B-fixed + B' prop}
Let $\B$ be a super-reflexive Banach space, $G$ be a topological group and $\pi : G \rightarrow O(\B)$ be a continuous linear isometric representation.  Denote 
$\B ' (\pi)$ to be the annihilator of $(\B^{*})^{\pi^{*} (G)}$ in $\B$,  i.e.,  
$$\B ' (\pi) = \lbrace \xi \in \B : \forall \eta \in (\B^{*})^{\pi^{*} (G)},  \langle \xi,  \eta \rangle =0 \rbrace.$$
Then $\B = \B^{\pi (G)} \oplus \B ' (\pi)$.  
\end{proposition}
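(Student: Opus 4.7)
The plan is to produce the decomposition explicitly by finding, for each $\xi \in \B$, a $G$-invariant element in the closed convex hull of the orbit $\pi(G)\xi$, and then to verify that the sum is direct by a Hahn--Banach argument applied to the contragredient. Both summands are automatically closed: $\B^{\pi(G)} = \bigcap_{g \in G} \ker(I - \pi(g))$, and $\B'(\pi)$ is an annihilator. Super-reflexivity of $\B$ buys reflexivity of $\B$ and of $\B^*$, hence weak compactness of bounded closed convex subsets of either space, and (as already noted in the excerpt) continuity of $\pi^*$ on $\B^*$, which is moreover isometric.

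For existence of the decomposition, I would fix $\xi \in \B$ and set $C_\xi := \overline{\mathrm{co}}(\pi(G)\xi)$. The orbit is bounded because $\pi$ is isometric, so $C_\xi$ is a bounded closed convex set in a reflexive Banach space and thus weakly compact. The group $\pi(G)$ acts on $C_\xi$ by norm-continuous (hence weakly continuous) affine isometries, and this action is noncontracting in the sense of Ryll--Nardzewski, since $\Vert \pi(g)x - \pi(g)y \Vert = \Vert x-y\Vert > 0$ whenever $x \ne y$ in $C_\xi$. The Ryll--Nardzewski fixed point theorem then supplies a common fixed point $\xi_0 \in C_\xi \cap \B^{\pi(G)}$. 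For any $\eta \in (\B^*)^{\pi^*(G)}$, the pairing $\langle \cdot, \eta \rangle$ is constant along the orbit $\pi(G)\xi$ because $\langle \pi(g)\xi, \eta \rangle = \langle \xi, \pi^*(g^{-1}) \eta \rangle = \langle \xi, \eta \rangle$, and by linearity and norm continuity it remains constant on $C_\xi$. Hence $\langle \xi - \xi_0, \eta \rangle = 0$ for every such $\eta$, so $\xi - \xi_0 \in \B'(\pi)$ and the writing $\xi = \xi_0 + (\xi - \xi_0)$ gives the required decomposition.

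For the direct-sum property, I would take $\xi \in \B^{\pi(G)} \cap \B'(\pi)$ and, if $\xi \ne 0$, invoke Hahn--Banach to pick $\eta_0 \in \B^*$ with $\langle \xi, \eta_0 \rangle = \Vert \xi \Vert$. Applying the previous paragraph's argument to the super-reflexive space $\B^*$, the continuous isometric representation $\pi^*$, and the vector $\eta_0$, I would obtain $\eta \in \overline{\mathrm{co}}(\pi^*(G)\eta_0) \cap (\B^*)^{\pi^*(G)}$. Since $\xi$ is $G$-invariant, $\langle \xi, \pi^*(g^{-1}) \eta_0 \rangle = \langle \pi(g^{-1})\xi, \eta_0 \rangle = \Vert \xi \Vert$ for every $g$, and taking convex combinations and norm limits propagates this equality to $\langle \xi, \eta \rangle = \Vert \xi \Vert$. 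On the other hand, $\xi \in \B'(\pi)$ forces $\langle \xi, \eta \rangle = 0$, so $\Vert \xi \Vert = 0$, contradiction. This gives $\B^{\pi(G)} \cap \B'(\pi) = 0$ and completes the decomposition $\B = \B^{\pi(G)} \oplus \B'(\pi)$.

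The main obstacle is choosing the right fixed-point tool: Ryll--Nardzewski fits perfectly because its two hypotheses, weak compactness of the target convex set and a norm-level noncontraction condition, are immediate consequences of reflexivity and of $\pi$ being isometric, respectively. An alternative based on uniqueness of a Chebyshev center in a uniformly convex renorming is possible, but such a renorming need not preserve the isometric character of $\pi$, so extracting $G$-invariance of the center becomes delicate. Ryll--Nardzewski sidesteps this difficulty entirely and applies symmetrically to both $\pi$ on $\B$ and $\pi^*$ on $\B^*$, which is exactly what the two halves of the argument require.
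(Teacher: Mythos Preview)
Your proof is correct. Note, however, that the paper does not actually supply its own proof of this proposition: it is quoted as \cite[Proposition 2.6]{BFGM} and used as a black box.

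For comparison, the argument in \cite{BFGM} (and the one the present paper implicitly relies on elsewhere, e.g.\ in the proof of Theorem~\ref{bounded generation imply prop T thm}) proceeds via a renorming: one first passes to an equivalent uniformly convex norm on $\B$ with respect to which $\pi$ remains isometric, and then uses the fact that any nonempty closed bounded convex set in a uniformly convex space has a unique element of minimal norm, which is therefore $\pi(G)$-fixed. Your route via Ryll--Nardzewski is genuinely different and in some ways cleaner: it works directly from reflexivity without invoking the renorming lemma, and it applies symmetrically to $\pi$ and $\pi^*$ in a transparent way. The trade-off is that Ryll--Nardzewski is itself a nontrivial fixed-point theorem, whereas the minimal-norm argument is elementary once the renorming is in hand. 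Both approaches yield the same $G$-equivariant projection onto $\B^{\pi(G)}$ along $\B'(\pi)$.
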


\subsection{Steinberg relations in $\SL_3 (\mathbb{Z})$}

For $1 \leq i,j \leq 3, i \neq j$ and $m \in \mathbb{Z}$, denote $e_{i,j} (m)$  to be the elementary matrix with $1$'s along the main diagonal, $m$ in the $(i,j)$-entry and $0$ in all other entries.  Using the convention $[a,b] = a^{-1} b^{-1} a b$,  the group $\SL_3 (\mathbb{Z})$ has the following relations that are called the \textit{Steinberg relations}:
\begin{enumerate}
\item For every $1 \leq i,j \leq 3, i \neq j$ and every $m_1,  m_2 \in \mathbb{Z}$, 
$$e_{i,j} (m_1) e_{i,j} (m_2) = e_{i,j} (m_1 + m_2).$$
\item For every $1 \leq i,j, k\leq 3,  \lbrace i,j,k \rbrace = \lbrace 1,2,3 \rbrace$ and every $m_1,  m_2 \in \mathbb{Z}$, 
$$[e_{i,j} (m_1), e_{j,k} (m_2) ] = e_{i,k} (m_1 m_2).$$
\item For every $1 \leq i,j, k\leq 3,  \lbrace i,j,k \rbrace = \lbrace 1,2,3 \rbrace$ and every $m_1,  m_2 \in \mathbb{Z}$, 
$$[e_{i,j} (m_1), e_{i,k} (m_2) ] =  [ e_{j,i} (m_1), e_{k,i} (m_2) ] = I.$$
\end{enumerate}

The group $\SL_3 (\mathbb{Z})$ has other relations that do not stem from the Steinberg relations.  Forgetting the other relations of $\SL_3 (\mathbb{Z})$ yields the Steinberg group $\St_{3} (\mathbb{Z})$.  Explicitly,  the Steinberg group $\St_{3} (\mathbb{Z})$ is the group generated by the set $S = \lbrace x_{i,j} : 1 \leq i, j \leq 3,  i \neq j \rbrace$ with the following relations: For every $m \in \mathbb{Z}$, denote $x_{i,j} (m) = x_{i,j}^m$.  With this notation,  the relations defining $\St_{3} (\mathbb{Z})$ are:
\begin{enumerate}
\item For every $1 \leq i,j \leq 3, i \neq j$ and every $m_1,  m_2 \in \mathbb{Z}$, 
$$x_{i,j} (m_1) x_{i,j} (m_2) = x_{i,j} (m_1 + m_2).$$
\item For every $1 \leq i,j, k\leq 3,  \lbrace i,j,k \rbrace = \lbrace 1,2,3 \rbrace$ and every $m_1,  m_2 \in \mathbb{Z}$, 
$$[x_{i,j} (m_1), x_{j,k} (m_2) ] = x_{i,k} (m_1 m_2).$$
\item For every $1 \leq i,j, k\leq 3,  \lbrace i,j,k \rbrace = \lbrace 1,2,3 \rbrace$ and every $m_1,  m_2 \in \mathbb{Z}$, 
$$[x_{i,j} (m_1), x_{i,k} (m_2) ] =  [ x_{j,i} (m_1), x_{k,i} (m_2) ] = I.$$
\end{enumerate}

\subsection{The Heisenberg group $\rm H_3 (\mathbb{Z})$}

The Heisenberg group $\rm H_3 (\mathbb{Z})$ is the group 
$$\rm H_3 (\mathbb{Z}) = \langle \left.  x,y, z \right\vert [x,y] = z,  [x,z] = e,  [y, z] =e \rangle.$$
Below, we will use the following relations for the Heisenberg group that are not hard verify: for every $k,m \in \mathbb{Z}$ it holds that 
$y^{-k} x^{m} y^{k} = x^m z^{km}$ and $x^{-k} y^{m} x^{k} = y^m z^{-km}$.

In the sequel,  we will use the fact that $\SL_3 (\mathbb{Z})$ (and $\St_3 (\mathbb{Z})$) contain several copies of  $\rm H_3 (\mathbb{Z})$. Explicitly,  for every $\lbrace i,j,k \rbrace = \lbrace 1,2,3 \rbrace$, if denote $\tilde{x} = e_{i,j} (1),  \tilde{y}= e_{j,k} (1) , \tilde{z} = e_{i,k} (1) \in \SL_3 (\mathbb{Z})$, 
then $\langle \tilde{x}, \tilde{y}, \tilde{z} \rangle < \SL_3 (\mathbb{Z})$ is isomorphic to $\rm H_3 (\mathbb{Z})$ (by the Steinberg relations) via the isomorphism $x \mapsto \tilde{x}, y\mapsto \tilde{y}, z \mapsto \tilde{z}$. 

\section{Banach property (T)}
\label{Banach prop T sec}

%This Section is devoted to discussing various definitions of (relative) Banach property (T).   

\subsection{Banach property $(T)$ for super-reflexive Banach spaces} 
\label{Banach property $(T)$ for super-reflexive Banach spaces}

%Here we will compare two definitions for Banach property (T) with respect to super-reflexive spaces - the original definition of Bader, Furman,  Gelander and Monod \cite{BFGM} given in the introduction and the definition of Banach property $(T^{\proj}_{\mathcal{E}})$ defined by de Laat and de la Salle \cite{LaatSalle2} that was inspired by the work of V. Lafforgue \cite{Laff1,  Laff2}.  

Bader, Furman,  Gelander and Monod \cite{BFGM}  gave an equivalent version to Banach property (T) for super-reflexive spaces that is more convenient to work with than their general definition.  In \cite{BFGM},  it is shown that if $\B$ is a super-reflexive Banach space and $\pi : G  \rightarrow O( \B)$ is a linear isometric representation, then one can pass to a compatible norm on $\B$ in which $\B$ is uniformly convex and $\pi$ remains a linear isometric representation with respect to this new norm.  It follows that for a given topological group $G$ the following are equivalent:
\begin{itemize}
\item The group $G$ has property $(T_\B)$ for every uniformly convex Banach space $\B$.
\item The group $G$ has property $(T_\B)$ for every super-reflexive Banach space $\B$.
\end{itemize}  
Thus,  below we will focus on property $(T_\B)$ for uniformly convex Banach spaces $\B$ and the general result for super-reflexive Banach spaces will follow.  

For uniformly convex Banach spaces,  \cite{BFGM} gave the following equivalent definition for property $(T_\B)$:
\begin{definition}
\cite[Remark 2.11]{BFGM}
\label{BFGM def}
Let $\B$ be uniformly convex space and $G$ be a topological group.  Denote 
$\B ' (\pi)$ to be the annihilator of $(\B^{*})^{\pi^{*} (G)}$ in $\B$,  i.e.,  
$$\B ' (\pi) = \lbrace \xi \in \B : \forall \eta \in (\B^{*})^{\pi^{*} (G)},  \langle \xi,  \eta \rangle =0 \rbrace.$$

The group $G$ has property $(T_\B)$ if for every continuous linear isometric representation $\pi : G \rightarrow O (\B)$,  the restricted representation $\pi ' : G \rightarrow O (\B ' (\pi))$ does not have almost invariant vectors, i.e., there is a Kazhdan pair $(K, \varepsilon)$  (that depends on $\pi$) where $K \subseteq G$ is compact and $\varepsilon >0$ such that for every vector $\xi \in \B ' (\pi)$ it holds that
$$\sup_{g \in K} \Vert \pi  ' (g) \xi - \xi \Vert \geq \varepsilon \Vert \xi \Vert.$$
\end{definition}

\begin{observation}
\label{B^pi(G)=0 obs}
Let $G$ be a topological group.  By the above definition the following are equivalent:
\begin{enumerate}
\item The group $G$ has property $(T_\B)$ for every uniformly convex Banach space $\B$.
\item For every uniformly convex Banach space $\B$ and every isometric representation $\pi : G \rightarrow O( \B)$ with $\B^{\pi (G)} = \lbrace 0 \rbrace$, there is a compact set $K \subseteq G$ and a constant $\varepsilon >0$ such that for every unit vector $\xi \in \B$,  
$$\sup_{g \in K} \Vert \pi  (g) \xi - \xi \Vert \geq \varepsilon \Vert \xi \Vert.$$
\end{enumerate}
\end{observation}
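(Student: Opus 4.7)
The proposal is to deduce the equivalence directly from Definition \ref{BFGM def} together with the decomposition $\B = \B^{\pi(G)} \oplus \B'(\pi)$ provided by Proposition \ref{B-fixed + B' prop}. In both directions the argument is essentially bookkeeping: one just has to reconcile the two formulations of ``no almost invariant vectors,'' one phrased in terms of the annihilator space $\B'(\pi)$ and the other in terms of the hypothesis $\B^{\pi(G)} = \{0\}$.

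For the implication (1) $\Rightarrow$ (2), I would fix a uniformly convex $\B$ and a representation $\pi : G \to O(\B)$ with $\B^{\pi(G)} = \{0\}$. By Proposition \ref{B-fixed + B' prop}, $\B = \B^{\pi(G)} \oplus \B'(\pi)$, so the vanishing of $\B^{\pi(G)}$ forces $\B'(\pi) = \B$. Consequently the restricted representation $\pi'$ appearing in Definition \ref{BFGM def} coincides with $\pi$ itself, and the Kazhdan pair $(K,\varepsilon)$ guaranteed by property $(T_\B)$ is precisely what (2) requires.

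For the implication (2) $\Rightarrow$ (1), I would fix an arbitrary uniformly convex $\B$ and a continuous linear isometric representation $\pi : G \to O(\B)$, and consider the restriction $\pi' : G \to O(\B'(\pi))$. The subspace $\B'(\pi) \subseteq \B$ is closed, hence a uniformly convex Banach space in its own right (with at least the modulus of convexity inherited from $\B$). Moreover, Proposition \ref{B-fixed + B' prop} yields
\[
(\B'(\pi))^{\pi'(G)} = \B'(\pi) \cap \B^{\pi(G)} = \{0\},
\]
since the sum $\B^{\pi(G)} \oplus \B'(\pi)$ is direct. Applying hypothesis (2) to the representation $\pi'$ on the uniformly convex space $\B'(\pi)$ produces a compact set $K \subseteq G$ and $\varepsilon > 0$ such that every unit vector $\xi \in \B'(\pi)$ satisfies $\sup_{g \in K}\Vert \pi'(g)\xi - \xi\Vert \geq \varepsilon$; this is exactly the assertion that $\pi'$ has no almost invariant vectors, so property $(T_\B)$ holds.

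I do not anticipate any real obstacle, since the work has already been absorbed into Proposition \ref{B-fixed + B' prop}. The only slight subtlety worth spelling out is the observation that a closed subspace of a uniformly convex Banach space is uniformly convex, which is immediate from the definition of the modulus of convexity by restricting $\xi,\eta$ to lie in the subspace.
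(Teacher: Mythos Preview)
Your proposal is correct and follows exactly the reasoning the paper has in mind: the observation is stated without proof because it is immediate from Definition \ref{BFGM def} together with the decomposition $\B = \B^{\pi(G)} \oplus \B'(\pi)$ of Proposition \ref{B-fixed + B' prop}, and you have simply spelled out the two directions of that bookkeeping.
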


\subsection{Relative Banach property (T)}
\label{Relative Banach property (T) subsec}

%Here we discuss various definitions of relative Banach property (T) including our definition of relative property $(T^\proj)$. There are two reasons this discussion is separate and not part of our discussion of Banach property (T): First,  the definitions given in previous works do not suit our needs.  Second,  in the relative version, we cannot prove the equivalence between the definitions that was proven for Banach property (T) in Theorem \ref{equivalence between def thm}.

Here we introduce a variation of relative Banach property (T) called relative Banach property $(T^{\proj})$ that can be seen as a generalization of the definition of Banach property $(T^{\proj})$ given in \cite{LaatSalle2}.  We will show that relative Banach property $(T^{\proj})$ is a-priori stronger than the definition of relative Banach property (T) given in the introduction (we do not know if the two definitions do in fact coincide - see Remark \ref{rel T proj vs rel T remark} below).  In order to define relative Banach property $(T^{\proj})$,  we will need to first introduce some notation and terminology.

Let $G$ be a locally compact group with Haar measure $\mu$.  We denote $C_c (G)$ to be the compactly supported continuous functions $f: G  \rightarrow \mathbb{C}$ with the convolution product.  We further denote $\Prob_c (G) \subseteq C_c (G)$ to be functions $f: G \rightarrow [0, \infty)$ such that 
$\int_G f(g) d \mu (g) =1$.  Given a continuous representation $\pi : G \rightarrow \GL (\B)$ where $\B$ is a Banach space,  we define for every $f \in C_c (G)$ an operator $\pi (f)$ via the Bochner integral
$$\pi (f)\xi = \int_G f(g) \pi  (g) \xi d \mu (g),  \forall \xi \in \B.$$

For a class of Banach spaces $\mathcal{E}$,  we denote $\mathcal{U} (G, \mathcal{E})$ to be the class of all continuous isometric linear representations $(\pi, \B)$ where $\B \in \mathcal{E}$.   When $G$ is obvious from the context,  we will denote $\mathcal{U} ( \mathcal{E}) = \mathcal{U} (G, \mathcal{E})$.  We define a norm $\Vert . \Vert_{\mathcal{U} (G, \mathcal{E})}$ on $C_c (G)$ by 
$$\Vert f \Vert_{\mathcal{U} (G, \mathcal{E})} = \sup_{\pi \in \mathcal{U} (G, \mathcal{E})} \Vert \pi (f) \Vert,$$
and denote $C_{\mathcal{U} (G, \mathcal{E})} (G)$ to be the completion of $C_c (G)$ with respect to this norm.  We note that for every $f \in C_{\mathcal{U} (G, \mathcal{E})} (G)$ and every $\pi \in \mathcal{U} (G, \mathcal{E})$,  the operator $\pi (f) \in B (\B)$ is well-defined as a limit of operators $\pi (f_n)$ with $f_n \in C_c (G)$.

\begin{definition}
%\label{relative prop T^proj def}
Let $G$ be a locally compact group with a subgroup $H < G$.  We will say that $(G,H)$ has relative property $(T_{\mathcal{E}}^\proj)$ if there is a sequence $h_n \in \Prob_c (G)$ that converges to $f \in C_{\mathcal{U} (G, \mathcal{E})} (G)$ (with respect to the norm $\Vert . \Vert_{\mathcal{U} (G, \mathcal{E})}$) such that for every $(\pi, \B) \in \mathcal{U} (G, \mathcal{E})$,  $\im (\pi (f)) \subseteq \B^{\pi (H)}$.  
\end{definition}

We show that relative property $(T^{\proj}_{\mathcal{E}})$ imply relative property $(T_{\B})$ for every $\B \in \mathcal{E}$ as defined above (see Definition \ref{relative property T Jol def}):
\begin{proposition}
\label{relative T^proj implies relative T prop}
Let $G$ be a locally compact group,  $H < G$ a subgroup and $\mathcal{E}$ a class of Banach spaces.  Assume that $(G,H)$ has relative property $(T^{\proj}_{\mathcal{E}})$.  Then for every $\B \in \mathcal{E}$,  the pair $(G,H)$ has relative property $(T_{\B})$.
\end{proposition}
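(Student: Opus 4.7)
The plan is direct and largely amounts to unpacking definitions. Fix $\B \in \mathcal{E}$, a continuous isometric representation $\pi : G \to O(\B)$, and an accuracy $\gamma > 0$. By the hypothesis of relative property $(T^{\proj}_{\mathcal{E}})$, choose a sequence $h_n \in \Prob_c(G)$ converging in $\Vert \cdot \Vert_{\mathcal{U}(G, \mathcal{E})}$ to some $f \in C_{\mathcal{U}(G,\mathcal{E})}(G)$ with $\im(\pi(f)) \subseteq \B^{\pi(H)}$.

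Since $(\pi, \B) \in \mathcal{U}(G, \mathcal{E})$, the inequality $\Vert \pi(h_n - f) \Vert \leq \Vert h_n - f \Vert_{\mathcal{U}(G, \mathcal{E})}$ is immediate from the definition of the $\mathcal{U}$-norm as a supremum over representations in this class. Hence I can pick an index $n$ large enough that $\Vert \pi(h_n) - \pi(f) \Vert < \gamma / 2$. I then take $K = \supp(h_n)$, which is compact because $h_n \in C_c(G)$, and set $\varepsilon = \gamma / 2$. These will serve as the Kazhdan-type data for the target statement.

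To verify the conclusion of Definition \ref{relative property T Jol def}, let $\xi \in \B$ be a unit vector with $\sup_{g \in K} \Vert \pi(g)\xi - \xi \Vert < \varepsilon$, and set $\eta = \pi(f)\xi$, which lies in $\B^{\pi(H)}$ by construction. Because $h_n \in \Prob_c(G)$ is supported in $K$, the Bochner integral estimate gives
$$\Vert \xi - \pi(h_n)\xi \Vert = \left\Vert \int_K h_n(g)\bigl(\xi - \pi(g)\xi\bigr)\, d\mu(g) \right\Vert \leq \int_K h_n(g)\,\Vert \xi - \pi(g)\xi \Vert\, d\mu(g) < \varepsilon.$$
Combining this with $\Vert \pi(h_n)\xi - \pi(f)\xi \Vert \leq \Vert \pi(h_n) - \pi(f) \Vert < \gamma/2$ via the triangle inequality yields $\Vert \xi - \eta \Vert < \varepsilon + \gamma/2 = \gamma$, as needed.

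I do not expect a genuine obstacle: the entire content has been packaged into the definition of property $(T^{\proj}_{\mathcal{E}})$ and the norm $\Vert \cdot \Vert_{\mathcal{U}(G, \mathcal{E})}$. The argument is the Banach-space analogue of the classical fact that a Kazhdan projection in an appropriate completion of the group algebra witnesses relative property (T); splitting $\pi(f)$ as a norm limit of $\pi(h_n)$ with $h_n$ compactly supported probability measures simultaneously produces the compact Kazhdan set (as the support of $h_n$) and the approximating almost-invariant vector $\pi(h_n)\xi \approx \xi$, while $\pi(f)\xi$ supplies the nearby $\pi(H)$-invariant vector.
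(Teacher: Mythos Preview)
Your proof is correct and essentially identical to the paper's: both pick an approximant $h \in \Prob_c(G)$ with $\Vert \pi(h) - \pi(f) \Vert < \gamma/2$, take $K = \supp(h)$ and $\varepsilon = \gamma/2$, and then combine the Bochner-integral estimate $\Vert \xi - \pi(h)\xi \Vert < \varepsilon$ with the triangle inequality to conclude $\Vert \xi - \pi(f)\xi \Vert < \gamma$. The only cosmetic difference is that the paper chooses $h$ via the $\mathcal{U}(\mathcal{E})$-norm bound (yielding $K,\varepsilon$ uniform over all $\pi$), whereas you choose $n$ via the single operator norm $\Vert \pi(h_n) - \pi(f) \Vert$; since Definition~\ref{relative property T Jol def} allows $K,\varepsilon$ to depend on $\pi$, this is immaterial.
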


\begin{proof}
We need to show that for every $\gamma >0$,  there are $K \subseteq G$ compact and $\varepsilon >0$, such that for every $(\pi, \B) \in \mathcal{U} (\mathcal{E})$ and every unit vector $\xi \in \B$,  if $\sup_{g \in K} \Vert \pi  (g) \xi - \xi \Vert < \varepsilon$,  then there is $\eta \in \B^{\pi (H)}$ such that $\Vert \xi - \eta \Vert < \gamma$.  

Let $\gamma >0$ arbitrary.  By the assumption that $(G,H)$ has relative property $(T^{\proj}_{\mathcal{E}})$ it follows that there is a sequence $h_n \in \Prob_c (G)$ that converges to $f \in C_{\mathcal{U} (\mathcal{E})} (G)$ such that for every $(\pi, \B) \in  \mathcal{U} (\mathcal{E})$,  $\im (\pi (f)) \subseteq \B^{\pi (H)}$.  

For $f \in C_{\mathcal{U} (\mathcal{E})} (G)$ as above, there is $h \in \Prob_c (G)$ such that $\Vert f-h \Vert_{\mathcal{U} (\mathcal{E})} < \frac{\gamma}{2}$.  We take $K$ to be a compact set such that $\supp (h) \subseteq K$ and $\varepsilon = \frac{\gamma}{2}$ and show that this choice of $K, \varepsilon$ fulfils the needed condition.

Indeed,  for every $(\pi, \B) \in \mathcal{U} (\mathcal{E})$ and every unit vector $\xi \in \B$,  if $\sup_{g \in K} \Vert \pi  (g) \xi - \xi \Vert < \varepsilon = \frac{\gamma}{2}$,  then for $\eta = \pi (f) \xi \in \B^{\pi (H)}$ it holds that
\begin{dmath*}
\Vert \xi - \eta \Vert \leq \Vert \xi - \pi  (h) \xi \Vert + \Vert \pi  (h) \xi - \eta \Vert <
\left\Vert \int_{G} h (g) (\xi - \pi  (g) \xi) d \mu (g) \right\Vert +  \frac{\gamma}{2} \leq \\
{\int_{G} h (g) \left\Vert  \xi - \pi  (g) \xi \right\Vert d \mu (g)}  +  \frac{\gamma}{2} \leq 
\max_{g \in \supp (h)}  \left\Vert  \xi - \pi  (g) \xi \right\Vert + \frac{\gamma}{2} \leq 
\gamma
\end{dmath*}
as needed.
\end{proof}

\begin{remark}
\label{rel T proj vs rel T remark}
We do not know if the opposite direction of the above proposition is also true,  i.e.,  if relative property $(T_{\B})$ for every $\B \in \mathcal{E}$ implies property $(T^{\proj}_{\mathcal{E}})$.  The problem is that even in the classical setting of Hilbert spaces there is not natural candidate for the sequence $h_n \in \Prob_c (G)$.  To illustrate this,  we consider what should be a simple situation: Let $G$ be a finitely generating group with a finite generating set $S$ and $N \triangleleft G$ a normal subgroup and $\mathcal{H}$ be the class of all Hilbert spaces.  

We recall that from that fact that $N$ is a normal subgroup it follows for every unitary $(\pi,  \mathbb{H})$ of $G$ on a Hilbert space $\mathbb{H}$ the subspaces $\mathbb{H}^{\pi (N)},  (\mathbb{H}^{\pi (N)})^{\perp}$ are $G$-invariant subspaces with respect to the $G$ action.  In this setting,  relative property $(T)$ for $(G,N)$ can be described by the following formulation in \cite[Theorem 1.2 (b2)]{Jol}: There is $\varepsilon_0 >0$ such that for every $\alpha >0$,  every unitary representation $(\pi,  \mathbb{H})$ of $G$ on a Hilbert space $\mathbb{H}$ and every unit vector $\xi \in \mathbb{H}$, if 
$$\max_{s \in S} \Vert \pi (s) \xi - \xi \Vert \leq \alpha \varepsilon_0$$
then $\Vert \xi - P_{\mathbb{H}^{\pi (N)} }\xi \Vert \leq \alpha$ where $P_{\mathbb{H}^{\pi (N)} }$ is the orthogonal projection on $\mathbb{H}^{\pi (N)}$.  

Our naive guess for $h_n \in \Prob_c (G)$ is the sequence 
$$h_n = \left(\frac{1}{2} I + \frac{1}{2 \vert S \vert} \sum_{s \in S} s \right)^n ,$$
(which is the sequence that converges to a Kazhdan projection when $N = G$).   For every unitary representation $(\pi,  \mathbb{H})$,  $\left. \pi \right\vert_{(\mathbb{H}^{\pi (N)})^{\perp}} (h_n)$ indeed converges to $0$ and the rate of convergence can be bounded independently of $\pi$.  However,  we see no reason that $\left. \pi \right\vert_{\mathbb{H}^{\pi (N)}} (h_n)$ will converge when $N \neq G$ and thus (as far as we can tell) this naive attempt fails.

\end{remark}

%\begin{remark}
%As noted in \cite[Section 5]{LaatSalle2},  for a class of uniformly convex spaces $\mathcal{E}$,  property $(T_{\mathcal{E}})$ implies property $(T_\B)$ for every $\B \in \mathcal{E}$,  but not vice-versa.  Even for a single uniformly convex Banach space $\B$,  property $(T_{\lbrace \B \rbrace})$ (i.e., property $(T_{\mathcal{E}})$ as defined above for $\mathcal{E} = \lbrace \B \rbrace$) is stronger than property $T_\B$ of \cite{BFGM} - see \cite[Proposition 5.1]{LaatSalle2} that shows that property $T_\B$ of \cite{BFGM} is a non-uniform version of our property $(T_{\lbrace \B \rbrace})$.
%\end{remark}

\subsection{Hereditary properties of property $(T_\B)$}
\label{Hereditary properties of property (T_B) subsec}

Lafforgue showed that property $(T_{\B})$ is inherited by lattices via an induction of representation:
\begin{proposition}
\label{prop T passed to lattice prop}
\cite[Proposition 4.5]{Laff1}, \cite[Proposition 5.3]{Laff2}
Let $G$ be a locally compact group,  $\Gamma < G$ a lattice and $\mathcal{E}$ a class of Banach spaces.  Also let $\mathcal{E} '$ be a class of Banach spaces such that for every $\B \in \mathcal{E}$ and every finite measure space $(X, \mu)$ it holds that $L^2 (X, \mu ; \B) \in \mathcal{E} '$.  If $G$ has property $(T_{\B '})$ for every $\B ' \in  \mathcal{E} '$ , then $\Gamma$ has property $(T_{\B})$ for every $\B \in \mathcal{E}$. 
\end{proposition}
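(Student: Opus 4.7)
\medskip

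\textbf{Proof proposal.} The plan is to use the classical induction of representations argument, adapted to the Banach setting, exactly as in Bekka--Harpe--Valette's treatment of property (T) inheritance. Fix $\B \in \mathcal{E}$ and a continuous isometric representation $\pi : \Gamma \rightarrow O(\B)$. Using super-reflexivity (passing to an equivalent uniformly convex norm) and Proposition 2.6 applied to $\Gamma$, it suffices to produce a Kazhdan pair for the restriction of $\pi$ to $\B'(\pi)$, which is the annihilator of $(\B^*)^{\pi^*(\Gamma)}$. I will derive such a pair from a Kazhdan pair for $G$ acting on an appropriate induced Banach representation.

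The first step is to build the induced representation. Let $\mu$ be the finite $G$-invariant measure on $G/\Gamma$, pick a Borel fundamental domain $D \subseteq G$, and let $\tilde{\B}$ be the Banach space of (classes of) Bochner measurable maps $F : G \rightarrow \B$ satisfying the cocycle relation $F(g\gamma) = \pi(\gamma^{-1}) F(g)$ a.e., equipped with the norm $\Vert F \Vert^2 = \int_D \Vert F(g) \Vert_{\B}^2 \, dg$. The restriction $F \mapsto F|_D$ is an isometric isomorphism onto $L^2(D,\mu;\B) \cong L^2(G/\Gamma,\mu;\B)$, so by hypothesis $\tilde{\B} \in \mathcal{E}'$. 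The induced representation $\tilde{\pi}(h) F(g) = F(h^{-1} g)$ is a continuous isometric representation of $G$ on $\tilde{\B}$ (continuity via a standard approximation argument for left translation on $L^2$-spaces of Banach-valued functions), so by hypothesis $G$ has property $(T_{\tilde\B})$.

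The next step is to pull back a Kazhdan pair from $\tilde{\pi}$ to $\pi$. Given $\xi \in \B$ I associate to it the vector $F_\xi \in \tilde{\B}$ defined by $F_\xi(d\gamma) = \pi(\gamma^{-1}) \xi$ for $d \in D, \gamma \in \Gamma$; one checks $\Vert F_\xi \Vert^2 = \mu(G/\Gamma) \, \Vert \xi \Vert^2$ and that the map $\xi \mapsto F_\xi$ is linear and isometric up to a constant. For a compact $Q \subseteq G$, the cocycle function $c_Q : D \rightarrow \Gamma$ given by $h^{-1} d \in D \cdot c_Q(h,d)^{-1}$ takes finitely many values $\Sigma_Q \subseteq \Gamma$ for $h \in Q$, so a short calculation bounds $\Vert \tilde{\pi}(h) F_\xi - F_\xi \Vert$ by a multiple of $\max_{\gamma \in \Sigma_Q} \Vert \pi(\gamma) \xi - \xi \Vert$. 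This converts a Kazhdan pair $(Q,\varepsilon)$ of $G$ on $\tilde{\B}$ into a candidate Kazhdan pair $(\Sigma_Q, \varepsilon')$ for $\Gamma$ on $\B$.

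The final and most delicate step is to verify that $F_\xi \in \tilde{\B}'(\tilde{\pi})$ whenever $\xi \in \B'(\pi)$; this is what lets me apply $G$'s Kazhdan pair on $\tilde{\B}'(\tilde{\pi})$. For this I need to describe $(\tilde{\B}^*)^{\tilde{\pi}^*(G)}$. Using that $\tilde{\B}^* \cong L^2(G/\Gamma, \mu ; \B^*)$ (as recalled in the remark after Theorem 2.5) and that the dual of the induced representation is itself an induced representation of $\pi^*$, a Fubini argument identifies a $\tilde{\pi}^*(G)$-invariant functional with a $G$-invariant map $\hat F : G \rightarrow \B^*$ satisfying the $\pi^*$-cocycle; $G$-invariance forces $\hat F$ to be constant, and the cocycle then forces the constant to lie in $(\B^*)^{\pi^*(\Gamma)}$. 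Pairing $F_\xi$ with such a constant $\eta \in (\B^*)^{\pi^*(\Gamma)}$ gives $\langle F_\xi, \hat F \rangle = \mu(G/\Gamma) \langle \xi, \eta \rangle$, which vanishes precisely because $\xi \in \B'(\pi)$. I expect this identification of the $\tilde{\pi}^*(G)$-fixed vectors to be the main obstacle, since it requires the duality between $L^2(G/\Gamma;\B)$ and $L^2(G/\Gamma;\B^*)$ (guaranteed here by reflexivity via the Radon--Nikod\'ym property) plus a careful decomposition of invariant $L^2$ sections of an induced bundle; everything else is bookkeeping.
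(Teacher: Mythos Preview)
The paper does not give its own proof of this proposition; it simply cites Lafforgue.  Your induction-of-representations argument is precisely the standard one used there, so at the level of strategy you are reproducing the cited proof.

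Two points need attention.  First, you invoke super-reflexivity and the decomposition $\B = \B^{\pi(\Gamma)} \oplus \B'(\pi)$ in order to work in the complement $\B'(\pi)$, but the proposition as stated carries no reflexivity hypothesis on $\mathcal{E}$.  This is easily repaired by working directly with the quotient definition of $(T_\B)$.  The embedding $\xi \mapsto F_\xi$ carries $\B^{\pi(\Gamma)}$ \emph{onto} $\tilde{\B}^{\tilde{\pi}(G)}$ (the $G$-invariant vectors in the induced space are exactly the constant sections valued in $\B^{\pi(\Gamma)}$, and this identification needs no duality), hence descends to an isometric (up to a constant) embedding $\B / \B^{\pi(\Gamma)} \hookrightarrow \tilde{\B} / \tilde{\B}^{\tilde{\pi}(G)}$, after which your Kazhdan-pair transfer applies verbatim.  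This also makes your ``delicate'' final step and the Radon--Nikod\'ym discussion unnecessary.

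Second, your claim that the cocycle $c_Q$ takes only finitely many values is false when $\Gamma$ is non-uniform: the fundamental domain $D$ is then not relatively compact, and as $g$ ranges over $D$ the element $\gamma(h,g)$ can visit infinitely many lattice elements even for a fixed $h \in Q$.  The standard fix is to choose, for each $\varepsilon>0$, a compact $D_\varepsilon \subseteq D$ with $\mu(D\setminus D_\varepsilon)$ small; over $D_\varepsilon$ the cocycle does have finite image $\Sigma_{Q,\varepsilon}$, while the tail integral over $D\setminus D_\varepsilon$ contributes at most $4\,\mu(D\setminus D_\varepsilon)\,\Vert \xi \Vert^2$ to $\Vert \tilde{\pi}(h)F_\xi - F_\xi\Vert^2$.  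With these two adjustments your argument goes through.
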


\begin{remark}
The above formulation differs from the formulation in \cite{Laff1,  Laff2} since we do not assume that $\mathcal{E}$ is closed under passing to $L^2$-sums.
\end{remark}

\begin{corollary}
\label{passing to lattices coro}
Let $G$ be a locally compact group and $\Gamma < G$ a lattice.  If $G$ has property $(T_\B)$ for every uniformly convex Banach space $\B$,  then $\Gamma$ has  property $(T_\B)$ for every super-reflexive Banach space $\B$.
\end{corollary}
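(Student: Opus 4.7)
The plan is to combine Proposition \ref{prop T passed to lattice prop} with Theorem \ref{L^2 sum of uc spaces thm} and the super-reflexive versus uniformly convex equivalence recalled at the start of \cref{Banach property $(T)$ for super-reflexive Banach spaces}. The first move is a reduction on the target: by that equivalence, showing that $\Gamma$ has $(T_\B)$ for every super-reflexive $\B$ is the same as showing it for every uniformly convex $\B$, so I would fix an arbitrary uniformly convex Banach space $\B$ with some modulus of convexity $\delta$, pick any monotone increasing $\delta_0 : (0,2] \to (0,1]$ with $\delta_0 \leq \delta$, and set $\mathcal{E} := \mathcal{E}_{uc}(\delta_0)$, so in particular $\B \in \mathcal{E}$.

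The main step is then to produce a class $\mathcal{E}'$ fulfilling the two hypotheses of Proposition \ref{prop T passed to lattice prop}. Applying Theorem \ref{L^2 sum of uc spaces thm} to $\delta_0$ yields a function $\delta_0' : (0,2] \to (0,1]$ such that $L^2(X,\mu;\B') \in \mathcal{E}_{uc}(\delta_0')$ for every $\B' \in \mathcal{E}_{uc}(\delta_0)$ and every finite measure space $(X,\mu)$. I would set $\mathcal{E}' := \mathcal{E}_{uc}(\delta_0')$. The $L^2$-closure hypothesis of Proposition \ref{prop T passed to lattice prop} holds by construction. The remaining hypothesis, that $G$ have $(T_{\B'})$ for every $\B' \in \mathcal{E}'$, is immediate from the standing assumption that $G$ has $(T_{\B'})$ for every uniformly convex $\B'$, since every element of $\mathcal{E}_{uc}(\delta_0')$ is uniformly convex.

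Invoking Proposition \ref{prop T passed to lattice prop} then gives $(T_{\B'})$ for $\Gamma$ for every $\B' \in \mathcal{E}$, in particular for the original $\B$; since $\B$ was arbitrary uniformly convex, $\Gamma$ has $(T_\B)$ for every uniformly convex $\B$, and running the equivalence in the opposite direction promotes this to the full super-reflexive class. There is no substantive obstacle: the statement is essentially a bookkeeping combination of the earlier results, and the only mild subtlety to verify is that Theorem \ref{L^2 sum of uc spaces thm} indeed furnishes a single modulus $\delta_0'$ that works uniformly across all members of $\mathcal{E}_{uc}(\delta_0)$, which is precisely what that theorem asserts.
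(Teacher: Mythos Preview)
Your proposal is correct and follows essentially the same approach as the paper, which simply states that the corollary ``follows immediately from Proposition \ref{prop T passed to lattice prop} and Theorem \ref{L^2 sum of uc spaces thm}.'' You have merely unpacked this one-liner by making the choice of the classes $\mathcal{E}$ and $\mathcal{E}'$ explicit and verifying the hypotheses carefully; the argument is the same.
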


\begin{proof}
This follows immediately from Proposition \ref{prop T passed to lattice prop} and Theorem \ref{L^2 sum of uc spaces thm}.
\end{proof}

\section{Bounded generation and Banach property (T)}
\label{Bounded generation and Banach property (T) sec}

In this section,  we adapt a bounded generation argument of Shalom \cite{Shalom1} to our setting and show that,  in our setting,  relative Banach property (T) and bounded generation imply Banach property (T).  

\begin{definition}
Let $G$ be a group with subgroups $H_1,...,H_k$.  We say that $H_1,...,H_k$ boundedly generate $G$ if there is a number $\nu = \nu (H_1,...,H_k) \in \mathbb{N}$ such that every element $g \in G$ can be written by at most $\nu$ elements of $H_1 \cup ... \cup H_k$. 
\end{definition}

\begin{lemma}
\label{bounded orbit lemma}
Let $G$ be a group with subgroups $H_1,...,H_k$ that boundedly generate $G$ and denote $\nu = \nu (H_1,...,H_k)$ as above.  Also,  let $\pi :G \rightarrow O (\B)$ be a continuous linear isometric representation.  Assume that there are $\eta_1,..., \eta_k \in \B$ such that for every $1 \leq i \leq k$,  $\eta_i \in \B^{\pi (H_i)}$.   Then for every $\xi \in \B$ and every $g \in G$, 
$$\Vert \pi (g) \xi - \xi \Vert \leq 2 \nu \max_{1 \leq i \leq k} \Vert \xi - \eta_i \Vert.$$
\end{lemma}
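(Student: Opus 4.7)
The plan is to exploit the bounded generation to write any $g \in G$ as a short product of elements, each lying in some $H_i$, and then telescope along this product using the isometry of $\pi$.

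First, fix $\xi \in \B$ and $g \in G$. By the bounded generation hypothesis, there exist $m \leq \nu$ and elements $s_1, \ldots, s_m$ with $s_j \in H_{i_j}$ for some $i_j \in \{1, \ldots, k\}$ such that $g = s_1 s_2 \cdots s_m$. The first key observation is a single-factor bound: for any $s \in H_i$, since $\pi(s)\eta_i = \eta_i$ and $\pi(s)$ is an isometry,
\[
\Vert \pi(s)\xi - \xi \Vert \leq \Vert \pi(s)\xi - \pi(s)\eta_i \Vert + \Vert \eta_i - \xi \Vert = 2 \Vert \xi - \eta_i \Vert \leq 2 \max_{1 \leq j \leq k} \Vert \xi - \eta_j \Vert.
\]

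Next, I would telescope along the factorization of $g$. Writing
\[
\pi(g)\xi - \xi = \sum_{j=0}^{m-1} \bigl( \pi(s_1 \cdots s_{j+1})\xi - \pi(s_1 \cdots s_j)\xi \bigr) = \sum_{j=0}^{m-1} \pi(s_1 \cdots s_j)\bigl( \pi(s_{j+1})\xi - \xi \bigr),
\]
and applying the triangle inequality together with the fact that each $\pi(s_1 \cdots s_j)$ is an isometry, we obtain
\[
\Vert \pi(g)\xi - \xi \Vert \leq \sum_{j=0}^{m-1} \Vert \pi(s_{j+1})\xi - \xi \Vert \leq m \cdot 2 \max_{1 \leq j \leq k} \Vert \xi - \eta_j \Vert \leq 2\nu \max_{1 \leq j \leq k} \Vert \xi - \eta_j \Vert.
\]

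There is no real obstacle here; the argument is purely formal and relies only on (i) isometry of $\pi$, (ii) invariance of each $\eta_i$ under $\pi(H_i)$, and (iii) the uniform bound $\nu$ on word length. The only thing worth being careful about is that the word length $m$ in the factorization of $g$ is bounded by $\nu$ uniformly over $g \in G$, which is exactly the content of bounded generation.
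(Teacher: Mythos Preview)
Your proof is correct and is essentially the same as the paper's: the paper phrases the telescoping step as an induction on the word length (peeling off the last factor at each stage), while you write out the full telescoping sum directly, but the underlying argument---single-factor bound via invariance of $\eta_i$ plus isometry, then sum over at most $\nu$ factors---is identical.
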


\begin{proof}
Let $g \in G$ such that $g = g_1 ...g_j$ with $g_1,...,g_j \in \bigcup_{i=1}^k H_i$.  We will prove by induction that for every $\xi \in \B$,
\begin{equation}
\label{ind.  ineq}
\Vert \pi (g) \xi - \xi \Vert \leq 2 j \max_{1 \leq i \leq k} \Vert \xi - \eta_i \Vert.
\end{equation}

For $j=1$,  there is $1 \leq i_0 \leq k$ such that $g \in H_{i_0}$.  Then 
\begin{align*}
\Vert \pi (g) \xi - \xi \Vert = \Vert \pi (g) \xi - \pi (g) \eta_{i_0} + \eta_{i_0} - \xi \Vert \leq 
 \Vert \pi (g) (\xi - \eta_{i_0}) \Vert + \Vert \eta_{i_0} - \xi \Vert = \\
 2 \Vert \xi - \eta_{i_0} \Vert \leq 2 \max_{1 \leq i \leq k} \Vert \xi - \eta_i \Vert.
\end{align*}

Assume \eqref{ind.  ineq} holds for $j$ and let $g = g_{1} ... g_{j+1}$ with $g_1,...,g_{j+1} \in \bigcup_{i=1}^k H_i$. Then for every $\xi \in \B$, 
\begin{align*}
\Vert \pi (g) \xi - \xi \Vert = \Vert \pi (g_1 ... g_{j} g_{j+1}) \xi - \pi (g_1 ... g_{j}) \xi + \pi (g_1 ... g_{j}) \xi - \xi \Vert \leq \\
\Vert \pi (g_1 ... g_{j}) (\pi (g_{j+1}) \xi - \xi)\Vert + \Vert \pi (g_1 ... g_{j}) \xi - \xi \Vert = \\
\Vert  (\pi (g_{j+1}) \xi - \xi)\Vert + \Vert \pi (g_1 ... g_{j}) \xi - \xi \Vert \leq^{\text{The induction assumption}} \\
2  \max_{1 \leq i \leq k} \Vert \xi - \eta_i \Vert + 2 j \max_{1 \leq i \leq k} \Vert \xi - \eta_i \Vert = 2 (j+1) \max_{1 \leq i \leq k} \Vert \xi - \eta_i \Vert.
\end{align*}

By the assumption of bounded generation,  every $g \in G$ can be written as $g = g_1 ... g_\nu$ with $g_1,...,g_\nu \in \bigcup_{i=1}^k H_i$ and thus it follows that for every $g \in G$ and every $\xi \in \B$, 
$$\Vert \pi (g) \xi - \xi \Vert \leq 2 \nu \max_{1 \leq i \leq k} \Vert \xi - \eta_i \Vert,$$
as needed.
\end{proof}

\begin{comment}
An immediate well-known corollary of this lemma is that for a reflexive Banach space $\B$,  property $(F_\B)$ can be deduced using bounded generation and relative fixed point properties (as noted, this is well-known and we prove it for the sake of completeness):
\begin{corollary}
\label{bounded gen imply f.p. coro}
Let $\B$ be a reflexive Banach space and $G$ be a group with subgroups $H_1,...,H_k$ that boundedly generate $G$.  Assume that for every $1 \leq i \leq k$,  the pair $(G,H_i)$ has relative property $(F_\B)$,  i.e.,  every affine isometric action $\rho$ of $G$ on $\B$ admits a vector $\eta$ that is fixed by the action of $H_i$ on $\B$,  i.e.,  for every  $g \in H_i$,  $\rho (g)\eta = \eta$.  Then $G$ has property $(F_\B)$.
\end{corollary}

\begin{proof}
By Ryll-Nardzewski fixed-point Theorem (which applies to every reflexive Banach space),  it is enough to show that the orbit of a vector $\xi \in \B$ is bounded.  Fix some $\xi \in \B$.   By the assumption of relative properties $(F_\B)$ for all the pairs $(G,H_i)$,  it follows that there are $\eta_1,..., \eta_k \in \B$ such that for every $1 \leq i \leq k$,  $\eta_i$ is fixed by the action of $H_i$ on $\B$.  From Lemma \ref{bounded orbit lemma},  it follows that for every $g \in G$, 
$$\Vert \rho (g) \xi - \xi \Vert \leq 2 \nu \max_{1 \leq i \leq k} \Vert \xi - \eta_i \Vert,$$
and in particular,  the orbit of $\xi$ is bounded as needed.  
\end{proof}
\end{comment}

\begin{theorem}
\label{bounded generation imply prop T thm}
Let $G$ be a locally compact group and $H_1,...,H_k <G$ subgroups that boundedly generate $G$.  If the pairs $(G,H_1),...,(G,H_k)$ has relative property $(T_\B)$ for every uniformly convex Banach space $\B$,  then $G$ has property $(T_\B)$ for every uniformly convex Banach space $\B$.
\end{theorem}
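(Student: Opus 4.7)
By Observation \ref{B^pi(G)=0 obs} it suffices to show that for every uniformly convex Banach space $\B$ and every continuous linear isometric representation $\pi : G \rightarrow O(\B)$ with $\B^{\pi(G)} = \lbrace 0 \rbrace$, the representation $\pi$ admits a Kazhdan pair. The plan is to feed relative property $(T_\B)$ for each pair $(G,H_i)$ into Lemma \ref{bounded orbit lemma} with a single small parameter $\gamma$, which forces any sufficiently almost $K$-invariant unit vector $\xi$ to have its entire $G$-orbit of diameter $< 1$. From such a $\xi$ I then extract a non-zero $G$-fixed vector via a Chebyshev center argument, contradicting $\B^{\pi(G)} = \lbrace 0 \rbrace$.

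Concretely, let $\nu = \nu(H_1, \dots, H_k)$ and set $\gamma = \frac{1}{4\nu}$. For each $1 \leq i \leq k$, relative property $(T_\B)$ for $(G, H_i)$ applied with this $\gamma$ yields a compact set $K_i \subseteq G$ and a constant $\varepsilon_i > 0$ such that every unit vector $\xi \in \B$ satisfying $\sup_{g \in K_i} \Vert \pi(g)\xi - \xi \Vert < \varepsilon_i$ admits some $\eta_i \in \B^{\pi(H_i)}$ with $\Vert \xi - \eta_i \Vert < \gamma$. Set $K = \bigcup_{i=1}^k K_i$ (which is compact) and $\varepsilon = \min_i \varepsilon_i > 0$; I claim $(K, \varepsilon)$ is a Kazhdan pair for $\pi$.

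Suppose a unit vector $\xi$ satisfies $\sup_{g \in K} \Vert \pi(g)\xi - \xi \Vert < \varepsilon$. For each $i$ select $\eta_i \in \B^{\pi(H_i)}$ with $\Vert \xi - \eta_i \Vert < \gamma$, and apply Lemma \ref{bounded orbit lemma} to obtain
$$\sup_{g \in G} \Vert \pi(g)\xi - \xi \Vert \leq 2\nu \max_i \Vert \xi - \eta_i \Vert < 2\nu \gamma = \tfrac{1}{2}.$$
Let $C = \overline{\operatorname{conv}}(\pi(G)\xi)$; by the above, $C$ is contained in the closed ball of radius $\tfrac{1}{2}$ centered at $\xi$. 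Since $\B$ is uniformly convex (hence reflexive and strictly convex), bounded subsets of $\B$ admit a unique Chebyshev center, so $C$ has a unique Chebyshev center $c$. Because $\pi(G)$ acts isometrically and stabilizes $C$, uniqueness forces $\pi(g)c = c$ for every $g \in G$, i.e., $c \in \B^{\pi(G)}$. Moreover $\xi \in C$ is itself a candidate minimizer witnessing Chebyshev radius at most $\tfrac{1}{2}$, so $\Vert \xi - c \Vert \leq \tfrac{1}{2}$, whence $\Vert c \Vert \geq 1 - \tfrac{1}{2} = \tfrac{1}{2} > 0$. This contradicts $\B^{\pi(G)} = \lbrace 0 \rbrace$, completing the proof.

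The main subtle step I anticipate is the Chebyshev center extraction: it relies on the classical fact that every bounded subset of a uniformly convex Banach space admits a unique Chebyshev center (existence via reflexivity, uniqueness via strict convexity). Everything else is a direct combination of the bounded generation hypothesis, the relative property $(T_\B)$ definition (Definition \ref{relative property T Jol def}), and Lemma \ref{bounded orbit lemma}.
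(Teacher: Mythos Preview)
Your proof is correct and follows essentially the same route as the paper's: reduce via Observation \ref{B^pi(G)=0 obs}, choose a small $\gamma$, collect the $(K_i,\varepsilon_i)$ from relative property $(T_\B)$, apply Lemma \ref{bounded orbit lemma} to bound the full $G$-orbit of a putative almost invariant unit vector, and then extract a non-zero $G$-fixed vector from the closed convex hull of the orbit to reach a contradiction. The only cosmetic difference is in the last step: the paper uses the unique element of minimal norm in $C=\overline{\operatorname{conv}}(\pi(G)\xi)$ (noting $0\notin C$ since $C$ lies in the ball of radius $\tfrac12$ about the unit vector $\xi$), whereas you use the Chebyshev center of $C$; both exist and are unique in a uniformly convex space and both are $G$-fixed by uniqueness, so the arguments are interchangeable.
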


\begin{proof}
By Observation \ref{B^pi(G)=0 obs},  we need to show that for every uniformly convex Banach space $\B$ and every $\pi: G \rightarrow O ( \B)$ with $\B^{\pi (G)} = \lbrace 0 \rbrace$ there is a compact set $K \subseteq G$ and $\varepsilon >0$ such that for every unit vector $\xi \in \B$, 
$$\sup_{g \in K} \Vert \pi  (g) \xi - \xi \Vert \geq \varepsilon.$$

Denote $\nu = \nu (H_1,...,H_k) \in \mathbb{N}$ as in the definition above.   By assumption, there are compact sets $K_1,...,K_k$ and constants $\varepsilon_1,..., \varepsilon_k>0$ such that for every $i =1,...,k$ and every unit vector $\xi \in \B$ if 
$$\sup_{g \in K_i} \Vert  \pi  (g) \xi - \xi \Vert < \varepsilon_i,$$ 
then there is $\eta_i \in \B^{\pi (H_i)}$ such that 
$$\Vert \xi - \eta_i \Vert \leq \frac{1}{4 (\nu+1)}.$$

Denote $K = \bigcup_{i=1}^k K_i$ and $\varepsilon = \min \lbrace \varepsilon_1,...,\varepsilon_k \rbrace$.  We will show that for this choice of $K, \varepsilon$ it holds for every unit vector $\xi \in \B$ that 
$$\sup_{g \in K} \Vert  \pi  (g) \xi - \xi \Vert \geq \varepsilon.$$

Assume towards contradiction that there is a unit vector $\xi \in \B $ such that
$$\sup_{g \in K} \Vert \pi  (g) \xi - \xi \Vert < \varepsilon.$$

Thus,  for every $i =1,...,k$ there is $\eta_i \in \B^{\pi (H_i)}$ such that $\Vert \xi - \eta_i \Vert < \frac{1}{4 (\nu+1)}$.  Applying Lemma \ref{bounded orbit lemma},  it follows that for every $g \in G$, 
$$\Vert \pi  (g) \xi - \xi \Vert \leq \frac{\nu}{2(\nu+1)} \leq \frac{1}{2}.$$

Thus the orbit of $\xi$ in $\B $ under the action of $G$ is contained in a closed ball of radius $\frac{1}{2}$ around $\xi$.  Denote $C$ to be the closure of the convex hull of the orbit of $\xi$.  Recall that $\xi$ is a unit vector and thus $0 \notin C$.  By uniform convexity there is a unique vector with a minimal norm in $C$ and thus this vector is fixed by the $\pi$ action of $G$.  It follows that $C \cap \B^{\pi (G)} \neq \emptyset$ which contradicts the assumption that $\B^{\pi (G)} \neq \lbrace 0 \rbrace$. 
\end{proof}

\section{Averaging operations on $\rm H_3 (\mathbb{Z})$}
\label{Averaging operations sec}

In this section,  we will prove norm bounds on averaging operations on the Heisenberg group that are needed in our proof of relative Banach property (T) stated in the introduction. 

For every $k \in \mathbb{N} \cup \lbrace 0 \rbrace$,  we define $X_k,  Y_k,  Z_k  \in \Prob_c (\rm H_3 (\mathbb{Z}))$ by 
$$X_k= \frac{e+x^{2^k}}{2},  Y_k = \frac{e+y^{2^k}}{2}, Z_k = \frac{e+z^{2^k}}{2}.$$

Also,  for $d \in \mathbb{N}$,  we define 
$$X^d = \frac{1}{2^{d}} \sum_{a=0}^{2^{d}-1} x^a, Y^d = \frac{1}{2^{d}} \sum_{b=0}^{2^{d}-1} y^b, Z^d = \frac{1}{2^{d}} \sum_{c=0}^{2^{d}-1} z^c.$$

\begin{observation}
\label{product identity obs}
For every $d \in \mathbb{N}$ it holds that
$$X^d = \prod_{a=0}^{d-1} X_a, Y^d = \prod_{b=0}^{d-1} Y_b, Z^d = \prod_{c=0}^{d-1} Z_c.$$
\end{observation}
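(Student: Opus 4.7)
The plan is to unpack both sides of the first equality using the group algebra structure of $\mathbb{C}[\rm H_3(\mathbb{Z})]$ (where convolution coincides with the product of formal sums) and compare coefficients; the arguments for $Y^d$ and $Z^d$ are identical since $x$, $y$, and $z$ each generate an abelian (in fact cyclic) subgroup.

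First I would compute the product $\prod_{a=0}^{d-1} X_a$ by expanding:
\begin{equation*}
\prod_{a=0}^{d-1} X_a \;=\; \prod_{a=0}^{d-1} \frac{e + x^{2^a}}{2} \;=\; \frac{1}{2^d} \prod_{a=0}^{d-1} \bigl(e + x^{2^a}\bigr).
\end{equation*}
Since the elements $x^{2^a}$ all lie in the cyclic subgroup $\langle x \rangle$, they commute, so expanding the product gives a sum over all choices of taking either $e$ or $x^{2^a}$ from each factor:
\begin{equation*}
\prod_{a=0}^{d-1} \bigl(e + x^{2^a}\bigr) \;=\; \sum_{(\varepsilon_0,\dots,\varepsilon_{d-1}) \in \{0,1\}^d} x^{\sum_{a=0}^{d-1} \varepsilon_a 2^a}.
\end{equation*}

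The key combinatorial input is the uniqueness of the binary expansion: the map $(\varepsilon_0,\dots,\varepsilon_{d-1}) \mapsto \sum_{a=0}^{d-1} \varepsilon_a 2^a$ is a bijection between $\{0,1\}^d$ and $\{0,1,\dots,2^d-1\}$. Consequently each power $x^n$ with $0 \leq n \leq 2^d - 1$ appears exactly once in the expansion, and we conclude
\begin{equation*}
\prod_{a=0}^{d-1} X_a \;=\; \frac{1}{2^d} \sum_{n=0}^{2^d-1} x^n \;=\; X^d.
\end{equation*}
The same argument, word for word, establishes the equalities for $Y^d$ and $Z^d$, using that $\langle y \rangle$ and $\langle z \rangle$ are cyclic subgroups in which the generators $y^{2^b}$ and $z^{2^c}$ commute with themselves.

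There is no real obstacle here: the only substantive ingredient is the commutativity of the powers of each generator (so that the product of convolution kernels distributes over the sum), together with uniqueness of binary expansions to identify the result with the uniform measure on $\{0,1,\dots,2^d-1\}$.
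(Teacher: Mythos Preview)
Your argument is correct and is exactly the standard binary-expansion computation one would expect; the paper itself gives no proof of this observation, treating it as self-evident.
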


\begin{theorem}
\label{change of order thm}
Let $d \in \mathbb{N}$ be a constant and $A,B \subseteq \lbrace 0,...,d-1 \rbrace$ be sets such that $(\max A) (\max B) \leq d-2$,  then for every Banach space $\B$ and every isometric linear representation $\pi :  H_3 (\mathbb{Z}) \rightarrow O (\B)$ it holds that 
$$\left\Vert \pi \left( \left( \left( \prod_{a \in A} X_a \right) \left( \prod_{b \in B} Y_b \right) - \left( \prod_{b \in B} Y_b \right) \left( \prod_{a \in A} X_a \right)   \right) Z^d \right)   \right\Vert \leq 8 \left( \frac{1}{2} \right)^{d-\max A - \max B}.$$

In particular,  for $d_1, d_2, d_3 \in \mathbb{N} \cup \lbrace 0 \rbrace$,  if $d_1+d_2 \leq d_3-2$,  then for any class of Banach spaces $\mathcal{E}$ it holds that 
$$\left\Vert \left( X^{d_1} Y^{d_2} - Y^{d_2} X^{d_1}    \right) Z^d    \right\Vert_{\mathcal{U} (\mathcal{E})} \leq 8 \left( \frac{1}{2} \right)^{d_3- (d_1+d_2)}.$$
\end{theorem}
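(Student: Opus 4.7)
The plan is to combine a direct combinatorial expansion of the commutator with a telescoping identity for $(z^m - e)Z^d$ that exploits the dyadic design of $Z^d$. First, expanding $X_a = (e + x^{2^a})/2$ gives
\[
\prod_{a \in A} X_a = \frac{1}{2^{|A|}} \sum_{S \subseteq A} x^{\alpha_S}, \qquad \alpha_S := \sum_{a \in S} 2^a,
\]
and analogously $\prod_{b \in B} Y_b = 2^{-|B|}\sum_{T \subseteq B} y^{\beta_T}$ with $\beta_T := \sum_{b \in T} 2^b$. A routine induction from $[x,y]=z$ and the centrality of $z$ gives $x^{\alpha}y^{\beta} = y^{\beta}x^{\alpha}z^{\alpha\beta}$, so the commutator in the theorem becomes
\[
\Bigl(\prod_{a\in A} X_a\Bigr)\Bigl(\prod_{b\in B} Y_b\Bigr) - \Bigl(\prod_{b\in B} Y_b\Bigr)\Bigl(\prod_{a\in A} X_a\Bigr) = \frac{1}{2^{|A|+|B|}} \sum_{S,T} y^{\beta_T} x^{\alpha_S} (z^{\alpha_S \beta_T} - e).
\]

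The crucial ingredient is a telescoping identity. Using $Z^d = 2^{-d}\sum_{c=0}^{2^d - 1} z^c$ and the group-algebra factorization $z^m - e = (z-e)\sum_{c=0}^{m-1} z^c$ together with $(z-e)Z^d = (z^{2^d}-e)/2^d$, one verifies directly that
\[
(z^m - e)\, Z^d = \frac{z^{2^d}-e}{2^d}\,\sum_{c=0}^{m-1} z^c
\]
for every $m \ge 0$, so that for any isometric representation $\pi$ of $\rm H_3(\mathbb{Z})$,
\[
\Bigl\Vert \pi\bigl((z^m - e)Z^d\bigr)\Bigr\Vert \le \frac{2m}{2^d}
\]
(using $\Vert \pi(z^{2^d})-I\Vert \le 2$). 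This is where the dyadic construction of $Z^d$ pays off, converting the trivial bound $2$ into a gain proportional to $m/2^d$.

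To assemble, I would multiply the commutator expansion by the central element $Z^d$, apply $\pi$, and use the triangle inequality together with the isometry of each $\pi(y^{\beta_T}x^{\alpha_S})$ to obtain
\[
\Bigl\Vert \pi\bigl((X_A Y_B - Y_B X_A)Z^d\bigr)\Bigr\Vert \le \frac{1}{2^{|A|+|B|}} \sum_{S,T} \frac{2\,\alpha_S \beta_T}{2^d}.
\]
A double count (each $a \in A$ belongs to exactly $2^{|A|-1}$ subsets $S \subseteq A$) gives $\sum_{S\subseteq A} \alpha_S = 2^{|A|-1}\sum_{a\in A} 2^a \le 2^{|A|+\max A}$, with the analogous bound for $B$. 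Substituting and cancelling powers of $2$ produces $2\cdot 2^{\max A + \max B - d}$, comfortably inside the claimed $8(1/2)^{d-\max A-\max B}$; the hypothesis of the theorem is precisely what keeps this estimate nontrivial (and in particular ensures $\alpha_S \beta_T \le 2^d$ so that the telescoping bound is sharp throughout the sum). The \emph{in particular} statement then follows by specializing $A=\{0,\ldots,d_1-1\}$ and $B=\{0,\ldots,d_2-1\}$ (so $\max A + \max B = d_1 + d_2 - 2$) and taking the supremum over $(\pi,\B) \in \mathcal{U}(\mathcal{E})$. I do not anticipate any real obstacle: the only substantive step is the telescoping identity for $(z^m-e)Z^d$, and everything else is combinatorial bookkeeping built on the Heisenberg commutation $xy = yxz$.
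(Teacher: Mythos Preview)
Your proposal is correct and follows essentially the same route as the paper: expand the commutator term by term via $x^{\alpha}y^{\beta}=y^{\beta}x^{\alpha}z^{\alpha\beta}$, then bound each $\pi\bigl((z^{m}-e)Z^{d}\bigr)$ by $2m/2^{d}$ using the same telescoping identity (the paper writes it as $(e-z^{m})Z^{d}=2^{-d}\bigl(\sum_{k=0}^{m-1}z^{k}-\sum_{k=2^{d}}^{2^{d}+m-1}z^{k}\bigr)$, which is your identity rearranged). The only difference is in the final estimation: the paper bounds each exponent uniformly by $2^{\max A+\max B+2}$ and reads off the constant $8$, whereas you sum $\sum_{S,T}\alpha_{S}\beta_{T}$ exactly and obtain the sharper constant $2$. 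One small remark: your parenthetical that the hypothesis ``ensures $\alpha_{S}\beta_{T}\le 2^{d}$ so that the telescoping bound is sharp'' is not needed---the bound $2m/2^{d}$ is valid for all $m\ge 0$ regardless, and the hypothesis only serves to make the conclusion nontrivial.
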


\begin{proof}
We note that 
\begin{align*}
\left( \prod_{a \in A} X_a \right) \left( \prod_{b \in B} Y_b \right) - \left( \prod_{b \in B} Y_b \right) \left( \prod_{a \in A} X_a \right)  = \\
\frac{1}{2^{\vert A \vert + \vert B \vert}} \sum_{f \in \lbrace 0,1 \rbrace^A,  h \in \lbrace 0,1 \rbrace^B}  x^{\sum_{k \in A} f(k) 2^k} y^{\sum_{l \in B} h(l) 2^l}  - y^{\sum_{l \in B} h(l) 2^l} x^{\sum_{k \in A} f(k) 2^k}  = \\
\frac{1}{2^{\vert A \vert + \vert B \vert}} \sum_{f \in \lbrace 0,1 \rbrace^A,  h \in \lbrace 0,1 \rbrace^B}  x^{\sum_{k \in A} f(k) 2^k} y^{\sum_{l \in B} h(l) 2^l}  (e-z^{(\sum_{k \in A} f(k) 2^k ) (\sum_{l \in B} h(l) 2^l)}). 
\end{align*}

Note that for every $f \in \lbrace 0,1 \rbrace^A,  h \in \lbrace 0,1 \rbrace^B$ it holds that 
$$(\sum_{k \in A} f(k) 2^k ) (\sum_{l \in B} h(l) 2^l) \leq 2^{\max A + \max B +2}.$$
Thus, it is enough to show that for every $1 \leq m \leq 2^{\max A + \max B +2}$ it holds that 
$$\Vert \pi ((e-z^m) Z^d) \Vert \leq \frac{2m}{2^d} \leq \left( \frac{1}{2} \right)^{d-\max A - \max B -3},$$
but this follows immediately from the fact that 
$$(e-z^m) Z^d = \frac{1}{2^d} \left(\sum_{k=0}^{m-1} z^k - \sum_{k=2^d}^{2^d+m-1} z^k \right).$$ 
\end{proof}

\begin{lemma}
\label{product inequality lemma}
Let $\delta_0 : (0,2] \rightarrow (0,1]$ be a function.  There is a constant $0 \leq r_0  <1$ such that for every $(\pi, \B) \in \mathcal{U} (\mathcal{E}_{uc} (\delta_0) )$,  every  $k, m \in \mathbb{N} \cup \lbrace 0 \rbrace$ and every  $\zeta \in \B$,  if $\Vert \pi (e-z^{m} ) \zeta \Vert \geq \frac{1}{2} \Vert \zeta \Vert$,  then 
$$\left\Vert \pi \left( \frac{e+x z^k}{2} \frac{e+y^m}{2} \right) \zeta \right\Vert \leq r_0 \Vert \zeta \Vert.$$
%$$\frac{e+y^{l} z^m}{2} \frac{e+x^{k}}{2} =  \frac{1}{2} \left( \frac{e+y^{l} z^m}{2} \right) + \frac{x^{k}}{2} \left( \frac{e+y^{l} z^{m - kl}}{2} \right).$$
\end{lemma}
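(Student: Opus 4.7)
My plan is to reduce the lemma to Corollary~\ref{uc ineq coro} applied with the commuting operators $T = \pi(xz^k)$ and $S = \pi(z^m)$, with threshold $\varepsilon = 1/2$. Centrality of $z$ in $\rm H_3(\mathbb{Z})$ makes $T$ and $S$ commute, and the hypothesis $\Vert\pi(e-z^m)\zeta\Vert \geq \tfrac{1}{2}\Vert\zeta\Vert$ is precisely the hypothesis $\Vert(I-S)\zeta\Vert \geq \varepsilon\Vert\zeta\Vert$ demanded by that corollary. So the corollary furnishes a constant $r_0 = r_0(\delta_0, 1/2) < 1$ with
\[
\tfrac{1}{2}\left\Vert\tfrac{I+T}{2}\zeta\right\Vert + \tfrac{1}{2}\left\Vert\tfrac{I+TS}{2}\zeta\right\Vert \leq r_0\Vert\zeta\Vert.
\]
What remains is to show that the left-hand side of the lemma is bounded by this average.

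The key algebraic manoeuvre uses the Heisenberg relation $xy^m = y^m x z^m$. Writing $A = \pi(xz^k)$ and $B = \pi(y^m)$, and using centrality of $z$, one checks $AB = \pi(y^m x z^{m+k}) = BSA$, which yields the asymmetric factorization
\[
(I+A)(I+B) = (I+A) + B(I+SA).
\]
Dividing by $4$ and using that $B$ is a linear isometry, the triangle inequality gives
\[
\left\Vert\pi\!\left(\tfrac{e+xz^k}{2}\cdot\tfrac{e+y^m}{2}\right)\zeta\right\Vert \leq \tfrac{1}{2}\left\Vert\tfrac{I+A}{2}\zeta\right\Vert + \tfrac{1}{2}\left\Vert\tfrac{I+SA}{2}\zeta\right\Vert.
\]
Since $A$ and $S$ commute, $TS = AS = SA$, so the right-hand side is exactly the expression controlled by the corollary, and the bound $r_0\Vert\zeta\Vert$ follows immediately.

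The only piece of real content is spotting the factorization $(I+A)(I+B) = (I+A) + B(I+SA)$: this asymmetric decomposition is precisely what converts the mixed product into a symmetric average to which Corollary~\ref{uc ineq coro} directly applies, using that $B$ is an isometry so that the ``extra'' $B$ can be deleted inside the norm. Everything else (centrality of $z$, the Heisenberg identity, matching the hypothesis with the universal choice $\varepsilon = 1/2$) is routine.
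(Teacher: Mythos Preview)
Your proof is correct and follows essentially the same approach as the paper's: both reduce to Corollary~\ref{uc ineq coro} with $T=\pi(xz^k)$, $S=\pi(z^m)$, $\varepsilon=\tfrac{1}{2}$, via the same Heisenberg identity (the paper writes it as $y^{-m}xy^m z^k = xz^k z^m$ in the group algebra, you write it as $AB=BSA$ at the operator level). The factorization $(I+A)(I+B)=(I+A)+B(I+SA)$ is exactly the paper's computation $\tfrac{e+xz^k}{2}\tfrac{e+y^m}{2}=\tfrac12\bigl(\tfrac{e+xz^k}{2}\bigr)+\tfrac{y^m}{2}\bigl(\tfrac{e+xz^kz^m}{2}\bigr)$.
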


\begin{proof}
We will show that the needed inequality holds for $r_0 = r_0 (\delta_0, \frac{1}{2})$ where this is the constant of Corollary \ref{uc ineq coro}.

Fix $(\pi, \B) \in \mathcal{U} (\mathcal{E}_{uc} (\delta_0) )$,  $k,m \in \mathbb{N} \cup \lbrace 0 \rbrace$ and $\zeta \in \B$ such that $\Vert \pi (e-z^{m} ) \zeta \Vert \geq \frac{1}{2} \Vert \zeta \Vert$.

We note that
\begin{align*}
\frac{e+x z^k}{2} \frac{e+y^{m}}{2}  = 
\frac{1}{2} \left( \frac{e+x z^k}{2} \right) + \frac{1}{2} \left( \frac{y^{m}+x y^{m} z^k}{2}  \right) = \\
\frac{1}{2} \left( \frac{e+x z^k}{2} \right) + \frac{y^m}{2} \left( \frac{e+y^{-m} x y^{m} z^k}{2}  \right) = 
 \frac{1}{2} \left( \frac{e+x z^k}{2} \right) + \frac{y^{m}}{2} \left( \frac{e+x z^k z^{m}}{2} \right).
\end{align*}
Thus
\begin{align*}
\left\Vert \pi \left( \frac{e+x z^k}{2} \frac{e+y^m}{2} \right) \zeta \right\Vert \leq 
\frac{1}{2} \left\Vert \pi \left(\frac{e+x z^k}{2}  \right) \zeta \right\Vert + \frac{1}{2} \left\Vert \pi (y^m) \pi \left(\frac{e+x z^k z^m}{2}  \right) \zeta \right\Vert \leq^{\Vert \pi (y^m) \Vert =1} \\
\frac{1}{2} \left\Vert \frac{I+\pi (x z^k)}{2}  \zeta \right\Vert + \frac{1}{2} \left\Vert  \frac{I+ \pi (x z^k) \pi (z^m)}{2}  \zeta \right\Vert.
\end{align*}
Denote $T = \pi (x z^k)$ and $S = \pi (z^m)$.  Note that $T,S \in O (\B)$ are commuting operators and that $\Vert (I-S) \zeta \Vert \geq \frac{1}{2} \Vert \zeta \Vert$.  Thus the conditions of Corollary \ref{uc ineq coro} and it follows that 
$$\frac{1}{2} \left\Vert \frac{I+\pi (x z^k)}{2}  \zeta \right\Vert + \frac{1}{2} \left\Vert  \frac{I+ \pi (x z^k) \pi (z^m)}{2}  \zeta \right\Vert = \frac{1}{2} \left\Vert \frac{I+T}{2} \zeta \right\Vert + \frac{1}{2} \left\Vert \frac{I+TS}{2} \zeta \right\Vert \leq r_0 \Vert \zeta \Vert,$$
as needed.
\end{proof}

\begin{lemma}
\label{X,Y prod ineq 2 lemma}
Let $\delta_0 : (0,2] \rightarrow (0,1]$ be a function.  There is a constant $0 \leq r_1  <1$ such that for every $(\pi, \B) \in \mathcal{U} (\mathcal{E}_{uc} (\delta_0) )$,  every  $n,  m \in \mathbb{N}$ such that $1 \leq m \leq 2^{n-1}$ and every  $\zeta \in \B$,  if $\Vert \pi (e-z^{m} ) \zeta \Vert \geq \frac{1}{2} \Vert \zeta \Vert$,  then 
$$\left\Vert \pi \left( X_0 Y^n \right) \zeta  \right\Vert \leq r_1 \Vert \zeta \Vert.$$
\end{lemma}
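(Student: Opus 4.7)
My approach is to apply Lemma \ref{product inequality lemma} after rewriting $\pi(X_0Y^n)\zeta$ as a sum whose terms can be grouped into pairs of a very specific form. Using the Heisenberg identity $xy^b = y^b\,xz^b$ (which is immediate from $y^{-b}xy^b = xz^b$), I first expand
\[
\pi(X_0 Y^n)\zeta \;=\; \frac{1}{2^{n+1}}\sum_{b=0}^{2^n-1}\bigl(\pi(y^b) + \pi(xy^b)\bigr)\zeta \;=\; \frac{1}{2^n}\sum_{b=0}^{2^n-1}\pi(y^b)\,\pi\!\left(\tfrac{e+xz^b}{2}\right)\zeta.
\]

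The key algebraic observation is the following factorization identity: using $y^m x = xy^m z^{-m}$ together with centrality of $z$, one verifies by direct expansion that
\[
\tfrac{e+xz^b}{2} \;+\; y^m\cdot \tfrac{e+xz^{b+m}}{2} \;=\; \tfrac{1}{2}(e+xz^b)(e+y^m) \;=\; 2\cdot\tfrac{e+xz^b}{2}\cdot \tfrac{e+y^m}{2}.
\]
Hence for any pair of indices of the form $(b,b+m)$ within $\{0,\dots,2^n-1\}$,
\[
\pi(y^b)\pi\!\left(\tfrac{e+xz^b}{2}\right) + \pi(y^{b+m})\pi\!\left(\tfrac{e+xz^{b+m}}{2}\right) \;=\; 2\,\pi(y^b)\,\pi\!\left(\tfrac{e+xz^b}{2}\cdot \tfrac{e+y^m}{2}\right),
\]
and Lemma \ref{product inequality lemma} (applied with $k=b$ and the given $m$, using the hypothesis $\Vert\pi(e-z^m)\zeta\Vert\geq \tfrac12\Vert\zeta\Vert$) yields that this operator applied to $\zeta$ has norm at most $2r_0\Vert\zeta\Vert$, where $r_0=r_0(\delta_0,\tfrac12)$ is the constant from that lemma.

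Next, I pair the indices: partition $\{0,1,\dots,2^n-1\}$ into its residue classes modulo $m$, and within each class $\{r, r+m, r+2m, \dots\}$ pair consecutive elements $(r,r+m),(r+2m,r+3m),\dots$, leaving at most one element unpaired per class. The number $U$ of unpaired indices is therefore at most the number of classes, so $U\leq m\leq 2^{n-1}$, i.e.\ $U/2^n\leq \tfrac12$. Inserting the previous bound for each paired contribution and bounding each unpaired term by $\Vert \zeta\Vert$ (it is a composition of the isometry $\pi(y^b)$ with the averaging operator $\pi(\tfrac{e+xz^b}{2})$), I obtain
\[
\Vert\pi(X_0Y^n)\zeta\Vert \;\leq\; \tfrac{2^n - U}{2^n}\,r_0\,\Vert\zeta\Vert + \tfrac{U}{2^n}\,\Vert\zeta\Vert \;=\; \Bigl(r_0 + (1-r_0)\tfrac{U}{2^n}\Bigr)\Vert\zeta\Vert \;\leq\; \tfrac{1+r_0}{2}\,\Vert\zeta\Vert.
\]
Since $r_0<1$ one has $r_1:=\tfrac{1+r_0}{2}<1$, proving the lemma.

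The main obstacle is spotting the factorization identity in the second paragraph; once it is in hand, the pairing argument is essentially bookkeeping, and the hypothesis $m\leq 2^{n-1}$ is what ensures $U/2^n\leq \tfrac12$, which is precisely the margin needed to turn $r_0<1$ into a bound strictly less than $1$.
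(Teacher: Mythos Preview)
Your proof is correct and follows essentially the same approach as the paper: both arguments pair the index $b$ with $b+m$, use the Heisenberg commutation to produce the product $\frac{e+xz^b}{2}\cdot\frac{e+y^m}{2}$, apply Lemma~\ref{product inequality lemma}, and bound the leftover terms trivially using $m\le 2^{n-1}$, arriving at the same constant $r_1=\frac{1+r_0}{2}$. The only difference is organizational: the paper first splits off a tail of length $k=2^n-2m\lfloor 2^n/(2m)\rfloor$ and then factors the remaining sum as $(\frac{1}{m}\sum y^{b_1})(\frac{1}{t}\sum y^{2mb_2})(\frac{e+y^m}{2})$ before commuting $X_0$ through, whereas you commute $X_0$ through $Y^n$ first and then pair indices within residue classes modulo $m$; your bookkeeping is slightly cleaner but the underlying computation is the same.
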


\begin{proof}
Fix $(\pi, \B) \in \mathcal{U} (\mathcal{E}_{uc} (\delta_0) )$ and $n,m \in \mathbb{N}$ as above.  Let $0 \leq r_0  < 1$ be the constant of Lemma \ref{product inequality lemma}.  We will show that for $r_1 = \frac{r_0 +1}{2}$ the needed inequality holds. 

Fix $\zeta \in \B$ such that $\Vert \pi (e-z^{m} ) \zeta \Vert \geq \frac{1}{2} \Vert \zeta \Vert$.
 
Denote $t =  \lfloor \frac{2^{n}}{2m} \rfloor$ and $k = 2^{n} - 2mt$ and note that $t \geq 1,0 \leq k \leq \min \lbrace 2^{n-1}, 2m-1 \rbrace$ and  $2^{n} = 2mt + k$.  It follows that 
\begin{align*}
Y^n = \frac{1}{2^n} \sum_{b=0}^{2^n-1} y^b =   
\frac{2mt}{2^n} \left(\frac{1}{2mt} \sum_{b=0}^{2mt-1} y^b \right) + \frac{k}{2^n} \left(\frac{1}{k} \sum_{b=2mt}^{2^n-1} y^b \right).
\end{align*}

We claim it is sufficient to prove that 
\begin{equation}
\label{thm prod ineq}
\left\Vert \pi \left( X_0 \left(\frac{1}{2mt} \sum_{b=0}^{2mt-1} y^b \right)  \right) \zeta \right\Vert \leq r_0 \Vert \zeta \Vert.
\end{equation}

Indeed,  note that 
$$\left\Vert \pi \left( X_0 \left(\frac{1}{k} \sum_{b=2mt}^{2^n-1} y^b \right) \right) \right\Vert \leq 1$$
and thus if \eqref{thm prod ineq} holds,  then 
\begin{align*}
\left\Vert \pi \left( X_0 Y^n \right) \zeta  \right\Vert \leq  \frac{2mt}{2^n} \left\Vert \pi \left( X_0 \left( \frac{1}{2mt} \sum_{b=0}^{2mt-1} y^b \right) \right) \zeta \right\Vert + \frac{k}{2^n} \left\Vert \pi \left( X_0 \left( \frac{1}{k} \sum_{b=2mt}^{2^n-1} y^b \right) \right) \zeta  \right\Vert \leq \\
 \left( \frac{2mt}{2^n} r_0 + \frac{k}{2^n} \right) \Vert \zeta \Vert \leq^{k \leq 2^{n-1}} \frac{r_0 + 1}{2} \Vert \zeta \Vert,
\end{align*}
as needed. 

We will finish the proof by proving \eqref{thm prod ineq}.  We note that
\begin{align*}
\frac{1}{2mt} \sum_{b=0}^{2mt-1} y^b  = \left( \frac{1}{m} \sum_{b_1 =0}^{m-1} y^{b_1} \right) \left( \frac{1}{t} \sum_{b_2=0}^{t-1} y^{2m b_2} \right) \left(\frac{e + y^m}{2} \right).
\end{align*}

Thus,
\begin{align*}
X_0 \left( \frac{1}{2mt} \sum_{b=0}^{2mt-1} y^b \right) = \frac{e+x}{2}  \left( \frac{1}{m} \sum_{b_1 =0}^{m-1} y^{b_1} \right) \left( \frac{1}{t} \sum_{b_2=0}^{t-1} y^{2m b_2} \right) \left(\frac{e + y^m}{2} \right) = \\
\frac{1}{m t} \sum_{b_1 =0}^{m-1}  \sum_{b_2=0}^{t-1} y^{b_1 +2m b_2} \frac{e+y^{-(b_1 +2m b_2)} x y^{b_1 +2m b_2}}{2}  \left(\frac{e + y^m}{2} \right) = \\\frac{1}{m t} \sum_{b_1 =0}^{m-1}  \sum_{b_2=0}^{t-1} y^{b_1 +2m b_2} \frac{e+ x z^{b_1 +2m b_2}}{2}  \left(\frac{e + y^m}{2} \right).
\end{align*}

Using the fact that $\Vert \pi (y^{b_1 +2m b_2}) \Vert =1$ for every $b_1, b_2 \in \mathbb{N}$, it follows that
\begin{align*}
& \left\Vert \pi \left( X_0 \left( \frac{1}{2mt} \sum_{b=0}^{2mt-1} y^b \right)  \right) \zeta \right\Vert \leq \\
& \frac{1}{m t} \sum_{b_1 =0}^{m-1}  \sum_{b_2=0}^{t-1} \left\Vert \pi \left( \frac{e+ x z^{b_1 +2m b_2}}{2} \right) \pi  \left(\frac{e + y^m}{2} \right) \zeta \right\Vert \leq^{\text{Lemma } \ref{product inequality lemma}}  
  \frac{1}{m t} \sum_{b_1 =0}^{m-1}  \sum_{b_2=0}^{t-1} r_0 \Vert \zeta \Vert = r_0 \Vert \zeta \Vert,
\end{align*}
and the proof of \eqref{thm prod ineq} is concluded.
\end{proof}

\begin{theorem}
Let $\delta_0 : (0,2] \rightarrow (0,1]$ be a function.  Let $0 \leq r_1  <1$ be the constant given in Lemma \ref{X,Y prod ineq 2 lemma} above.   For every $(\pi, \B) \in \mathcal{U} (\mathcal{E}_{uc} (\delta_0) )$,  every  $n \in \mathbb{N}, n \geq 2$  and every  $\xi \in \B$,  it holds that
$$\left\Vert \pi \left( X_0 Y^n (e-Z_0) \right)  \xi  \right\Vert \leq  \max \left\lbrace r_1  \Vert \pi (e-Z_0) \xi \Vert, \frac{1}{2^{n-2}} \Vert \xi \Vert \right\rbrace.$$
\end{theorem}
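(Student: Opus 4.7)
The plan is a dichotomy on whether the hypothesis of Lemma \ref{X,Y prod ineq 2 lemma} is already available. Write $T := \pi(z)$ and set $\zeta := \pi(e - Z_0)\xi = \tfrac{1}{2}(I - T)\xi$, so that $\pi(X_0 Y^n (e - Z_0))\xi = \pi(X_0 Y^n)\zeta$ and the goal reduces to bounding $\Vert \pi(X_0 Y^n)\zeta \Vert$.

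\emph{Case 1.} If there exists an $m$ with $1 \leq m \leq 2^{n-1}$ satisfying $\Vert (I - T^m)\zeta \Vert \geq \tfrac{1}{2}\Vert \zeta \Vert$, then Lemma \ref{X,Y prod ineq 2 lemma} applies directly and yields $\Vert \pi(X_0 Y^n)\zeta \Vert \leq r_1 \Vert \zeta \Vert = r_1 \Vert \pi(e - Z_0)\xi \Vert$, which is the first entry of the maximum.

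\emph{Case 2.} Otherwise, $\Vert (I - T^m)\zeta \Vert < \tfrac{1}{2}\Vert \zeta \Vert$ for every $1 \leq m \leq 2^{n-1}$, i.e., $\zeta$ is ``almost $T^m$-invariant'' throughout this range. My plan here is to leverage this to show that $\Vert \zeta \Vert$ itself is forced to be small, and then invoke the trivial estimate $\Vert \pi(X_0 Y^n)\zeta \Vert \leq \Vert \zeta \Vert$. The engine is the telescoping identity
$$(I - T^m)\xi \;=\; (I - T)\sum_{k=0}^{m-1} T^k \xi \;=\; \sum_{k=0}^{m-1} T^k (I - T)\xi \;=\; 2\sum_{k=0}^{m-1} T^k \zeta,$$
which holds because $T$ commutes with its own powers and $(I - T)\xi = 2\zeta$. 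Taking $m := 2^{n-1}$ and writing $T^k \zeta = \zeta + (T^k - I)\zeta$, the triangle inequality combined with the Case 2 hypothesis (applied in the range $1 \leq k \leq m-1 \leq 2^{n-1}$) gives
$$\left\Vert \sum_{k=0}^{m-1} T^k \zeta \right\Vert \;\geq\; m \Vert \zeta \Vert - (m-1) \cdot \tfrac{1}{2}\Vert \zeta \Vert \;=\; \tfrac{m+1}{2}\Vert \zeta \Vert.$$
Comparing with the trivial upper bound $\Vert (I - T^m)\xi \Vert \leq 2\Vert \xi \Vert$ applied to the left-hand side of the telescoping identity yields $(m+1)\Vert \zeta \Vert \leq 2\Vert \xi \Vert$, hence $\Vert \zeta \Vert \leq \tfrac{2}{2^{n-1}+1}\Vert \xi \Vert \leq \tfrac{1}{2^{n-2}}\Vert \xi \Vert$. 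Therefore $\Vert \pi(X_0 Y^n)\zeta \Vert \leq \Vert \zeta \Vert \leq \tfrac{1}{2^{n-2}}\Vert \xi \Vert$, the second entry of the maximum.

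In each case one of the two alternatives in the maximum is attained, so the theorem follows. The one step that needs some care is the choice $m = 2^{n-1}$: it has to be large enough that $\tfrac{2}{m+1}$ delivers the target decay $2^{-(n-2)}$, yet small enough that the almost-invariance hypothesis remains available for every $k$ in the telescoping range, which is why Lemma \ref{X,Y prod ineq 2 lemma} (and the Case 2 hypothesis) are stated precisely for $1 \leq m \leq 2^{n-1}$. The rest is routine manipulation.
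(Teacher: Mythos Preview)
Your proof is correct and is essentially the paper's argument run in the contrapositive direction. The paper splits on whether $\Vert \pi(e-Z_0)\xi\Vert$ exceeds $2^{-(n-2)}\Vert\xi\Vert$ and, when it does, uses the same telescoping identity $\bigl(\sum_{k=0}^{M-1}z^k\bigr)(e-z)=e-z^M$ (packaged as ``norm of average $\leq$ average of norms'') to produce an $m$ with $\Vert(I-T^m)\zeta\Vert\geq\tfrac12\Vert\zeta\Vert$; you instead split on the existence of such an $m$ and, when none exists, use the identical telescoping plus a reverse triangle inequality to force $\Vert\zeta\Vert\leq 2^{-(n-2)}\Vert\xi\Vert$. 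The two arguments are logically equivalent and rely on the same ingredients.
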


\begin{proof}
Fix $(\pi, \B) \in \mathcal{U} (\mathcal{E}_{uc} (\delta_0) )$,  $n \in \mathbb{N}, n \geq 2$ and $\xi \in \B$.  If 
$ \Vert \pi (e-Z_0) \xi \Vert \leq \frac{1}{2^{n-2}} \Vert \xi \Vert$,  then 
$$\left\Vert \pi \left( X_0 Y^n  (e-Z_0) \right) \xi  \right\Vert \leq \Vert \pi (e-Z_0) \xi \Vert \leq \frac{1}{2^{n-2}} \Vert \xi \Vert,$$
and we are done. 

Assume that $ \Vert \pi (e-Z_0) \xi \Vert > \frac{1}{2^{n-2}} \Vert \xi \Vert$.  Then 
\begin{align*}
\frac{1}{2^{n-1}} \sum_{m =0}^{2^{n-1}-1} \Vert \pi ((e - z^{m})(e-Z_0)) \xi \Vert \geq 
\left\Vert \pi \left ( \left( e - \frac{1}{2^{n-1}} \sum_{m =0}^{2^{n-1}-1} z^m \right) (e-Z_0) \right) \xi \right\Vert \geq \\
 \Vert \pi (e-Z_0) \xi \Vert - \left\Vert \pi \left( \left( \frac{1}{2^{n-1}} \sum_{m =0}^{2^{n-1}-1} z^m  \right) (e-Z_0) \right) \xi \right\Vert \geq^{e-Z_0 = \frac{e-z}{2}} \\
 \left\Vert \pi (e-Z_0) \xi \right\Vert - \frac{1}{2^{n-1}} \left\Vert  \pi \left( \frac{e- z^{2^{n-1}}}{2} \right)  \xi \right\Vert \geq \\
 \left\Vert \pi (e-Z_0) \xi \right\Vert - \frac{1}{2^{n-1}}  \Vert \xi \Vert >^{ \Vert \pi (e-Z_0) \xi \Vert > \frac{1}{2^{n-2}} \Vert \xi \Vert} \\
  \left\Vert \pi (e-Z_0) \xi \right\Vert  - \frac{1}{2} \left\Vert \pi (e-Z_0) \xi \right\Vert  = \frac{1}{2} \left\Vert \pi (e-Z_0) \xi \right\Vert.
\end{align*}

It follows that for $\zeta = \pi (e-Z_0) \xi$,  there is $1 \leq m \leq 2^{n-1}-1$ such that $\Vert \pi (e-z^m) \zeta \Vert \geq \frac{1}{2} \Vert \zeta \Vert$.  Thus, by Lemma \ref{X,Y prod ineq 2 lemma},  
$$\Vert \pi \left( X_0 Y^n \right) \zeta \Vert \leq r_1 \Vert \zeta \Vert,$$
i.e., 
$$\left\Vert \pi \left( X_0 Y^n (e-Z_0) \right)  \xi  \right\Vert \leq  r_1  \Vert \pi (e-Z_0) \xi \Vert ,$$
as needed.
\end{proof}

\begin{corollary}
\label{X,Y prod ineq final coro}
Let $\delta_0 : (0,2] \rightarrow (0,1]$ be a function. Let $0 \leq r_1  <1$ be the constant given in Lemma \ref{X,Y prod ineq 2 lemma} above.   For every $(\pi, \B) \in \mathcal{U} (\mathcal{E}_{uc} (\delta_0) )$,  every  $n \in \mathbb{N}, n \geq 2$,   every $a_0,b_0 \in \mathbb{N} \cup \lbrace 0 \rbrace$  and every  $\xi \in \B$,  it holds that
$$\left\Vert \pi \left( \left( X_{a_0} \left( \prod_{b= 0}^{n-1} Y_{b_0+b} \right)\right) (e- Z_{a_0 + b_0}) \right) \xi \right\Vert \leq \max \left\lbrace r_1  \Vert \pi (e-Z_{a_0 + b_0}) \xi \Vert, \frac{1}{2^{n-2}} \Vert \xi \Vert \right\rbrace.$$
\end{corollary}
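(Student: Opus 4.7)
My plan is to deduce this corollary from the preceding theorem via a rescaling endomorphism of the Heisenberg group, exploiting the fact that the Heisenberg relations are self-similar under substitutions of the form $(x,y,z) \mapsto (x^{2^{a_0}}, y^{2^{b_0}}, z^{2^{a_0+b_0}})$. Centrality of $z$ is preserved trivially, and iterating the identity $y^{-k}x^m y^k = x^m z^{km}$ recorded in the preliminaries gives $[x^{2^{a_0}}, y^{2^{b_0}}] = z^{2^{a_0}\cdot 2^{b_0}} = z^{2^{a_0+b_0}}$, so the new triple satisfies the same defining relations as $(x,y,z)$.

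Concretely, I would define a group endomorphism $\varphi: \mathrm{H}_3(\mathbb{Z}) \to \mathrm{H}_3(\mathbb{Z})$ by $x \mapsto x^{2^{a_0}}$, $y \mapsto y^{2^{b_0}}$, $z \mapsto z^{2^{a_0+b_0}}$ (well-defined by the observation above and the universal property of $\mathrm{H}_3(\mathbb{Z})$) and set $\tilde{\pi} = \pi \circ \varphi$. Then $\tilde{\pi}$ is a continuous isometric linear representation of $\mathrm{H}_3(\mathbb{Z})$ on the same space $\B$, and hence $(\tilde{\pi},\B) \in \mathcal{U}(\mathcal{E}_{uc}(\delta_0))$. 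Unwinding the definitions,
\[
\tilde{\pi}(X_0) = \pi(X_{a_0}), \qquad \tilde{\pi}(Y_b) = \pi(Y_{b_0+b}) \text{ for every } b \geq 0, \qquad \tilde{\pi}(Z_0) = \pi(Z_{a_0+b_0}),
\]
and in particular $\tilde{\pi}(Y^n) = \tilde{\pi}\bigl(\prod_{b=0}^{n-1} Y_b\bigr) = \pi\bigl(\prod_{b=0}^{n-1} Y_{b_0+b}\bigr)$ by Observation \ref{product identity obs}.

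Applying the preceding theorem to $\tilde{\pi}$ and the same vector $\xi$ then translates, term by term, into precisely the bound asserted by the corollary, with the same constant $r_1$ since $r_1$ is a function of $\delta_0$ alone. I do not foresee any real obstacle: the only non-routine step is verifying that $\varphi$ respects the Heisenberg relations, which is the single commutator computation above; isometry and continuity of $\tilde{\pi}$ are automatic from those of $\pi$, and the translation of operator norms is immediate because $\pi$ and $\pi \circ \varphi$ act on the same Banach space.
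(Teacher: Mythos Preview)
Your proposal is correct and is essentially identical to the paper's own proof: the paper defines the same map $\Phi(x)=x^{2^{a_0}}$, $\Phi(y)=y^{2^{b_0}}$ (phrased as an isomorphism onto the subgroup $\langle x^{2^{a_0}}, y^{2^{b_0}}\rangle$ rather than as an endomorphism), verifies $\Phi(z)=z^{2^{a_0+b_0}}$, sets $\pi_0=\pi\circ\Phi$, and applies the preceding theorem exactly as you describe.
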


\begin{proof}
Fix $(\pi, \B),  n, a_0,b_0$ as above. 

Let $H < H_3 (\mathbb{Z})$ be the subgroup $H = \langle x^{2^{a_0}},  y^{2^{b_0}} \rangle$.  
We note that $H$ is isomorphic to $H_3 (\mathbb{Z})$ via the isomorphism $\Phi :  H_3 (\mathbb{Z}) \rightarrow H$ induced by $\Phi (x) = x^{2^{a_0}},  \Phi (y) = y^{2^{b_0}}$.  Note that (by extending $\Phi$ linearly)
$$\Phi (X_0) = X_{a_0},  \Phi (Y^n) = \prod_{b= 0}^{n-1} Y_{b_0+b}.$$
Also note that
$$\Phi (z) = \Phi (x^{-1} y^{-1} x y) = x^{-2^{a_0}} y^{- 2^{b_0}} x^{2^{a_0}}  y^{ 2^{b_0}} = z^{ 2^{a_0 + b_0}},$$
and thus $\Phi (Z_0) = Z_{a_0 + b_0}$.   

Define a new representation $(\pi_0, \B)$ of $H_3 (\mathbb{Z})$ by $\pi_0 = \pi \circ \Phi$.  Let $\xi \in \B$,  then
\begin{align*}
\left\Vert \pi \left( \left( X_{a_0} \left( \prod_{b=0}^{n-1} Y_{b_0+b} \right) \right) (e- Z_{a_0 + b_0}) \right) \xi \right\Vert = 
 \left\Vert \pi_0 \left( \left( X_{0} Y^n \right) (e- Z_{0}) \right) \xi \right\Vert \leq \\
  \max \left\lbrace r_1  \Vert \pi_0 (e-Z_{0}) \xi \Vert, \frac{1}{2^{n-2}} \Vert \xi \Vert \right\rbrace = 
 \max \left\lbrace r_1  \Vert \pi (e-Z_{a_0 + b_0}) \xi \Vert, \frac{1}{2^{n-2}} \Vert \xi \Vert \right\rbrace ,  
\end{align*}
as needed.
\end{proof}

\begin{theorem}
\label{main ineq thm}
Let $\delta_0 : (0,2] \rightarrow (0,1]$ be a function.  There are constants $0 \leq r_2 <1, C>0$ such that for  every $d_1, d_2, d_3 \in \mathbb{N}$ with $d_1, d_2 \leq d_3,  d_1 + d_2 \geq d_3$,  it holds that 
$$\left\Vert X^{d_1} Y^{d_2} \left( Z^{d_3} - Z^{d_3+1} \right)  \right\Vert_{\mathcal{U} (\mathcal{E}_{uc} (\delta_0))} \leq C r_2^{\sqrt{d_1+d_2 - d_3}}.$$
\end{theorem}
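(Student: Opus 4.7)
The plan is to prove the theorem by induction on $N := d_1+d_2-d_3 \ge 0$, using iterated applications of Corollary~\ref{X,Y prod ineq final coro} as the main decay mechanism and Theorem~\ref{change of order thm} to handle commutation errors.

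\emph{Reduction.} Because $Z^{d_3}-Z^{d_3+1}=Z^{d_3}(e-Z_{d_3})$ and $z$ is central (so $\pi(Z^{d_3})$ is a contraction and commutes with everything), it suffices to prove the bound
$$
\left\Vert X^{d_1}Y^{d_2}(e-Z_{d_3})\right\Vert_{\mathcal{U}(\mathcal{E}_{uc}(\delta_0))} \le C\, r_2^{\sqrt{N}}.
$$
For $N$ below some threshold $N_0$, the trivial bound $\Vert\cdot\Vert\le 2$ handles the base case by choosing $C$ large enough.

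\emph{Iteration of Corollary~\ref{X,Y prod ineq final coro}.} The key structural observation is that, since $z$ is central in $\rm H_3(\mathbb{Z})$, the operator $\pi(e-Z_{d_3})$ commutes with every $\pi(X_a)$ and $\pi(Y_b)$. Consequently, if I pick pairs $(a_1,b_1),\ldots,(a_k,b_k)$ with $a_i+b_i=d_3$, $a_i\in[d_3-d_2+1,\,d_1-1]$, and block-lengths $n_i\ge 2$ so that the $Y$-blocks $[b_i,b_i+n_i-1]$ form pairwise disjoint subsets of $[0,d_2-1]$, then at each step $i$ the vector produced by the previous step can still be written as $\pi(e-Z_{d_3})\xi_i'$ with $\Vert\xi_i'\Vert\le\Vert\xi\Vert$. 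Corollary~\ref{X,Y prod ineq final coro} can therefore be reapplied, yielding inductively
$$
\left\Vert \pi(\Theta_k)\pi(e-Z_{d_3})\xi\right\Vert \le r_1^{k}\Vert\pi(e-Z_{d_3})\xi\Vert + \sum_{i=1}^{k} r_1^{k-i}\,2^{-(n_i-2)}\Vert\xi\Vert,
$$
where $\Theta_k=\bigl(X_{a_k}\prod_{b=0}^{n_k-1}Y_{b_k+b}\bigr)\cdots\bigl(X_{a_1}\prod_{b=0}^{n_1-1}Y_{b_1+b}\bigr)$. Disjointness of the $Y$-blocks together with the range constraint on $a_i$ force $\sum n_i \le N-1$, so taking each $n_i$ of order $\sqrt{N}$ (large enough that the $2^{-(n_i-2)}$ error terms are negligible compared to $r_1^{k}$) bounds the number of iterations by $k\le c\sqrt{N}$ for some constant $c$. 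The resulting decay $r_1^{c\sqrt{N}}$ is of the claimed form $r_2^{\sqrt{N}}$ for $r_2:=r_1^{c}\in(0,1)$.

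\emph{The main obstacle.} The iteration controls $\pi(\Theta_k)\pi(e-Z_{d_3})$ rather than the original operator $\pi(X^{d_1}Y^{d_2})\pi(e-Z_{d_3})$. Since the $X_a$'s and $Y_b$'s do not commute, these two operators differ by a sum of terms of the form $X^{\mathrm{rest}}[X_{a_j},Y^{\mathrm{sub}}]Y^{\mathrm{rest}}$ for various partial blocks $Y^{\mathrm{sub}}$. I expect the technical heart of the proof to be estimating the contributions of these commutators when multiplied by $(e-Z_{d_3})$. Since Theorem~\ref{change of order thm} bounds such commutators only when weighted by a factor $Z^{d}$, I would decompose the identity as $e=\pi(Z^{d'})+\pi(e-Z^{d'})$ for a suitable $d'>d_3$ (of order $d_3+\sqrt{N}$), apply Theorem~\ref{change of order thm} to the $Z^{d'}$-weighted piece, and bound the $(e-Z^{d'})$-weighted piece directly using the expansion $\pi(e-Z^{d'})=\frac{1}{2^{d'}}\sum_{c=0}^{2^{d'}-1}\pi(e-z^c)$ together with a telescoping estimate. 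Balancing $d'$, the block lengths $n_i$, and the number $k$ of iterations, all of order $\sqrt{N}$, absorbs the commutator errors into the dominant term $C\, r_2^{\sqrt{N}}$, completing the induction.
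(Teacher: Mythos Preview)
Your high-level plan---iterate Corollary~\ref{X,Y prod ineq final coro} roughly $\sqrt{N}$ times with $Y$-blocks of length $\sim\sqrt{N}$, and control the rearrangement errors with Theorem~\ref{change of order thm}---is exactly the paper's strategy. The gap is in how you handle the commutation errors, and it traces back to your first reduction step. You write $Z^{d_3}-Z^{d_3+1}=Z^{d_3}(e-Z_{d_3})$ and immediately discard $Z^{d_3}$ as a harmless contraction. The paper does the opposite: it \emph{keeps} $Z^{d_3}$ throughout, because that factor is precisely the weight required for Theorem~\ref{change of order thm} to control the commutators arising from rearranging the $X$- and $Y$-blocks. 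Concretely, at step $i$ the paper commutes a tail $\prod_{a\in A_{i+1}}X_a$ past a short block $\prod_{b\in B_i'}Y_b$; the index sets are chosen so that $\max A_{i+1}+\max B_i'=d_3-t/2-1$ (with $t\sim\sqrt{N}$), and Theorem~\ref{change of order thm}, applied with the retained $Z^{d_3}$, gives an error $\lesssim 2^{-t/2}$. One application of Corollary~\ref{X,Y prod ineq final coro} then yields the recursion $I_i\le 36\,(1/\sqrt 2)^t\Vert\xi\Vert+r_1 I_{i+1}$, which unwinds to the claimed bound.

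Having thrown $Z^{d_3}$ away, you try to recover it via $e=Z^{d'}+(e-Z^{d'})$. The $Z^{d'}$-weighted part is fine, but the $(e-Z^{d'})$-part is not: the expansion $e-Z^{d'}=2^{-d'}\sum_{c}(e-z^c)$ gives only the trivial bound $\Vert e-Z^{d'}\Vert\le 2$, and there is no telescoping identity that makes $[X_{a_j},Y^{\mathrm{sub}}](e-Z^{d'})$ (or its product with $e-Z_{d_3}$) small. Worse, for Theorem~\ref{change of order thm} to give decay of order $2^{-\sqrt N}$ on the $Z^{d'}$-part you need $d'-a_j-\max(Y^{\mathrm{sub}})\gtrsim\sqrt N$; with the generic indices you allow ($a_j\le d_1-1$, $\max(Y^{\mathrm{sub}})\le d_2-1$) this forces $d'\ge d_1+d_2-2+\sqrt N=d_3+N-2+\sqrt N$, not $d_3+\sqrt N$ as you assert. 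The fix is simply not to discard $Z^{d_3}$, and then to choose the index sets carefully (an arithmetic progression of $X$-indices with spacing $t$, and $Y$-blocks of length $t/2$ placed so that at each step the pair being commuted satisfies $\max A+\max B\le d_3-2$), exactly as in the paper.
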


\begin{proof}
Let $r_1 = r_1 (\delta_0)$ be the constant of Corollary \ref{X,Y prod ineq final coro}.  We will prove that the inequality stated above holds for $r_2 = \max \lbrace r_1, \frac{1}{\sqrt{2}} \rbrace$.  

Fix $d_1, d_2, d_3 \in \mathbb{N}$ as above and denote $t = 2 \lfloor \frac{\sqrt{d_1 + d_2 -d_3}}{2} \rfloor$.  We note that $t \leq \sqrt{d_1 + d_2 -d_3} \leq t+2$ and thus it is enough to prove that there is a constant $C '$ such that 
$$\left\Vert X^{d_1} Y^{d_2} \left( Z^{d_3} - Z^{d_3+1} \right)  \right\Vert_{\mathcal{U} (\mathcal{E}_{uc} (\delta_0))} \leq C ' r_2^{t}.$$

Without loss of generality,  we can assume that $t \geq 4$ (for $t <4$,  the constant $C '$ can be chosen to be large enough such that $C ' r_2^{4} \geq 2$ and the needed inequality holds trivially).  

Denote
$$A_0 = \lbrace d_1-1 - k t : 0 \leq k \leq t-1 \rbrace.$$
Note that for every $0 \leq k \leq t-1 $
\begin{align*}
& d_1 - 1 \geq d_1-1 - k t > d_1-1 - t^2  +t  \geq^{t \leq \sqrt{d_1 + d_2 -d_3}} \\
& d_1-1 -(d_1+d_2-d_3) + t = d_3-d_2+t-1 \geq^{d_3 \geq d_2,  t \geq 4} 0,
\end{align*}
and thus $A_0 \subseteq \lbrace 0,...,d_1-1 \rbrace$.  

Also,  denote
$$B_0 = \bigcup_{k=0}^{t-1} \left\lbrace d_3-d_1+1 + k t + j : 0 \leq j \leq  \frac{t}{2}  -1 \right\rbrace,$$
(note that $t$ is always even and thus  $\frac{t}{2} \in \mathbb{N}$).   Note that for every $0 \leq k \leq t-1$ and every $0 \leq j \leq  \frac{t}{2}  -1$ it holds that
$$d_3-d_1+1 + k t + j \geq d_3-d_1 +1 \geq^{d_3 \geq d_1} 0$$
and
\begin{align*}
 d_3-d_1+1 + k t + j \leq d_3-d_1 +1 + t^2 - \frac{t}{2} \leq^{t \leq \sqrt{d_1+d_2-d_3}} d_3-d_1+1 + (d_1+d_2-d_3) - \frac{t}{2} = \\
  d_2+1- \frac{t}{2} \leq^{t \geq 4} d_2 -1.
\end{align*}

It follows that  $B_0 \subseteq \lbrace 0,...,d_2 -1 \rbrace$.

Thus,
\begin{align*}
& \left\Vert X^{d_1} Y^{d_2} \left( Z^{d_3} - Z^{d_3+1} \right)  \right\Vert_{\mathcal{U} (\mathcal{E}_{uc} (\delta_0))} \leq \\
& \left\Vert \left( \prod_{a \in \lbrace 0,...,d_1-1 \rbrace \setminus A_0} X_a \right) \right\Vert_{\mathcal{U} (\mathcal{E}_{uc} (\delta_0))} \left\Vert \left( \prod_{a \in A_0} X_a \right)  Y^{d_2} \left( Z^{d_3} - Z^{d_3+1} \right)  \right\Vert_{\mathcal{U} (\mathcal{E}_{uc} (\delta_0))}  \leq \\
& \left\Vert \left( \prod_{a \in A_0} X_a \right)  \left( \prod_{b \in B_0} Y_{b} \right) \left( Z^{d_3} - Z^{d_3+1} \right)  \right\Vert_{\mathcal{U} (\mathcal{E}_{uc} (\delta_0))}  \left\Vert \left( \prod_{b \in  \lbrace 0,...,d_2 -1 \rbrace \setminus B_0} Y_{b} \right) \right\Vert_{\mathcal{U} (\mathcal{E}_{uc} (\delta_0))}  \leq \\
& \left\Vert \left( \prod_{a \in A_0} X_a \right)  \left( \prod_{b \in B_0} Y_{b} \right) \left( Z^{d_3} - Z^{d_3+1} \right)  \right\Vert_{\mathcal{U} (\mathcal{E}_{uc} (\delta_0))}.
\end{align*}

It follows that it is enough to prove that there is a constant $C'$ such that 
\begin{equation*}
\left\Vert \left( \prod_{a \in A_0} X_a \right)  \left( \prod_{b \in B_0} Y_b \right) \left( Z^{d_3} - Z^{d_3+1} \right)  \right\Vert_{\mathcal{U} (\mathcal{E}_{uc} (\delta_0))} \leq C'  r_2^{t},
\end{equation*}
i.e.,  it is enough to prove that for every $(\pi, \B) \in \mathcal{E}_{uc} (\delta_0)$ and every $\xi \in \B$, it holds that
\begin{equation}
\label{reduction ineq}
\left\Vert  \pi \left( \left( \prod_{a \in A_0} X_a \right)  \left( \prod_{b \in B_0} Y_b \right) \left( Z^{d_3} - Z^{d_3+1} \right) \right) \xi  \right\Vert \leq C'  r_2^{t} \Vert \xi \Vert.
\end{equation}

Fix $(\pi, \B) \in \mathcal{E}_{uc} (\delta_0)$ and $\xi \in \B$.   For $1 \leq i \leq t  -1$,  define the sets
$$A_i = \lbrace d_1-1 - k t : i \leq k \leq t-1 \rbrace,$$
$$B_i = \bigcup_{k=i}^{t-1} \left\lbrace d_3-d_1+1 + k t + j : 0 \leq j \leq  \frac{t}{2}  -1 \right\rbrace.$$
Also define $A_t = B_t = \emptyset$.  For $0 \leq i \leq t$ denote
$$I_i = \left\Vert  \pi \left( \left( \prod_{a \in A_i} X_a \right)  \left( \prod_{b \in B_i} Y_b \right) \left( Z^{d_3} - Z^{d_3+1} \right) \right) \xi  \right\Vert.$$

We claim that in order to prove \eqref{reduction ineq}, it is sufficient to show that for every $0 \leq i \leq t-1$ it holds that 
\begin{equation}
\label{final reduction ineq}
I_{i} \leq 36  \left( \frac{1}{\sqrt{2}} \right)^{t} \Vert \xi \Vert +  r_1 I_{i+1},
\end{equation}
where $r_1$ is the constant of Corollary \ref{X,Y prod ineq final coro}.  Indeed,  if \eqref{final reduction ineq} holds,  then 
\begin{align*}
& I_0 \leq 36  \left( \frac{1}{\sqrt{2}} \right)^{t} \Vert \xi \Vert (1+r_1+...+r_1^{t-1}) + r_1^{t} I_t \leq \\
& \frac{36}{1-r_1}  \left( \frac{1}{\sqrt{2}} \right)^{t} \Vert \xi \Vert + r_1^{t} \Vert \pi (Z^{d_3} - Z^{d_3+1}) \xi \Vert \leq \\
& \frac{36}{1-r_1}  \left( \frac{1}{\sqrt{2}} \right)^{t} \Vert \xi \Vert + 2 r_1^{t} \Vert \xi \Vert \leq \left(\frac{36}{1-r_1} + 2 \right) r_2^{t},
\end{align*}
as needed.

We are left to prove \eqref{final reduction ineq}.  Fix $0 \leq i \leq t-1$.  For $0 \leq k \leq t-1$,  denote 
$$B_k ' =  \left\lbrace d_3-d_1+1 + k t + j : 0 \leq j \leq  \frac{t}{2}  -1 \right\rbrace,$$
thus $B_i = \bigcup_{k=i}^{t-1} B_k '$.  We note that 
$$d_3-\max A_{i+1} - \max B_i ' = d_3-(d_1-1- (i+1)t) - (d_3-d_1+it+\frac{t}{2}) = \frac{t}{2} +1.$$
Therefore,  by Theorem \ref{change of order thm},
\begin{align*}
\left\Vert \pi \left(  \left( \left( \prod_{a \in A_{i+1}} X_a \right)  \left( \prod_{b \in B_i '}  Y_{b} \right) -  \left( \prod_{b \in B_i '}  Y_{b} \right) \left( \prod_{a \in A_{i+1}} X_a \right) \right)   Z^{d_3}  \right) \right\Vert \leq \\ 
8 \left( \frac{1}{2} \right)^{d_3-\max A_{i+1} - \max B_i '}  = 
16  \left( \frac{1}{\sqrt{2}} \right)^{t}.
\end{align*}

Thus, 
\begin{align*}
\left\Vert \pi \left(  \left( \prod_{a \in A_{i}} X_a \right)  \left( \prod_{b \in B_i}  Y_{b} \right) Z^{d_3} -   \left( X_{d_1-1 - i t} \left( \prod_{b \in B_i '}  Y_{b}  \right) \right) \left( \prod_{a \in A_{i+1}} X_a \right)  \left( \prod_{b \in B_{i+1}} Y_b \right) Z^{d_3}  \right) \right\Vert \leq  \\
16  \left( \frac{1}{\sqrt{2}} \right)^{t}.
\end{align*}

By Observation \ref{product identity obs}, $Z^{d_3+1} = Z^{d_3} Z_{d_3}$ and thus 
$Z^{d_3}-Z^{d_3+1} = Z^{d_3} (e-Z_{d_3})$.  Using this and the inequality stated above,  we can deduce
\begin{dmath*}
I_i = \left\Vert  \pi \left( \left( \prod_{a \in A_i} X_a \right)  \left( \prod_{b \in B_i} Y_b \right) Z^{d_3} \left( e - Z_{d_3} \right) \right) \xi  \right\Vert \leq 
\left\Vert  \pi \left( \left( X_{d_1-1 - i t} \left( \prod_{b \in B_i '}  Y_{b}  \right) \right) \left( \prod_{a \in A_{i+1}} X_a \right)  \left( \prod_{b \in B_{i+1}} Y_b \right) Z^{d_3} (e-Z_{d_3})  \right) \xi  \right\Vert 
+ {16  \left( \frac{1}{\sqrt{2}} \right)^{t} \Vert \pi \left(e- Z_{d_3} \right) \xi \Vert} \leq 
\left\Vert  \pi \left( \left( X_{d_1-1 - i t} \left( \prod_{b \in B_i '}  Y_{b}  \right) (e-Z_{d_3}) \right) \left( \prod_{a \in A_{i+1}} X_a \right)  \left( \prod_{b \in B_{i+1}} Y_b \right) Z^{d_3} \right) \xi  \right\Vert + 32  \left( \frac{1}{\sqrt{2}} \right)^{t} \Vert \xi \Vert = \\
\left\Vert  \pi \left( X_{d_1-1 - i t} \left( \prod_{b \in B_i '}  Y_{b}  \right) (e-Z_{d_3}) \right) \pi \left( \left( \prod_{a \in A_{i+1}} X_a \right)  \left( \prod_{b \in B_{i+1}} Y_b \right) Z^{d_3} \right) \xi  \right\Vert + 32  \left( \frac{1}{\sqrt{2}} \right)^{t} \Vert \xi \Vert.
\end{dmath*}

Denote $\xi ' = \pi \left( \left( \prod_{a \in A_{i+1}} X_a \right)  \left( \prod_{b \in B_{i+1}} Y_b \right) Z^{d_3} \right) \xi $.  With this notation, we showed that
$$I_i \leq \left\Vert  \pi \left( X_{d_1-1 - i t} \left( \prod_{b \in B_i '}  Y_{b}  \right) (e-Z_{d_3}) \right) \xi ' \right\Vert + 32  \left( \frac{1}{\sqrt{2}} \right)^{t} \Vert \xi \Vert.$$
We note that 
$$\left\Vert \pi (e- Z_{d_3} ) \xi ' \right\Vert = \left\Vert  \pi \left( \left( \prod_{a \in A_{i+1}} X_a \right)  \left( \prod_{b \in B_{i+1}} Y_b \right) \left( Z^{d_3} - Z^{d_3+1} \right) \right) \xi  \right\Vert = I_{i+1}.$$

Thus,  in order to prove \eqref{final reduction ineq},  we are left to prove that 
$$\left\Vert  \pi \left( X_{d_1-1 - i t} \left( \prod_{b \in B_i '}  Y_{b}  \right) (e-Z_{d_3}) \right)  \xi ' \right\Vert \leq 4 \left( \frac{1}{\sqrt{2}} \right)^{t} \Vert \xi \Vert + r_1 \Vert \pi (e -Z_{d_3} ) \xi ' \Vert.$$
We recall that
\begin{align*}
\left\Vert  \pi \left( X_{d_1-1 - i t} \left( \prod_{b \in B_i '}  Y_{b}  \right) (e-Z_{d_3}) \right)  \xi ' \right\Vert = 
\left\Vert  \pi \left( X_{d_1-1 - i t} \left( \prod_{b=0}^{\frac{t}{2}-1}  Y_{(d_3-d_1+1+it)+b}  \right) (e-Z_{d_3}) \right)  \xi ' \right\Vert.
\end{align*}
Denoting $a_0 = d_1-1 - i t,  b_0 = d_3-d_1+1+it,  n= \frac{t}{2}$ and applying Corollary \ref{X,Y prod ineq final coro} (noting that $a_0 + b_0 = d_3$ and that we assumed that $n = \frac{t}{2} \geq 2$) yields that
\begin{align*}
\left\Vert  \pi \left( X_{d_1-1 - i t} \left( \prod_{b=0}^{\frac{t}{2}-1}  Y_{(d_3-d_1+1+it)+b}  \right) (e-Z_{d_3}) \right)  \xi ' \right\Vert \leq \max \left\lbrace r_1  \Vert \pi (e-Z_{d_3}) \xi ' \Vert, \frac{1}{2^{\frac{t}{2}-2}} \Vert \xi ' \Vert \right\rbrace \leq \\
4 \left( \frac{1}{\sqrt{2}} \right)^{t} \Vert \xi ' \Vert  + r_1 \Vert \pi (e-Z_{d_3}) \xi ' \Vert \leq 4 \left( \frac{1}{\sqrt{2}} \right)^{t} \Vert \xi \Vert  + r_1 \Vert \pi (e-Z_{d_3}) \xi ' \Vert,
\end{align*}
as needed.
\end{proof}

\begin{corollary}
\label{main ineq coro}
Let $\delta_0 : (0,2] \rightarrow (0,1]$ be a function.  There are constants $0 \leq r <1, C>0$ such that for every
 $d_1, d_2, d_3,d_4  \in \mathbb{N}$ such that  $d_1,  d_2 \geq \frac{1}{4} \min \lbrace d_3, d_4 \rbrace$ and $d_1 + d_2 - \max \lbrace d_3,d_4 \rbrace \geq \frac{1}{4} \min \lbrace d_3, d_4 \rbrace$,   it holds that 
$$\left\Vert X^{d_1} Y^{d_2}  \left(Z^{d_3} - Z^{d_4} \right) \right\Vert_{\mathcal{U} (\mathcal{E}_{uc} (\delta_0))} \leq \vert d_4-d_3 \vert  C r^{\sqrt{\min \lbrace d_3, d_4 \rbrace}},$$
and
$$\left\Vert Y^{d_1} X^{d_2} \left(Z^{d_3}  - Z^{d_4} \right) \right\Vert_{\mathcal{U} (\mathcal{E}_{uc} (\delta_0))} \leq \vert d_4-d_3 \vert  C r^{ \sqrt{\min \lbrace d_3, d_4 \rbrace}}.$$
\end{corollary}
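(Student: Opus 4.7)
The plan is to reduce to Theorem \ref{main ineq thm} by telescoping and then handling the possibility that $d_1$ or $d_2$ exceeds the new ``$d_3$'' parameter via a commutativity argument in $\rm H_3(\mathbb{Z})$.

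\emph{Step 1: telescoping.} Assume without loss of generality $d_3<d_4$ (the case $d_3=d_4$ is trivial). Write
\[
Z^{d_4}-Z^{d_3}=\sum_{k=d_3}^{d_4-1}\bigl(Z^{k+1}-Z^{k}\bigr),
\]
so that by the triangle inequality it suffices to bound $\|X^{d_1}Y^{d_2}(Z^{k+1}-Z^{k})\|_{\mathcal{U}(\mathcal{E}_{uc}(\delta_0))}$ uniformly by $Cr^{\sqrt{\min\{d_3,d_4\}}}$ for each $k\in\{d_3,\dots,d_4-1\}$, and then sum the $|d_4-d_3|$ contributions.

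\emph{Step 2: reduction to Theorem \ref{main ineq thm}.} Fix $k\in\{d_3,\dots,d_4-1\}$. I would split into cases according to whether $d_1,d_2\le k$ or not. In $\rm H_3(\mathbb{Z})$ the element $z$ is central and the operators $X_a$ (resp.\ $Y_b$) all pairwise commute since they live in the abelian subgroups generated by $x$ (resp.\ $y$). So if $d_1>k$ I would factor $X^{d_1}=\bigl(\prod_{a=k}^{d_1-1}X_a\bigr)X^{k}$; submultiplicativity of $\|\cdot\|_{\mathcal{U}(\mathcal{E}_{uc}(\delta_0))}$ combined with $\|X_a\|_{\mathcal{U}(\mathcal{E}_{uc}(\delta_0))}\le 1$ yields
\[
\bigl\|X^{d_1}Y^{d_2}(Z^{k+1}-Z^{k})\bigr\|_{\mathcal{U}(\mathcal{E}_{uc}(\delta_0))}\le\bigl\|X^{k}Y^{d_2}(Z^{k+1}-Z^{k})\bigr\|_{\mathcal{U}(\mathcal{E}_{uc}(\delta_0))}.
\]
Similarly, if $d_2>k$ I would factor $Y^{d_2}=Y^{k}\bigl(\prod_{b=k}^{d_2-1}Y_b\bigr)$ and move the trailing $Y_b$'s past $(Z^{k+1}-Z^{k})$ using the fact that $y$ and $z$ commute in $\rm H_3(\mathbb{Z})$. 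After these reductions, Theorem \ref{main ineq thm} applies to $X^{\min\{d_1,k\}}Y^{\min\{d_2,k\}}(Z^{k+1}-Z^{k})$; its hypotheses $\min\{d_1,k\}+\min\{d_2,k\}\ge k$ follow from $d_1+d_2\ge \max\{d_3,d_4\}+\tfrac14\min\{d_3,d_4\}\ge k$.

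\emph{Step 3: uniform bound and summation.} The application of Theorem \ref{main ineq thm} yields the bound $Cr_2^{\sqrt{\min\{d_1,k\}+\min\{d_2,k\}-k}}$. Using the hypothesis $d_1,d_2\ge\tfrac14\min\{d_3,d_4\}$, in every case the exponent is at least $\sqrt{\min\{d_3,d_4\}/4}=\tfrac12\sqrt{\min\{d_3,d_4\}}$: in the case $d_1,d_2\le k$ this uses $d_1+d_2-k\ge d_1+d_2-(d_4-1)\ge\tfrac14\min\{d_3,d_4\}$; if exactly one of $d_1,d_2$ exceeds $k$ one gets $\sqrt{\min\{d_1,d_2\}}\ge\tfrac12\sqrt{\min\{d_3,d_4\}}$; if both exceed $k$ one uses $\sqrt{k}\ge\sqrt{d_3}$. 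Setting $r:=r_2^{1/2}$ gives $Cr_2^{\sqrt{\min\{d_3,d_4\}}/2}=Cr^{\sqrt{\min\{d_3,d_4\}}}$. Multiplying by the $|d_4-d_3|$ telescoping terms yields the first inequality.

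\emph{Step 4: the $Y^{d_1}X^{d_2}$ variant.} I would obtain this by redoing the proofs of Lemma \ref{product inequality lemma}, Lemma \ref{X,Y prod ineq 2 lemma}, Corollary \ref{X,Y prod ineq final coro}, and Theorem \ref{main ineq thm} with the roles of $x$ and $y$ swapped, using the companion identity $x^{-k}y^{m}x^{k}=y^{m}z^{-km}$ in place of $y^{-k}x^{m}y^{k}=x^{m}z^{km}$. All estimates go through verbatim: the sign change on $z$ is immaterial since only $|{-}km|$ enters the bounds, and Theorem \ref{change of order thm} as stated is already symmetric in $x,y$. Telescoping as in Steps 1--3 then gives the second inequality. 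The main obstacle is verifying that after the factorization in Step 2 the exponent in the resulting bound is still at least $\tfrac12\sqrt{\min\{d_3,d_4\}}$, which is precisely what the hypothesis $d_1,d_2\ge\tfrac14\min\{d_3,d_4\}$ is designed to ensure.
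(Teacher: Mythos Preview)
Your proposal is correct, and Steps 1--3 follow the paper's proof essentially verbatim: telescoping over $Z^{k}-Z^{k+1}$, truncating $d_1,d_2$ down to $\min\{d_i,k\}$ using commutativity with $z$, applying Theorem \ref{main ineq thm}, and verifying case-by-case that the resulting exponent is at least $\tfrac12\sqrt{\min\{d_3,d_4\}}$ so that $r:=\sqrt{r_2}$ works.

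For Step 4, the paper takes a shorter route than redoing the preceding lemmas with $x\leftrightarrow y$. It uses the automorphism $\Phi$ of $\rm H_3(\mathbb{Z})$ determined by $\Phi(x)=y^{-1}$, $\Phi(y)=x$, so that $\Phi(z)=z$, $\Phi(Y^{d_2})=X^{d_2}$, $\Phi(Z^{d})=Z^{d}$, and $\Phi(X^{d_1})=y^{-2^{d_1}+1}Y^{d_1}$. Setting $\pi_0:=\pi\circ\Phi$ and applying the already-proved first inequality to $\pi_0$ gives
\[
\bigl\|\pi_0\bigl(X^{d_1}Y^{d_2}(Z^{d_3}-Z^{d_4})\bigr)\bigr\|=\bigl\|\pi(y^{-2^{d_1}+1})\,\pi\bigl(Y^{d_1}X^{d_2}(Z^{d_3}-Z^{d_4})\bigr)\bigr\|=\bigl\|\pi\bigl(Y^{d_1}X^{d_2}(Z^{d_3}-Z^{d_4})\bigr)\bigr\|,
\]
which yields the second inequality immediately. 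Your approach of re-proving the chain of lemmas with the companion identity $x^{-k}y^m x^k=y^m z^{-km}$ also works, but the automorphism trick avoids the repetition.
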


\begin{proof}
Without loss of generality, we will assume that $d_4 >d_3$.  

We will start by proving the first inequality. 

Let $r_2, C$ be the constants of Theorem \ref{main ineq thm} and take $r= \sqrt{r_2}$.  We note that it is enough to prove that for any $0 \leq j \leq d_4 -d_3-1$ it holds that 
$$\left\Vert  X^{d_1} Y^{d_2}  \left(Z^{d_3+j} - Z^{d_3+j+1} \right)  \right\Vert_{\mathcal{U} (\mathcal{E}_{uc} (\delta_0))}  \leq C r^{\sqrt{d_3}}.$$
If $d_1, d_2 \leq d_3 +j$ this inequality follow immediately from Theorem \ref{main ineq thm}.  Otherwise,  either $d_2 > d_3+j$ or $d_1 > d_3 +j$ (or both) and thus 
$$\min \lbrace d_1, d_3 +j \rbrace + \min \lbrace d_2, d_3 +j \rbrace -d_3 -j \geq \frac{1}{4} d_3.$$
In this case,  we apply Theorem \ref{main ineq thm} replacing $d_i$ with $\min \lbrace d_i, d_3 +j \rbrace$ for $i=1,2$:
\begin{align*}
\left\Vert  X^{d_1} Y^{d_2}  \left(Z^{d_3+j} - Z^{d_3+j+1} \right)  \right\Vert_{\mathcal{U} (\mathcal{E}_{uc} (\delta_0))}  \leq \\
\left\Vert X^{\min \lbrace d_1, d_3 +j  \rbrace} Y^{\min \lbrace d_2, d_3 +j  \rbrace}  \left(Z^{d_3+j} - Z^{d_3+j+1} \right)  \right\Vert_{\mathcal{U} (\mathcal{E}_{uc} (\delta_0))}  \leq 
C r^{\sqrt{d_3}},
\end{align*}
as needed.

In order to prove the second inequality, we need to prove that for every $(\pi, \B) \in \mathcal{U} (\mathcal{E}_{uc} (\delta_0))$ it holds that 
$$\left\Vert \pi \left( Y^{d_2} X^{d_1} \left(Z^{d_3}  - Z^{d_4} \right)  \right) \right\Vert \leq (d_4-d_3) C r^{ \sqrt{d_3}}.$$

Fix $(\pi, \B) \in \mathcal{U} (\mathcal{E}_{uc} (\delta_0))$.  Let $\Phi : \rm H_3 (\mathbb{Z}) \rightarrow \rm H_3 (\mathbb{Z})$ be the isomorphism induced by 
$\Phi (x^{-1}) = y$ and $\Phi (y) = x$.   We note that 
$$\Phi (z) = \Phi (x^{-1} y^{-1} x y) = y x^{-1} y^{-1} x = [x,y^{-1}]^{-1} = z.$$

Extending $\Phi$ linearly yields that 
$$\Phi (Y^{d_2}) = X^{d_2},    \Phi (Z^{d_3}) = Z^{d_3},  \Phi (Z^{d_4}) = Z^{d_4},$$
and
$$\Phi (X^{d_1}) = \frac{1}{2^{d_1}} \sum_{b=-2^{d_1}+1}^0 y^b = y^{-2^{d_1}+1} \left( \frac{1}{2^{d_1}} \sum_{b=0}^{2^{d_1}-1} y^b \right)= y^{-2^{d_1}+1} Y^{d_1}.$$

Define $\pi_0 = \pi \circ \Phi$,  then $(\pi_0,  \B) \in  \mathcal{U} (\mathcal{E}_{uc} (\delta_0))$ and by the first inequality proven above it follows that 
\begin{align*}
C r^{\sqrt{d_3}} \geq \left\Vert \pi_0 \left( X^{d_1} Y^{d_2}  \left(Z^{d_3} - Z^{d_4} \right) \right)\right\Vert = \\
\left\Vert \pi \left( \Phi (X^{d_1}) \Phi (Y^{d_2})  \left(\Phi (Z^{d_3}) - \Phi (Z^{d_4}) \right) \right)\right\Vert = \\
\left\Vert \pi (y^{-2^{d_1}+1}) \pi \left( Y^{d_1} X^{d_2} \left(Z^{d_3} - Z^{d_4} \right) \right)\right\Vert = \\
\left\Vert \pi \left( Y^{d_1} X^{d_2} \left(Z^{d_3} - Z^{d_4} \right) \right)\right\Vert,
\end{align*}
as needed.
\end{proof}

\section{Relative Banach property (T) for $(\SL_3 (\mathbb{Z}),  \UT_3 (\mathbb{Z}))$ and $(\SL_3 (\mathbb{Z}), \LT_3 (\mathbb{Z}))$} 
\label{Relative Banach property sec}

In this section we will prove our main relative Banach property (T) result stated in the introduction.

For any $\lbrace i,j,k \rbrace = \lbrace 1,2,3 \rbrace$,  we denote 
$$H_{i,k} = \langle e_{i,j} (1), e_{j,k} (1),  e_{i,k} (1) \rangle < \SL_3 (\mathbb{Z}),$$
$$\widetilde{H}_{i,k} = \langle x_{i,j} (1), x_{j,k} (1),  x_{i,k} (1) \rangle < \St_3 (\mathbb{Z}),$$
For example,  $H_{1,3}$ is the group of uni-upper-triangular matrices $\UT_3 (\mathbb{Z})$ that appeared in the introduction. 

We will prove the following theorem:
\begin{theorem}
\label{relative prop T thm}
For any function $\delta_0 : (0, 2] \rightarrow (0,1]$ and any $1 \leq i,k \leq 3,  i \neq k$,  the pairs $(\SL_3 (\mathbb{Z}), H_{i,k})$ and $(\St_3 (\mathbb{Z}), \widetilde{H}_{i,k})$ have relative property $(T^\proj_{\mathcal{E}_{uc} (\delta_0)})$.  

In particular,  for any function $\delta_0 : (0, 2] \rightarrow (0,1]$,  the pairs $(\SL_3 (\mathbb{Z}), \UT_3 (\mathbb{Z}))$ and $(\SL_3 (\mathbb{Z}), \LT_3 (\mathbb{Z}))$ have relative property  $(T^\proj_{\mathcal{E}_{uc} (\delta_0)})$.  
\end{theorem}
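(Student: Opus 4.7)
Since conjugation by signed permutation matrices in $\SL_3(\mathbb{Z})$ interchanges the pairs $(\SL_3(\mathbb{Z}), H_{i,k})$, and the Steinberg cases $(\St_3(\mathbb{Z}), \widetilde H_{i,k})$ reduce to the same Heisenberg identities (since the Steinberg relations restricted to $x_{i,j}, x_{j,k}, x_{i,k}$ are exactly those defining $\mathrm{H}_3(\mathbb{Z})$), it suffices to treat the model pair $(\SL_3(\mathbb{Z}), \UT_3(\mathbb{Z}))$. Identify $\UT_3(\mathbb{Z}) \cong \mathrm{H}_3(\mathbb{Z})$ via $\tilde x = e_{1,2}(1), \tilde y = e_{2,3}(1), \tilde z = e_{1,3}(1)$ and use the averages $X^d, Y^d, Z^d\in\Prob_c(\UT_3(\mathbb{Z}))$ from \cref{Averaging operations sec}. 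The plan is to produce a sequence
$$h_n = X^{d_n} Y^{d_n} Z^{D_n}\in \Prob_c(\SL_3(\mathbb{Z}))$$
with exponents $d_n, D_n\to\infty$ chosen so that both $D_n - d_n\to\infty$ and $D_n= O(d_n)$, and to show (i)~$(h_n)$ is Cauchy in the $\Vert\cdot\Vert_{\mathcal{U}(\mathcal{E}_{uc}(\delta_0))}$-norm and (ii)~its limit $f$ satisfies $\im(\pi(f))\subseteq \B^{\pi(\UT_3(\mathbb{Z}))}$ for every $(\pi,\B)\in\mathcal{U}(\mathcal{E}_{uc}(\delta_0))$.

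For (ii), I would verify $\Vert\tilde g h_n - h_n\Vert_{\mathcal{U}(\mathcal{E}_{uc}(\delta_0))}\to 0$ for each generator $\tilde g\in\{\tilde x, \tilde y, \tilde z\}$ of $\UT_3(\mathbb{Z})$. For $\tilde x$ and $\tilde z$, direct telescopes yield $\Vert\tilde x h_n - h_n\Vert\leq 2/2^{d_n}$ and $\Vert\tilde z h_n - h_n\Vert\leq 2/2^{D_n}$. The $\tilde y$-case is delicate: using the Heisenberg identity $\tilde y\tilde x^a = \tilde x^a\tilde z^{-a}\tilde y$, one decomposes $\tilde y h_n - h_n$ as a main term
$$\frac{1}{2^{d_n}}\sum_{a=0}^{2^{d_n}-1}\tilde x^a Y^{d_n}(\tilde z^{-a}-e) Z^{D_n},$$
whose norm is bounded by $2^{d_n}/2^{D_n}$ via the trivial estimate $\Vert(\tilde z^{-a}-e)Z^{D_n}\Vert\leq 2a/2^{D_n}$, plus an edge contribution of norm $\leq 2/2^{d_n}$ coming from $\tilde y Y^{d_n}-Y^{d_n}$; the total tends to $0$ provided $D_n - d_n\to\infty$ and $d_n\to\infty$.

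For (i), I would telescope $h_{n+1}-h_n$ through the intermediate $h_n^{(1)} = X^{d_{n+1}} Y^{d_{n+1}} Z^{D_n}$. The $Z$-step $h_{n+1}-h_n^{(1)} = X^{d_{n+1}}Y^{d_{n+1}}(Z^{D_{n+1}}-Z^{D_n})$ is controlled directly by Corollary~\ref{main ineq coro}, which yields a summable decay of the form $|D_{n+1}-D_n|\,C\,r^{\sqrt{D_n}}$ whenever $(d_{n+1}, D_n, D_{n+1})$ lies in the corollary's admissible regime. The $(X,Y)$-step $h_n^{(1)}-h_n = (X^{d_{n+1}}Y^{d_{n+1}}-X^{d_n}Y^{d_n})Z^{D_n}$ is the harder piece: expanding $X^{d_{n+1}} = X^{d_n} X_{d_n}$ and $Y^{d_{n+1}} = Y^{d_n} Y_{d_n}$ and decomposing into residual pieces, each can be reduced — via Theorem~\ref{change of order thm} for moving the new $X_k, Y_j$ factors across $Z^{D_n}$ together with a renewed application of Corollary~\ref{main ineq coro} — to a bounded multiple of $r^{\sqrt{\Theta(d_n)}}$.

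The main obstacle is balancing the schedule $(d_n, D_n)$: the admissibility condition of Corollary~\ref{main ineq coro} roughly demands $d_n\gtrsim D_n$ (so that $D_n$ cannot grow much faster than $d_n$), while the $\tilde y$-invariance step forces $D_n - d_n\to\infty$. Reconciling these constraints — for instance by taking $D_n = d_n + \phi(d_n)$ for a slowly growing $\phi$ — together with the delicate multi-step reduction of the hard piece in~(i) is the crux of the proof. Once (i) and (ii) are established, the sequence $(h_n)\subseteq\Prob_c(\SL_3(\mathbb{Z}))$ converges in $\Vert\cdot\Vert_{\mathcal{U}(\mathcal{E}_{uc}(\delta_0))}$ to some $f\in C_{\mathcal{U}(\mathcal{E}_{uc}(\delta_0))}(\SL_3(\mathbb{Z}))$ with $\im(\pi(f))\subseteq \B^{\pi(\UT_3(\mathbb{Z}))}$ for every $\pi$, which is the definition of relative property $(T^{\proj}_{\mathcal{E}_{uc}(\delta_0)})$.
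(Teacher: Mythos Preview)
There is a genuine obstruction to your approach: your sequence $h_n = X^{d_n}Y^{d_n}Z^{D_n}$ is supported entirely inside $\UT_3(\mathbb{Z})\cong\mathrm{H}_3(\mathbb{Z})$, and no such sequence can satisfy both (i) and (ii). Indeed, take the left regular representation $\lambda$ of $\SL_3(\mathbb{Z})$ on $\ell^2(\SL_3(\mathbb{Z}))$, which lies in $\mathcal{U}(\mathcal{E}_{uc}(\delta_0))$. Restricted to $\UT_3(\mathbb{Z})$ it decomposes (along right cosets) as a direct sum of copies of the regular representation of the amenable group $\UT_3(\mathbb{Z})$, so $\Vert\lambda(h_n)\Vert=1$ for every $n$. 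On the other hand $\ell^2(\SL_3(\mathbb{Z}))^{\lambda(\UT_3(\mathbb{Z}))}=\{0\}$, since $\UT_3(\mathbb{Z})$ is infinite. If $(h_n)$ were Cauchy in $\Vert\cdot\Vert_{\mathcal{U}(\mathcal{E}_{uc}(\delta_0))}$ with limit $f$ and $\im(\lambda(f))\subseteq\{0\}$, we would get $\lambda(f)=0$ while $\Vert\lambda(f)\Vert=\lim_n\Vert\lambda(h_n)\Vert=1$, a contradiction. Concretely, the ``hard piece'' $(X^{d_{n+1}}Y^{d_{n+1}}-X^{d_n}Y^{d_n})Z^{D_n}$ cannot be controlled by the tools of \cref{Averaging operations sec}: those estimates are universal for representations of $\mathrm{H}_3(\mathbb{Z})$, and the argument above shows no such universal bound of the required form exists.

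The paper's proof avoids this by taking $h_d$ supported on \emph{all six} root subgroups (upper and lower triangular), alternating between two orderings
\[
T_d = X_{1,2}^{4d}X_{1,3}^{10d}X_{2,3}^{9d}X_{2,1}^{9d}X_{3,1}^{10d}X_{3,2}^{4d},\qquad
S_d = X_{2,3}^{4d}X_{1,3}^{10d}X_{1,2}^{9d}X_{3,2}^{9d}X_{3,1}^{10d}X_{2,1}^{4d}.
\]
The Cauchy estimate $\Vert T_d-S_{d\pm1}\Vert\leq Ld\,r^{\sqrt d}$ is obtained by a chain of switch and up/down moves (Lemma~\ref{norm cost bounds lemma}) that successively pass through the \emph{different} Heisenberg copies $H_{1,3},H_{3,1},H_{2,3},H_{2,1},H_{1,2},H_{3,2}$; it is precisely this use of the ambient $\SL_3(\mathbb{Z})$ structure beyond a single Heisenberg subgroup that makes the sequence Cauchy. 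Invariance under $\UT_3(\mathbb{Z})$ then comes for free: the leftmost factor of $T_d$ is $X_{1,2}^{4d}$ (giving $e_{1,2}(1)f=f$) and the leftmost factor of $S_d$ is $X_{2,3}^{4d}$ (giving $e_{2,3}(1)f=f$), and these two elements generate $\UT_3(\mathbb{Z})$.
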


Below, we will prove this theorem only for the pair $(\SL_3 (\mathbb{Z}), UT_3 (\mathbb{Z}))$. The proof will only use the Steinberg relations of $\SL_3 (\mathbb{Z})$ and thus it applies verbatim to the pair $(\St_3 (\mathbb{Z}), \widetilde{H}_{1,3} (\mathbb{Z}))$ (replacing each $e_{i,j}$ with $x_{i,j}$).   The proof for any other $H_{i,k}$ (or $\widetilde{H}_{i,k}$ in the case of the Steinberg group) follows from the case $UT_3 (\mathbb{Z})=H_{1,3}$ after permuting the indices.  

In order to prove this theorem,  we define the following: Let $1 \leq i,k \leq 3,  i \neq k$ and $d \in \mathbb{N}$.  Define $X_{i,k}^d \in \Prob_c (\SL_3 (\mathbb{Z}))$ by 
$$X_{i,k}^d = \frac{1}{2^{d}} \sum_{a=0}^{2^{d}-1} e_{i,k} (a).$$

Informally,  the idea of the proof is to look a product of all the $X_{i,k}^d$'s and to preform ``moves'' on this product with a small ``norm cost'' (such that this ``cost'' decrease as $d$ increases).  We will consider the following ``moves'' for $\lbrace i,j,k \rbrace = \lbrace 1,2,3 \rbrace$:
\begin{enumerate}
\item Switch moves on $H_{i,k}$: Replacing $X_{i,j}^{d_1} X_{i,k}^{d_3} X_{j,k}^{d_2}$ with  $X_{j,k}^{d_2}   X_{i,k}^{d_3} X_{i,j}^{d_1}$ and vice-versa.
\item Up/down moves on $H_{i,k}$: Replacing $X_{i,j}^{d_1} X_{i,k}^{d_3} X_{j,k}^{d_2}$ with $X_{j,k}^{d_2}   X_{i,k}^{d_4} X_{i,j}^{d_1}$ where $d_4 \in \mathbb{N}$ (when $d_4 > d_3$ this will be called an up move and when $d_4 < d_3$ this will be called a down move). 
\end{enumerate}

The following lemma bounds the ``norm cost'' of these moves:
\begin{lemma}
\label{norm cost bounds lemma}
Let  $\delta_0 : (0, 2] \rightarrow (0,1]$ and $\lbrace i,j,k \rbrace = \lbrace 1,2,3 \rbrace$.
For any $P_1,  P_1, T \in \Prob_c (\SL_3 (\mathbb{Z}))$ the following holds:
\begin{enumerate}
\item Switch moves on $H_{i,k}$ have small ``norm cost'': For any constants $d_1,d_2, d_3 \in \mathbb{N}$ with $d_1+d_2 \leq d_3 -2$ it holds that
\begin{align*}
& \left\Vert P_1 \left( X_{i,j}^{d_1} X_{i,k}^{d_3} X_{j,k}^{d_2}\right)  P_2 - T \right\Vert_{\mathcal{U} (\mathcal{E}_{uc} (\delta_0))} \leq \\
&\left\Vert P_1 \left( X_{j,k}^{d_2} X_{i,k}^{d_3}  X_{i,j}^{d_1} \right)  P_2 - T \right\Vert_{\mathcal{U} (\mathcal{E}_{uc} (\delta_0))} + 8 \left( \frac{1}{2} \right)^{d_3- (d_1+d_2)},
\end{align*}
\begin{align*}
& \left\Vert P_1 \left( X_{j,k}^{d_2} X_{i,k}^{d_3}  X_{i,j}^{d_1} \right) P_2 - T \right\Vert_{\mathcal{U} (\mathcal{E}_{uc} (\delta_0))} \leq \\
&\left\Vert P_1 \left(  X_{i,j}^{d_1} X_{i,k}^{d_3} X_{j,k}^{d_2}  \right) P_2 - T \right\Vert_{\mathcal{U} (\mathcal{E}_{uc} (\delta_0))} + 8 \left( \frac{1}{2} \right)^{d_3- (d_1+d_2)}.
\end{align*}

\item Up/down moves on $H_{i,k}$ have small ``norm cost'': Let $0 \leq r <1,  C >0$ be the constants of Corollary \ref{main ineq coro}.  For every  $d_1, d_2, d_3,d_4 \in \mathbb{N}$ such that $d_1,  d_2 \geq \frac{1}{4} \min \lbrace d_3, d_4 \rbrace$ and $d_1 + d_2 - \max \lbrace d_3,d_4 \rbrace \geq \frac{1}{4} \min \lbrace d_3, d_4 \rbrace$,  it holds that
\begin{align*}
& \left\Vert P_1 \left( X_{i,j}^{d_1} X_{i,k}^{d_3} X_{j,k}^{d_2} \right) P_2 - T \right\Vert_{\mathcal{U} (\mathcal{E}_{uc} (\delta_0))} \leq \\
& \left\Vert P_1 \left( X_{i,j}^{d_1} X_{i,k}^{d_4} X_{j,k}^{d_2}  \right) P_2 - T \right\Vert_{\mathcal{U} (\mathcal{E}_{uc} (\delta_0))} + \vert d_4 -d_3 \vert C r^{\sqrt{\min \lbrace d_3, d_4 \rbrace}},
\end{align*}
\begin{align*}
& \left\Vert P_1  \left( X_{j,k}^{d_2} X_{i,k}^{d_3} X_{i,j}^{d_1} \right)  P_2 - T \right\Vert_{\mathcal{U} (\mathcal{E}_{uc} (\delta_0))} \leq \\
& \left\Vert  P_1 \left( X_{j,k}^{d_2} X_{i,k}^{d_4} X_{i,j}^{d_1}  \right) P_2 - T \right\Vert_{\mathcal{U} (\mathcal{E}_{uc} (\delta_0))} +  \vert d_4 -d_3 \vert C r^{\sqrt{\min \lbrace d_3, d_4 \rbrace}}.
\end{align*}

\end{enumerate}
\end{lemma}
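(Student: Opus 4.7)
The plan is to reduce each of the four inequalities to a norm bound already established in Section \ref{Averaging operations sec} for the Heisenberg group $\rm H_3 (\mathbb{Z})$. Recall from Section \ref{prelim sec} that the Steinberg relations give an isomorphism $\rm H_3 (\mathbb{Z}) \cong H_{i,k}$ via $x \mapsto e_{i,j}(1)$, $y \mapsto e_{j,k}(1)$, $z \mapsto e_{i,k}(1)$; extending linearly, this isomorphism sends $X^{d_1}, Y^{d_2}, Z^{d_3} \in \Prob_c(\rm H_3 (\mathbb{Z}))$ to $X_{i,j}^{d_1}, X_{j,k}^{d_2}, X_{i,k}^{d_3} \in \Prob_c(\SL_3(\mathbb{Z}))$ respectively. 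Consequently, every $\mathcal{U}(\mathcal{E}_{uc}(\delta_0))$-norm estimate proved in Section \ref{Averaging operations sec} transfers verbatim to the corresponding product in $\SL_3(\mathbb{Z})$.

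Next I would make the preliminary reduction that since any probability measure acts by a contraction under every isometric representation, $\|\pi(P_1)\|, \|\pi(P_2)\| \leq 1$, and thus
$$\|P_1 A P_2 - T\|_{\mathcal{U}(\mathcal{E}_{uc}(\delta_0))} \leq \|A - B\|_{\mathcal{U}(\mathcal{E}_{uc}(\delta_0))} + \|P_1 B P_2 - T\|_{\mathcal{U}(\mathcal{E}_{uc}(\delta_0))}.$$
It therefore suffices, in each of the four cases, to bound $\|A - B\|_{\mathcal{U}(\mathcal{E}_{uc}(\delta_0))}$ by the advertised quantity, where $A$ and $B$ are the two triple products being compared.

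For the switch moves, the decisive point is that the Steinberg relation (3) gives $[e_{i,j}(1), e_{i,k}(1)] = [e_{j,k}(1), e_{i,k}(1)] = I$, so $X_{i,k}^{d_3}$ commutes with both $X_{i,j}^{d_1}$ and $X_{j,k}^{d_2}$. Factoring it out on the right gives
$$X_{i,j}^{d_1} X_{i,k}^{d_3} X_{j,k}^{d_2} - X_{j,k}^{d_2} X_{i,k}^{d_3} X_{i,j}^{d_1} = \bigl(X_{i,j}^{d_1} X_{j,k}^{d_2} - X_{j,k}^{d_2} X_{i,j}^{d_1}\bigr) X_{i,k}^{d_3},$$
which under the Heisenberg identification becomes $(X^{d_1} Y^{d_2} - Y^{d_2} X^{d_1}) Z^{d_3}$. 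The ``in particular'' clause of Theorem \ref{change of order thm} bounds its $\mathcal{U}(\mathcal{E}_{uc}(\delta_0))$-norm by $8(1/2)^{d_3 - (d_1 + d_2)}$, yielding both switch inequalities.

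For the up/down moves the same commutation observation gives
$$X_{i,j}^{d_1} X_{i,k}^{d_3} X_{j,k}^{d_2} - X_{i,j}^{d_1} X_{i,k}^{d_4} X_{j,k}^{d_2} = X_{i,j}^{d_1} X_{j,k}^{d_2} \bigl(X_{i,k}^{d_3} - X_{i,k}^{d_4}\bigr),$$
corresponding under the Heisenberg isomorphism to $X^{d_1} Y^{d_2}(Z^{d_3} - Z^{d_4})$; the hypotheses on $d_1, d_2, d_3, d_4$ are exactly those of Corollary \ref{main ineq coro}, whose first bound supplies $|d_4 - d_3|\, C\, r^{\sqrt{\min\{d_3, d_4\}}}$. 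The second up/down inequality reduces analogously to $Y^{d_2} X^{d_1}(Z^{d_3} - Z^{d_4})$ and is dispatched by the second bound in the same corollary. There is no real obstacle here: the entire analytic content of the lemma is concentrated in Theorem \ref{change of order thm} and Corollary \ref{main ineq coro}, and the present statement is essentially a bookkeeping translation of those Heisenberg estimates to $\SL_3(\mathbb{Z})$ via the Steinberg copy of $\rm H_3 (\mathbb{Z})$ inside $H_{i,k}$.
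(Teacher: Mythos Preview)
Your proposal is correct and follows essentially the same approach as the paper: both arguments apply the triangle inequality to strip off $P_1,P_2$ (using that probability measures act by contractions) and then invoke Theorem~\ref{change of order thm} for the switch moves and Corollary~\ref{main ineq coro} for the up/down moves. You are simply more explicit than the paper about two points it leaves implicit---the Heisenberg isomorphism $H_3(\mathbb{Z})\cong H_{i,k}$ transferring the Section~\ref{Averaging operations sec} estimates, and the commutation of $X_{i,k}^{d_3}$ with $X_{i,j}^{d_1}$ and $X_{j,k}^{d_2}$ that puts the differences into the exact form of those estimates.
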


\begin{proof}
The bounds of the ``norm cost'' of switch moves follow directly from Theorem \ref{change of order thm}.  We will prove the bound of the first switch move (the proof of second bound is similar).  Let $d_1,d_2, d_3 \in \mathbb{N}$ with $d_1+d_2 \leq d_3 -2$,  then by Theorem \ref{change of order thm}
\begin{align*}
&\left\Vert P_1 \left( X_{i,j}^{d_1} X_{i,k}^{d_3} X_{j,k}^{d_2}\right)  P_2 - T \right\Vert_{\mathcal{U} (\mathcal{E}_{uc} (\delta_0))} \leq \\
& \left\Vert P_1 \left( X_{j,k}^{d_2} X_{i,k}^{d_3}  X_{i,j}^{d_1} \right)  P_2 - T \right\Vert_{\mathcal{U} (\mathcal{E}_{uc} (\delta_0))} +  \left\Vert P_1 \left(X_{i,j}^{d_1} X_{i,k}^{d_3} X_{j,k}^{d_2} -  X_{j,k}^{d_2} X_{i,k}^{d_3}  X_{i,j}^{d_1} \right)  P_2  \right\Vert_{\mathcal{U} (\mathcal{E}_{uc} (\delta_0))} \leq \\
 & \left\Vert P_1 \left( X_{j,k}^{d_2} X_{i,k}^{d_3}  X_{i,j}^{d_1} \right)  P_2 - T \right\Vert_{\mathcal{U} (\mathcal{E}_{uc} (\delta_0))} +  \left\Vert X_{i,j}^{d_1} X_{i,k}^{d_3} X_{j,k}^{d_2} -  X_{j,k}^{d_2} X_{i,k}^{d_3}  X_{i,j}^{d_1}   \right\Vert_{\mathcal{U} (\mathcal{E}_{uc} (\delta_0))} \leq \\
& \left\Vert P_1 \left( X_{j,k}^{d_2} X_{i,k}^{d_3}  X_{i,j}^{d_1} \right)  P_2 - T \right\Vert_{\mathcal{U} (\mathcal{E}_{uc} (\delta_0))} + 8 \left( \frac{1}{2} \right)^{d_3- (d_1+d_2)}.
\end{align*}

The bounds of the ``norm cost'' of up/down moves follow directly from Corollary \ref{main ineq coro}.  We will prove the bound of the first up/down move (the proof of second bound is similar). Let  $d_1, d_2, d_3,d_4 \in \mathbb{N}$ such that $d_1,  d_2 \geq \frac{1}{4} \min \lbrace d_3, d_4 \rbrace$ and $d_1 + d_2 - \max \lbrace d_3,d_4 \rbrace \geq \frac{1}{4} \min \lbrace d_3, d_4 \rbrace$.  By Corollary \ref{main ineq coro},
\begin{align*}
&\left\Vert P_1 \left( X_{i,j}^{d_1} X_{i,k}^{d_3} X_{j,k}^{d_2}\right)  P_2 - T \right\Vert_{\mathcal{U} (\mathcal{E}_{uc} (\delta_0))} \leq \\
& \left\Vert P_1 \left( X_{i,j}^{d_1} X_{i,k}^{d_4} X_{j,k}^{d_2} \right)  P_2 - T \right\Vert_{\mathcal{U} (\mathcal{E}_{uc} (\delta_0))} +  \left\Vert P_1 \left(X_{i,j}^{d_1} X_{i,k}^{d_3} X_{j,k}^{d_2} -  X_{i,j}^{d_1} X_{i,k}^{d_4} X_{j,k}^{d_2} \right)  P_2  \right\Vert_{\mathcal{U} (\mathcal{E}_{uc} (\delta_0))} \leq \\
 & \left\Vert P_1 \left( X_{i,j}^{d_1} X_{i,k}^{d_4} X_{j,k}^{d_2} \right)  P_2 - T  \right\Vert_{\mathcal{U} (\mathcal{E}_{uc} (\delta_0))} +  \left\Vert  X_{i,j}^{d_1} X_{i,k}^{d_3} X_{j,k}^{d_2} -  X_{i,j}^{d_1} X_{i,k}^{d_4} X_{j,k}^{d_2} \right\Vert_{\mathcal{U} (\mathcal{E}_{uc} (\delta_0))} \leq \\
& \left\Vert P_1 \left( X_{i,j}^{d_1} X_{i,k}^{d_4} X_{j,k}^{d_2} \right)  P_2 - T \right\Vert_{\mathcal{U} (\mathcal{E}_{uc} (\delta_0))} +  \vert d_4 -d_3 \vert C r^{\sqrt{\min \lbrace d_3, d_4 \rbrace}}.
\end{align*}
\end{proof}

For $d \in \mathbb{N}$,  we define $T_d,  S_d  \in \Prob_c (\SL_3 (\mathbb{Z}))$ as follows:
$$T_d = X_{1, 2}^{4d} X_{1, 3}^{10d} X_{2,  3}^{9d}  X_{2,  1}^{9d} X_{3, 1}^{10d} X_{3,  2}^{4d},$$
$$S_d = X_{2,  3}^{4d } X_{1,  3}^{10d} X_{1,  2}^{9d} X_{3,  2}^{9d}  X_{3,  1}^{10d}  X_{2 , 1}^{4d}.$$

Using the lemma above,  we will show that $\Vert T_d -S_{d \pm 1} \Vert$ is small:

\begin{lemma}
\label{step ineq lemma}
Let $\delta_0 : (0, 2] \rightarrow (0,1]$ be some function.  Then there are constants $0 \leq r <1$ and $L >0$ such that for every $d \in \mathbb{N}$  it holds that 
$$\left\Vert T_d  -S_{d \pm 1} \right\Vert_{\mathcal{U} (\mathcal{E}_{uc} (\delta_0))} \leq L d r^{\sqrt{d}}.$$
\end{lemma}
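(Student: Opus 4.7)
My plan is to construct an explicit chain of $8$ moves, each drawn from Lemma \ref{norm cost bounds lemma}, that transforms $T_d$ into $S_{d+1}$; the case $S_{d-1}$ is handled by a completely analogous chain (and for $d$ smaller than some absolute constant the bound is trivial by enlarging $L$). The key observation is that, in the six-matrix string of $T_d$, a single matrix $X_{i,j}^{e}$ serves as the center of one Heisenberg window and as an outer element of adjacent ones, so an up/down move in one window modifies the outer exponents visible to another, and no single sub-window alone admits a switch.

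\textbf{Phase 1 (lowering the mixed centers).} Consider the window $W_2 \subseteq H_{2,3}$ at positions $2$--$4$ of $T_d$: the subword $X_{1,3}^{10d} X_{2,3}^{9d} X_{2,1}^{9d}$ is in after-switch form with parameters $(d_1, d_3, d_2) = (9d, 9d, 10d)$. I apply a down-move changing $X_{2,3}^{9d} \to X_{2,3}^{4(d+1)}$; the up/down hypotheses of Lemma \ref{norm cost bounds lemma} hold since $9d, 10d \geq \frac{1}{4}(4d+4)$ and $19d - 9d = 10d \geq \frac{1}{4}(4d+4)$. Next, in the modified product, the window $W_3 \subseteq H_{2,1}$ at positions $3$--$5$ becomes $X_{2,3}^{4(d+1)} X_{2,1}^{9d} X_{3,1}^{10d}$, in before-switch form with $(d_1, d_3, d_2) = (4(d+1), 9d, 10d)$; an analogous down-move changes $X_{2,1}^{9d} \to X_{2,1}^{4(d+1)}$.

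\textbf{Phase 2 (outer switches).} After Phase 1, the outer window $W_1 \subseteq H_{1,3}$ at positions $1$--$3$ is $X_{1,2}^{4d} X_{1,3}^{10d} X_{2,3}^{4(d+1)}$, with $d_1 + d_2 = 8d + 4 \leq 10d - 2 = d_3 - 2$ holding for $d \geq 3$. A switch move produces $X_{2,3}^{4(d+1)} X_{1,3}^{10d} X_{1,2}^{4d}$. Similarly, a switch in $W_4 \subseteq H_{3,1}$ at positions $4$--$6$ (now $X_{2,1}^{4(d+1)} X_{3,1}^{10d} X_{3,2}^{4d}$) yields $X_{3,2}^{4d} X_{3,1}^{10d} X_{2,1}^{4(d+1)}$.

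\textbf{Phase 3 (raising outer exponents).} The current product is $X_{2,3}^{4(d+1)} X_{1,3}^{10d} X_{1,2}^{4d} X_{3,2}^{4d} X_{3,1}^{10d} X_{2,1}^{4(d+1)}$. Positions $2$--$4$ now form a window in $H_{1,2}$: $X_{1,3}^{10d} X_{1,2}^{4d} X_{3,2}^{4d}$ with $(d_1, d_3, d_2) = (10d, 4d, 4d)$; an up-move changes $X_{1,2}^{4d} \to X_{1,2}^{9(d+1)}$. Positions $3$--$5$ then form a window in $H_{3,2}$; an up-move changes $X_{3,2}^{4d} \to X_{3,2}^{9(d+1)}$. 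Finally, two small up-moves in the outer windows $W_1$ and $W_4$ raise the anchor exponents $X_{1,3}^{10d} \to X_{1,3}^{10(d+1)}$ and $X_{3,1}^{10d} \to X_{3,1}^{10(d+1)}$, producing $S_{d+1}$. Summing via the triangle inequality and Lemma \ref{norm cost bounds lemma}, the four large up/down moves each contribute $O(d \cdot r^{c\sqrt d})$ (since $|d_4 - d_3| = O(d)$ and $\min\{d_3, d_4\} = \Omega(d)$), the two switch moves contribute $O(2^{-c'd})$, and the two small up-moves contribute $O(r^{c''\sqrt d})$, giving the claimed bound $L d r^{\sqrt d}$. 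The main obstacle is the careful verification at each step that the hypotheses $d_1, d_2 \geq \frac{1}{4}\min\{d_3, d_4\}$ and $d_1 + d_2 - \max\{d_3, d_4\} \geq \frac{1}{4}\min\{d_3, d_4\}$ hold; the specific exponents $4, 9, 10$ in the definitions of $T_d$ and $S_d$ are calibrated precisely to make this possible throughout the chain.
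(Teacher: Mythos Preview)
Your proof is correct and follows essentially the same strategy as the paper's: an eight-move chain built from the two move types of Lemma~\ref{norm cost bounds lemma}, transforming $T_d$ into $S_{d+1}$ while tracking the norm cost.  The only difference is the ordering of the moves: the paper raises $X_{1,3}^{10d}\to X_{1,3}^{10(d+1)}$ and $X_{3,1}^{10d}\to X_{3,1}^{10(d+1)}$ \emph{first} (its Step~1), then performs the down moves, the switches, and finally the up moves on $X_{1,2},X_{3,2}$; you instead defer the $X_{1,3},X_{3,1}$ raises to the very end of Phase~3.  Your reordering actually makes the hypothesis $d_1+d_2-\max\{d_3,d_4\}\ge \tfrac14\min\{d_3,d_4\}$ for those last two moves hold for all $d\ge 1$ (you get $3(d+1)\ge \tfrac{5d}{2}$), whereas the paper's ordering requires $d$ somewhat larger than its stated threshold $d>10$; of course this is immaterial since finitely many $d$ are absorbed into $L$.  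Otherwise the two arguments are the same.
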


\begin{proof}
We will prove the bound only for $\Vert T_d - S_{d+1} \Vert$,   the proof for $\Vert T_d - S_{d-1} \Vert$ is similar.  Let $r, C$ be the constants of Corollary \ref{main ineq coro}.  Without loss of generality,  we can assume that $d >10$ (otherwise,  we can choose $L$ to be large enough such that $L r^{\sqrt{10}} \geq 2$ and the inequality holds trivially).  

The idea of the proof is to use switch moves and up/down moves to change $T_d$ into $S_{d+1}$ while book-keeping the ``norm cost'' using Lemma \ref{norm cost bounds lemma}.  We remark that when preforming up/down moves on $H_{i,k}$,  the order of the product of $X_{i,j}^{d_1},  X_{j,k}^{d_2},  X_{i,k}^{d_3}$ (where $\lbrace i,j,k \rbrace = \lbrace 1,2,3 \rbrace$) does not matter, since $X_{i,k}^{d_3}$ commutes with $X_{i,j}^{d_1}$ and $X_{j,k}^{d_2}$.  

\textbf{Step 1 (Up moves on $H_{1,3},  H_{3,1}$)}: We note that we assumed that $d >10$ and thus for $d_1 = 4d,  d_2 = 9d,  d_3 = 10d,  d_4 = 10 (d+1)$ the conditions of Lemma \ref{norm cost bounds lemma}(2) hold.  Preforming up moves on $H_{1,3}$ and $H_{3,1}$,  we get by Lemma \ref{norm cost bounds lemma} that
\begin{align*}
& \left\Vert T_d  -S_{d + 1} \right\Vert_{\mathcal{U} (\mathcal{E}_{uc} (\delta_0))} = \\
& \left\Vert \left( X_{1, 2}^{4d} X_{1, 3}^{10d} X_{2,  3}^{9d}  \right) \left( X_{2,  1}^{9d} X_{3, 1}^{10d} X_{3,  2}^{4d} \right) -S_{d + 1} \right\Vert_{\mathcal{U} (\mathcal{E}_{uc} (\delta_0))} \leq \\
& \left\Vert \left( X_{1, 2}^{4d} X_{1, 3}^{10(d+1)} X_{2,  3}^{9d}  \right) \left( X_{2,  1}^{9d} X_{3, 1}^{10(d+1)} X_{3,  2}^{4d} \right) -S_{d + 1} \right\Vert_{\mathcal{U} (\mathcal{E}_{uc} (\delta_0))} + 20 C r^{\sqrt{10d}}.
\end{align*}

Bottom line: The ``norm cost'' of this step is $20 C r^{\sqrt{10d}} \leq 20 C d r^{\sqrt{d}}$.  \\

\textbf{Step 2 (Down moves on $H_{2,3}, H_{2,1}$):} For $d_1 = 10 (d+1),  d_2 = 9d,  d_3 = 9d,  d_4 = 4(d+1)$,  the conditions of Lemma \ref{norm cost bounds lemma}(2) are fulfilled and thus we can preform the following down move on $H_{2,3}$:
\begin{align*}
& \left\Vert  X_{1, 2}^{4d} \left( X_{1, 3}^{10(d+1)} X_{2,  3}^{9d} X_{2,  1}^{9d} \right) X_{3, 1}^{10(d+1)} X_{3,  2}^{4d} -S_{d + 1} \right\Vert_{\mathcal{U} (\mathcal{E}_{uc} (\delta_0))} \leq \\
& \left\Vert  X_{1, 2}^{4d} \left( X_{1, 3}^{10(d+1)} X_{2,  3}^{4 (d+1)} X_{2,  1}^{9d} \right) X_{3, 1}^{10(d+1)} X_{3,  2}^{4d} -S_{d + 1} \right\Vert_{\mathcal{U} (\mathcal{E}_{uc} (\delta_0))} + (9d- 4 (d+1)) C r^{\sqrt{4(d+1)}}.
\end{align*}
After that,  we preform a down move on $H_{2,1}$  with $d_1 = 4(d+1), d_2 = 10 (d+1), d_3 = 9d,  d_4 = 4(d+1)$ (using Lemma \ref{norm cost bounds lemma}(2) again):
\begin{align*}
& \left\Vert  X_{1, 2}^{4d} X_{1, 3}^{10(d+1)} \left( X_{2,  3}^{4 (d+1)} X_{2,  1}^{9d}  X_{3, 1}^{10(d+1)} \right) X_{3,  2}^{4d} -S_{d + 1} \right\Vert_{\mathcal{U} (\mathcal{E}_{uc} (\delta_0))}  \leq \\
& \left\Vert  X_{1, 2}^{4d} X_{1, 3}^{10(d+1)} \left( X_{2,  3}^{4 (d+1)} X_{2,  1}^{4(d+1)}  X_{3, 1}^{10(d+1)} \right) X_{3,  2}^{4d} -S_{d + 1} \right\Vert_{\mathcal{U} (\mathcal{E}_{uc} (\delta_0))} +  (9d- 4 (d+1)) C r^{\sqrt{4(d+1)}}.
\end{align*}

Bottom line: The ``norm cost'' of this step is $2 (9d- 4 (d+1)) C r^{\sqrt{4(d+1)}}  \leq 10 d C r^{\sqrt{d}}$.  \\

\textbf{Step 3 (Switch moves on $H_{1,3},  H_{3,1}$):} Preforming switch moves on $H_{1,3}$ and $H_{3,1}$ with $d_1 = 4d,  d_2 = 4(d+1),  d_3 = 10 (d+1)$,  we get by Lemma \ref{norm cost bounds lemma}(1) that
\begin{align*}
& \left\Vert  \left( X_{1, 2}^{4d} X_{1, 3}^{10(d+1)} X_{2,  3}^{4 (d+1)} \right) \left( X_{2,  1}^{4(d+1)}  X_{3, 1}^{10(d+1)} X_{3,  2}^{4d} \right) -S_{d + 1} \right\Vert_{\mathcal{U} (\mathcal{E}_{uc} (\delta_0))} \leq \\
& \left\Vert  \left(  X_{2,  3}^{4 (d+1)} X_{1, 3}^{10(d+1)} X_{1, 2}^{4d} \right) \left( X_{3,  2}^{4d}   X_{3, 1}^{10(d+1)} X_{2,  1}^{4(d+1)} \right) -S_{d + 1} \right\Vert_{\mathcal{U} (\mathcal{E}_{uc} (\delta_0))} + 16 \left(\frac{1}{2} \right)^{2 (d+1)}.
\end{align*}

Bottom line: The ``norm cost'' of this step is $16 \left(\frac{1}{2} \right)^{2 (d+1)} \leq 16 d r^{\sqrt{d}}$ (by the choice of $r_2$ in the proof of Theorem \ref{main ineq thm} it follows that $r \geq \frac{1}{2}$).  \\

\textbf{Step 4 (Up moves on $H_{1,2},  H_{3,2}$):} For $d_1 = 10 (d+1),  d_2 = 4d,  d_3 = 4d,  d_4 = 9 (d+1)$,  the conditions of Lemma \ref{norm cost bounds lemma}(2) are fulfilled and thus we can preform the following up move on $H_{1,2}$: 
\begin{align*}
& \left\Vert  X_{2,  3}^{4 (d+1)} \left( X_{1, 3}^{10(d+1)} X_{1, 2}^{4d}  X_{3,  2}^{4d}  \right)   X_{3, 1}^{10(d+1)} X_{2,  1}^{4(d+1)} -S_{d + 1} \right\Vert_{\mathcal{U} (\mathcal{E}_{uc} (\delta_0))}  \leq \\
& \left\Vert  X_{2,  3}^{4 (d+1)} \left( X_{1, 3}^{10(d+1)} X_{1, 2}^{9(d+1)}  X_{3,  2}^{4d}  \right)   X_{3, 1}^{10(d+1)} X_{2,  1}^{4(d+1)} -S_{d + 1} \right\Vert_{\mathcal{U} (\mathcal{E}_{uc} (\delta_0))} + (9 (d+1)-4d) C r^{\sqrt{4d}}.
\end{align*}
After that,  we preform an up move on $H_{3,2}$  with $d_1 = 9(d+1), d_2 = 10 (d+1), d_3 = 4d,  d_4 = 9(d+1)$ (using Lemma \ref{norm cost bounds lemma}(2) again):
\begin{align*}
& \left\Vert  X_{2,  3}^{4 (d+1)} X_{1, 3}^{10(d+1)} \left(  X_{1, 2}^{9(d+1)}  X_{3,  2}^{4d}   X_{3, 1}^{10(d+1)} \right)  X_{2,  1}^{4(d+1)} -S_{d + 1} \right\Vert_{\mathcal{U} (\mathcal{E}_{uc} (\delta_0))}  \leq \\
& \left\Vert  X_{2,  3}^{4 (d+1)} X_{1, 3}^{10(d+1)} \left(  X_{1, 2}^{9(d+1)}  X_{3,  2}^{9(d+1)}   X_{3, 1}^{10(d+1)} \right)  X_{2,  1}^{4(d+1)} -S_{d + 1} \right\Vert_{\mathcal{U} (\mathcal{E}_{uc} (\delta_0))}  + (9 (d+1)-4d) C r^{\sqrt{4d}} = \\
& \left\Vert  S_{d+1}  -S_{d + 1} \right\Vert_{\mathcal{U} (\mathcal{E}_{uc} (\delta_0))}  + (9 (d+1)-4d) C r^{\sqrt{4d}} = (9 (d+1)-4d) C r^{\sqrt{4d}}.
\end{align*}

Bottom line: The ``norm cost'' of this step is $2 (9 (d+1)-4d) C r^{\sqrt{4d}} \leq 12 d C r^{\sqrt{d}}$.  \\

Using the bounds of the norm costs at each step, we deduce that 
$$ \left\Vert T_d  -S_{d + 1} \right\Vert_{\mathcal{U} (\mathcal{E}_{uc} (\delta_0))} \leq 20 C d r^{\sqrt{d}} + 10 d C r^{\sqrt{d}}+ 16 d r^{\sqrt{d}} + 12 d C r^{\sqrt{d}} = (42C + 16) d r^{\sqrt{d}},$$
as needed.
\end{proof}

After this,  we can prove Theorem \ref{relative prop T thm}:
\begin{proof}
Fix $\delta_0 : (0,2] \rightarrow (0,1]$. 

Define $h_d \in \Prob_c (\SL_3 (\mathbb{Z}))$ by 
$$h_d = 
\begin{cases}
T_d & d \text{ is odd} \\
S_d & d \text{ is even}
\end{cases}.$$

By Lemma \ref{step ineq lemma},  there are $L> 0,  0 \leq r <1$ such that for every $d$,
$$\Vert h_d - h_{d+1} \Vert_{\mathcal{U} (\mathcal{E}_{uc} (\delta_0))} \leq d L r^{\sqrt{d}}.$$ 
Thus $\lbrace h_d \rbrace_{d \in \mathbb{N}}$ is a Cauchy sequence with respect to $\Vert . \Vert_{\mathcal{U} (\mathcal{E}_{uc} (\delta_0))}$ and it has a limit that we will denote $f \in C_{\mathcal{U} (\mathcal{E}_{uc} (\delta_0))}$.  We note that for every odd $d$, 
\begin{dmath*}
\Vert (e-e_{1,2} (1)) h_d \Vert_{\mathcal{U} (\mathcal{E}_{uc} (\delta_0))} \leq 
\left\Vert (e-e_{1,2} (1)) X_{1, 2}^{4d} \right\Vert_{\mathcal{U} (\mathcal{E}_{uc} (\delta_0))}  \leq \\
\left\Vert \frac{1}{2^{4d}} (e- e_{1,2} (2^{4d}))  \right\Vert_{\mathcal{U} (\mathcal{E}_{uc} (\delta_0))}  \leq  
\frac{1}{2^{4d-1}}.
\end{dmath*}

Therefore $\Vert (e-e_{1,2} (1)) f \Vert_{\mathcal{U} (\mathcal{E}_{uc} (\delta_0))} =0$. This implies that for every $(\pi, \B) \in \mathcal{U} (\mathcal{E}_{uc} (\delta_0))$,  $\pi (e_{1,2} (1) f) =  \pi (f)$  and thus $\im (\pi (f)) \subseteq \B^{\pi (\langle e_{1,2} (1) \rangle)}$.  Similarly, for every even $d$, 
\begin{dmath*}
\Vert (e-e_{2,3} (1)) h_d \Vert_{\mathcal{U} (\mathcal{E}_{uc} (\delta_0))} \leq 
\left\Vert (e-e_{1,2} (1)) X_{2,3}^{4d} \right\Vert_{\mathcal{U} (\mathcal{E}_{uc} (\delta_0))}  \leq \\
\left\Vert \frac{1}{2^{4n}} (e- e_{1,2} (2^{4d}))  \right\Vert_{\mathcal{U} (\mathcal{E}_{uc} (\delta_0))}  \leq  
\frac{1}{2^{4d-1}},
\end{dmath*}
and thus for every $(\pi, \B) \in \mathcal{U} (\mathcal{E}_{uc} (\delta_0))$,  $\im (\pi (f)) \subseteq \B^{\pi (\langle e_{2,3} (1) \rangle)}$.  It follows that for every $(\pi, \B) \in \mathcal{U} (\mathcal{E}_{uc} (\delta_0))$, 
$\im (\pi (f)) \subseteq \B^{\pi (\langle e_{1,2} (1), e_{2,3} (1) \rangle)} = \B^{\pi (\UT_3 (\mathbb{Z}))}$ as needed.
\end{proof}

As a corollary,  we get Theorem \ref{relative prop T - intro thm} that appeared in the introduction:
\begin{corollary}
\label{relative prop T for UT coro}
The pairs $(\SL_3 (\mathbb{Z}), \UT_3 (\mathbb{Z}))$ and $(\SL_3 (\mathbb{Z}), \LT_3 (\mathbb{Z}))$ have relative property $(T_{\B})$ for every uniformly convex Banach space $\B$.  
\end{corollary}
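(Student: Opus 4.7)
The plan is to simply combine the two results established above. Given a uniformly convex Banach space $\B$ with modulus of convexity $\delta$, I would take $\delta_0 = \delta$, so that $\B \in \mathcal{E}_{uc}(\delta_0)$. This is the only ``bridge'' step required: the single-space statement of the corollary is immediately embedded into the class-wise framework developed in \cref{Banach prop T sec} and \cref{Relative Banach property sec}.

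With $\delta_0$ fixed, Theorem \ref{relative prop T thm} applies and gives that each of the pairs $(\SL_3(\mathbb{Z}), \UT_3(\mathbb{Z}))$ and $(\SL_3(\mathbb{Z}), \LT_3(\mathbb{Z}))$ has relative property $(T^{\proj}_{\mathcal{E}_{uc}(\delta_0)})$. Concretely, this supplies a sequence $h_n \in \Prob_c(\SL_3(\mathbb{Z}))$ that converges in $\Vert \cdot \Vert_{\mathcal{U}(\mathcal{E}_{uc}(\delta_0))}$ to some $f \in C_{\mathcal{U}(\mathcal{E}_{uc}(\delta_0))}(\SL_3(\mathbb{Z}))$ whose image, under every $\pi \in \mathcal{U}(\mathcal{E}_{uc}(\delta_0))$, lies in the subspace of $\UT_3(\mathbb{Z})$-invariant (resp.\ $\LT_3(\mathbb{Z})$-invariant) vectors.

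Now I would invoke Proposition \ref{relative T^proj implies relative T prop}, applied to the class $\mathcal{E} = \mathcal{E}_{uc}(\delta_0)$. Since $\B \in \mathcal{E}_{uc}(\delta_0)$, that proposition yields relative property $(T_\B)$ for both pairs. There is no obstacle at all here; the whole content of this corollary is already contained in the combination of the two earlier results, and the only subtlety is the trivial observation that every individual uniformly convex Banach space belongs to $\mathcal{E}_{uc}(\delta_0)$ for an appropriate choice of $\delta_0$ (namely its own modulus of convexity).
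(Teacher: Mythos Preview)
Your proposal is correct and is exactly the paper's approach: the paper's proof simply reads ``The proof readily follows from Theorem \ref{relative prop T thm} and Proposition \ref{relative T^proj implies relative T prop},'' and you have merely made explicit the trivial observation that any uniformly convex $\B$ lies in $\mathcal{E}_{uc}(\delta_0)$ when $\delta_0$ is taken to be its own modulus of convexity.
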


\begin{proof}
The proof readily follows from Theorem \ref{relative prop T thm} and Proposition \ref{relative T^proj implies relative T prop}.
\end{proof}

\section{Banach property (T) for $\SL_3 (\mathbb{Z})$}
\label{SL sec}

In this section,  we will prove our main result regarding the Banach property (T) of $\SL_3 (\mathbb{Z})$.

We fix the following terminology:  an elementary subgroup of $\SL_n (\mathbb{Z})$ is a subgroup of the form $E_{i,j} = \lbrace e_{i,j} (a) : a \in \mathbb{Z} \rbrace$ for some $1 \leq i,j \leq n,  i \neq j$.  A theorem of Carter and Keller is that these subgroups boundedly generate $\SL_n (\mathbb{Z})$:
\begin{theorem} \cite[Main Theorem]{CK}
\label{bounded gene result thm}
Let $n \geq 3$.  The group $\SL_n (\mathbb{Z})$ is boundedly generated by all the elementary subgroups. 
\end{theorem}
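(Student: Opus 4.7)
The plan is to adopt the strategy of Carter and Keller: given $A \in \SL_n(\mathbb{Z})$, reduce $A$ to the identity by a bounded number (depending only on $n$) of elementary matrix multiplications on the left and right, inducting on $n$. First, I would handle the single step of bringing the first column of $A$ to $(1, 0, \ldots, 0)^T$; once this is achieved, left multiplication by matrices $e_{1,j}(-m)$ kills the rest of the first row, the matrix becomes block-diagonal $\mathrm{diag}(1, A')$ with $A' \in \SL_{n-1}(\mathbb{Z})$, and induction on $n$ concludes. So the whole theorem reduces to a uniformly bounded column-reduction statement: any primitive vector $(a_1, \ldots, a_n)^T \in \mathbb{Z}^n$ can be reduced to $(1, 0, \ldots, 0)^T$ in at most $C(n)$ elementary row operations.

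The heart of the argument, and where $n \geq 3$ is essential, is the column-reduction step. The plan is to use the third (or any additional) coordinate as a ``regulator'' enabling a Dirichlet-type trick: after a few preliminary elementary moves ensuring $\gcd(a_2, a_3) = 1$ and that $a_1$ is nonzero, I would replace $a_2$ by $a_2 + k a_3$ for an integer $k$ chosen so that the resulting value is a prime $p$ lying in the arithmetic progression $1 \pmod{|a_1|}$. Dirichlet's theorem on primes in arithmetic progressions guarantees such a $k$ exists, but it is used only \emph{qualitatively} (just that such a prime exists), so no quantitative dependence on the $a_i$ enters the bound. Once $a_2 = p$ with $p \equiv 1 \pmod{|a_1|}$, a bounded number of further elementary operations reduces the first coordinate to $1$ (using that the integer $(p-1)/|a_1|$ is a legitimate multiplier on $a_2$) and then clears the remaining coordinates $a_2, \ldots, a_n$ to zero.

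The main obstacle is precisely this column-reduction step: the naive Euclidean-algorithm approach requires $O(\log \max |a_i|)$ moves, which is \emph{unbounded} in the entries of $A$. For $n = 2$, bounded generation by elementary subgroups in fact fails (as can be seen via Mennicke-symbol or cusp-counting arguments for the modular group), so the presence of a third coordinate is not a mere technicality but the essential ingredient permitting the Dirichlet trick. One must further handle degenerate configurations (some $a_i = 0$, all $a_i$ sharing small common factors, etc.) by a uniform case analysis using elementary moves among the coordinates $a_3, \ldots, a_n$ to put the vector in ``general position''; a careful bookkeeping of the number of moves at each stage then yields an explicit bound $\nu(n)$ of polynomial order in $n$, independent of the entries.
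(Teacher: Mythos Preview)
The paper does not prove this theorem; it is quoted from Carter--Keller \cite{CK} and invoked as a black box. So there is no in-paper proof to compare against --- your proposal is a sketch of the Carter--Keller argument itself, and I will assess it on those terms.

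Your column-reduction step is essentially correct and is indeed the opening move in \cite{CK}: with $n \geq 3$, a single application of Dirichlet's theorem brings a primitive column to $e_1$ in boundedly many elementary row operations (after arranging $\gcd(a_1,a_3)=\gcd(a_2,a_3)=1$, make $a_2$ a prime $p \equiv 1 \pmod{|a_1|}$, then subtract $(p-1)/a_1$ times $a_1$ from $a_2$ to get $a_2=1$, then clear). The gap is the induction. After reducing column $1$ and clearing row $1$ you reach $\mathrm{diag}(1, A')$ with $A' \in \SL_{n-1}(\mathbb{Z})$, and for $n = 3$ this leaves $A' \in \SL_2(\mathbb{Z})$ --- which, as you yourself note, is \emph{not} boundedly generated by its own elementaries. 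So your induction has no base case. Nor can you simply re-run the Dirichlet trick on column $2$ of $\mathrm{diag}(1, A')$: the only candidate for a ``third coordinate'' in that column is the zero in row $1$, and activating it via a column operation from column $1$ and then performing row operations to reduce column $2$ will destroy the already-reduced first column, with no a priori bound on how many such rounds are needed.

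This residual $\SL_2$-inside-$\SL_3$ problem is precisely the heart of the Carter--Keller proof. Their mechanism is the Mennicke symbol $[b,a]$ and its multiplicativity relations; combining these with further applications of Dirichlet (and, over $\mathbb{Z}$, arguments tantamount to quadratic reciprocity) they show $[b,a]=1$ for all coprime pairs, which amounts to writing the embedded $\SL_2(\mathbb{Z})$ as a bounded product of $\SL_3(\mathbb{Z})$-elementaries. Your sketch captures the straightforward half of the argument but omits this decisive step.
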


This allows us to prove the following theorem:
\begin{theorem}
\label{T for SL_3 Z  thm}
For every super-reflexive Banach space $\B$,  the group $\SL_3 (\mathbb{Z})$ has property $(T_\B)$.
\end{theorem}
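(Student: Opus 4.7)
The proof will combine the three main ingredients already assembled in the paper: the relative Banach property $(T_\B)$ for the pairs $(\SL_3(\mathbb{Z}), H_{i,k})$ from Theorem \ref{relative prop T thm} (via Corollary \ref{relative prop T for UT coro}), the bounded generation of $\SL_3(\mathbb{Z})$ by elementary subgroups from Theorem \ref{bounded gene result thm}, and the implication ``bounded generation plus relative property $(T_\B)$ implies property $(T_\B)$'' from Theorem \ref{bounded generation imply prop T thm}. The plan is essentially a book-keeping argument.

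First, by the discussion in \cref{Banach property $(T)$ for super-reflexive Banach spaces}, it is enough to establish property $(T_\B)$ for every uniformly convex Banach space $\B$, since on any super-reflexive $\B$ we may replace the norm by an equivalent uniformly convex one without altering the isometric representation (only the isometry group needs to be preserved up to conjugation, which is automatic). So I fix an arbitrary uniformly convex $\B$ and work in that setting.

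Next, I would verify the bounded generation hypothesis of Theorem \ref{bounded generation imply prop T thm} for the family $\{H_{i,k} : 1 \le i,k \le 3,\ i \ne k\}$. For each pair of indices $1 \le i \ne j \le 3$, the elementary subgroup $E_{i,j} = \langle e_{i,j}(1)\rangle$ is contained in $H_{i,k}$ for $k$ the third index (in fact it is also contained in several other $H_{i',k'}$). Thus every elementary subgroup of $\SL_3(\mathbb{Z})$ sits inside some $H_{i,k}$. Combined with Theorem \ref{bounded gene result thm} of Carter--Keller, which gives an integer $\nu_0$ such that every element of $\SL_3(\mathbb{Z})$ is a product of at most $\nu_0$ elements from the union of the elementary subgroups, we conclude that every element of $\SL_3(\mathbb{Z})$ is a product of at most $\nu_0$ elements from $\bigcup_{i \ne k} H_{i,k}$. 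Hence the family $\{H_{i,k}\}$ boundedly generates $\SL_3(\mathbb{Z})$.

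Finally, by Theorem \ref{relative prop T thm} (combined with Proposition \ref{relative T^proj implies relative T prop}), each pair $(\SL_3(\mathbb{Z}), H_{i,k})$ has relative property $(T_\B)$ for every uniformly convex $\B$. All hypotheses of Theorem \ref{bounded generation imply prop T thm} are therefore satisfied, and we obtain property $(T_\B)$ for $\SL_3(\mathbb{Z})$ on every uniformly convex $\B$, hence on every super-reflexive $\B$. There is no genuine obstacle left: all the analytic content was already absorbed into the relative property $(T_\B)$ result of \cref{Relative Banach property sec} and the convexity argument of \cref{Bounded generation and Banach property (T) sec}; the present theorem is simply their synthesis with the classical Carter--Keller bounded generation.
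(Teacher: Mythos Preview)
Your proof is correct and follows essentially the same approach as the paper: reduce to uniformly convex spaces, invoke Carter--Keller bounded generation, apply the relative property $(T_\B)$ result, and conclude via Theorem \ref{bounded generation imply prop T thm}. The only cosmetic difference is that the paper uses just the two subgroups $\UT_3(\mathbb{Z})=H_{1,3}$ and $\LT_3(\mathbb{Z})=H_{3,1}$ (which already contain all elementary subgroups) rather than the full family $\{H_{i,k}\}$, but this changes nothing in the argument.
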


\begin{proof}
As noted above in \cref{Banach property $(T)$ for super-reflexive Banach spaces},   it is enough to prove the result for uniformly convex Banach spaces.  Let $\B$ be some uniformly convex Banach space. 

Denote $\UT_3 (\mathbb{Z})$ and $\LT_3 (\mathbb{Z})$ be the subgroups of uni-upper-triangular and uni-lower-triangular matrices defined above.

By Theorem \ref{bounded gene result thm},  $\UT_3 (\mathbb{Z})$ and $\LT_3 (\mathbb{Z})$ boundedly generate $\SL_3 (\mathbb{Z})$ and by Corollary \ref{relative prop T for UT coro},  $(\SL_3 (\mathbb{Z}), \UT_3 (\mathbb{Z}))$ and  $(\SL_3 (\mathbb{Z}), \LT_3 (\mathbb{Z}))$ both have relative property $(T_\B)$. Thus, by Theorem \ref{bounded generation imply prop T thm},   $\SL_3 (\mathbb{Z})$ has property $(T_{\B})$. 

\begin{comment}
Next,  we assume that $\SL_n (\mathbb{Z})$ has property $(T_{\B})$ and prove that $\SL_{n+1} (\mathbb{Z})$ has property $(T_{\B})$.  Define $H_1, ..., H_{n+1} < \SL_{n+1} (\mathbb{Z})$ as follows: 
$$H_k = \langle E_{i,j} : 1 \leq i, j \leq n+1,  i \neq j, i \neq k, j \neq k \rangle,$$
e.g.,
$$H_1 = \langle E_{i,j} : 2 \leq i, j \leq n+1,  i \neq j \rangle =  \left( \begin{matrix}
1 & 0 \cdots 0 \\
0 &  \\
 \vdots & \SL_n (\mathbb{Z})  \\
 0 & 
\end{matrix} \right),$$
and
$$H_{n+1} = \langle E_{i,j} : 1 \leq i, j \leq n,  i \neq j \rangle = \left( \begin{matrix}
 & 0 \\
\SL_n (\mathbb{Z}) & \vdots \\
 & 0 \\
 0 \cdots 0  &1
\end{matrix} \right).$$

All the subgroups $H_1, ...,H_{n+1}$ are isomorphic to $\SL_n (\mathbb{Z})$ and thus by the induction assumption have property $(T_{\B})$.  It follows that the pairs $(\SL_{n+1} (\mathbb{Z}), H_1),...,(\SL_{n+1} (\mathbb{Z}), H_{n+1})$ all have relative property $(T_{\B})$.   We note that for every $1 \leq i, j \leq n+1,  i \neq j$ it holds that $E_{i,j} \subseteq H_1 \cup ...  \cup H_{n+1}$.   Thus, by Theorem \ref{bounded gene result thm}, $H_1,  ...,  H_{n+1}$ boundedly generate $\SL_{n+1} (\mathbb{Z})$ and by Theorem \ref{bounded generation imply prop T thm},  $\SL_{n+1} (\mathbb{Z})$ has property $(T_{\B})$.
\end{comment}
\end{proof}

\begin{remark}
At this point,  it is also possible to give a bounded generation proof that shows that for that for every super-reflexive Banach space $\B$ and every $n \geq 3$,  the group $\SL_n (\mathbb{Z})$ has property $(T_\B)$ via induction on $n$ (using the $\SL_3 (\mathbb{Z})$ case as the basis of the induction).  We will not do this here,  since this result will follow from our general treatment of Banach property $(T)$ for simple Lie groups below. 
\end{remark}

\begin{comment}
As a corollary,  we get Theorem \ref{T for SL_n R intro thm} that appeared in the introduction:
\begin{corollary}
\label{T for SL_n R  coro}
For every $n \geq 3$ and every super-reflexive Banach space $\B$,  the group $\SL_n (\mathbb{R})$ and all its lattices have property $(T_\B)$.
\end{corollary}

\begin{proof}
The proof readily follows from Theorem \ref{T for SL_n Z  thm} and Theorem \ref{lattice has T_B iff G has T_B thm}.
\end{proof}
\end{comment}

\section{Banach property $(T)$ for simple Lie groups}
\label{Banach property $(T)$ for simple Lie groups}

The aim of the section is to prove Theorem \ref{H rank intro thm} that appeared in the introduction.  We start by stating Howe-Moore's Theorem for reflexive  Banach spaces:
\begin{theorem}
{[Howe-Moore's Theorem for reflexive  Banach spaces \cite{Veech}]}
Let $\B$ be a reflexive Banach space and $G$ be a connected simple real Lie group with a finite center.  Then for every continuous linear isometric representation $\pi : G \rightarrow O (\B)$ such that $\B^{\pi (G)} = \lbrace 0 \rbrace$ it holds for every $\xi \in \B$ and $\eta \in \B^*$ that 
$$\lim_{g \rightarrow \infty} \langle \pi  (g) \xi , \eta \rangle =0.$$
\end{theorem}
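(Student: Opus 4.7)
The plan is to adapt the classical Howe--Moore argument for unitary representations to the reflexive Banach space setting, replacing Hilbert-space orthogonal decomposition tools with weak sequential compactness. Assume for contradiction that there exist $\xi \in \B$, $\eta \in \B^*$, and a sequence $g_n \to \infty$ in $G$ with $\lim_n \langle \pi(g_n)\xi, \eta \rangle \neq 0$ (after passing to a subsequence).

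First I would reduce to the case of a sequence tending to infinity inside a maximal split abelian subgroup. Using the Cartan decomposition $G = KAK$, write $g_n = k_n a_n k_n'$ with $k_n, k_n' \in K$ and $a_n$ in the closed positive Weyl chamber $\overline{A^+}$. Compactness of $K$ lets us pass to a subsequence with $k_n \to k$ and $k_n' \to k'$; since $g_n \to \infty$, we have $a_n \to \infty$ in $\overline{A^+}$. Replacing $\xi$ by $\pi(k')\xi$ and $\eta$ by $\pi^*(k^{-1})\eta$ reduces the goal to showing $\langle \pi(a_n)\xi, \eta \rangle \to 0$.

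The core step is a Banach-space Mautner argument. Since $\pi$ is isometric, $\{\pi(a_n)\xi\}$ is bounded in $\B$, so reflexivity and the Eberlein--\v{S}mulian theorem yield a weakly convergent subsequence $\pi(a_n)\xi \rightharpoonup \xi'$ with $\langle \xi', \eta \rangle \neq 0$, hence $\xi' \neq 0$. Symmetrically, a further subsequence gives $\pi^*(a_n^{-1})\eta \rightharpoonup \eta'$ in $\B^*$ with $\langle \xi, \eta' \rangle \neq 0$. Let $U^+$ be the contracting horospherical subgroup of $a_n$, so that $a_n^{-1} u a_n \to e$ for every $u \in U^+$. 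Strong continuity of $\pi$ gives $\pi(a_n^{-1} u a_n)\xi \to \xi$ in norm, and then
\begin{equation*}
\pi(u)\pi(a_n)\xi - \pi(a_n)\xi = \pi(a_n)\bigl(\pi(a_n^{-1} u a_n)\xi - \xi\bigr) \to 0
\end{equation*}
in norm (using that $\pi(a_n)$ is an isometry). Taking the weak limit gives $\pi(u)\xi' = \xi'$ for every $u \in U^+$, and symmetrically $\pi^*(u)\eta' = \eta'$ for every $u$ in the opposite horospherical subgroup $U^-$.

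To conclude, I would exploit the simplicity of $G$. The closed stabilizer $\{g \in G : \pi(g)\xi' = \xi'\}$ contains $U^+$; dually, the stabilizer of $\eta'$ contains $U^-$. By iterating the Mautner-type propagation one sees that this invariance extends to the closed subgroup generated by $U^+$ and $U^-$, which equals $G$ for a connected simple Lie group with finite center (via the Bruhat decomposition). Hence either $\xi' \in \B^{\pi(G)}$ or $\eta' \in (\B^*)^{\pi^*(G)}$; both alternatives contradict $\B^{\pi(G)} = \{0\}$ in view of Proposition \ref{B-fixed + B' prop}. The main obstacle is carrying out the Mautner step without the symmetry of a Hilbert inner product: this is circumvented by observing that the isometry of $\pi(a_n)$ promotes the norm convergence $\pi(a_n^{-1} u a_n)\xi - \xi \to 0$ to a norm convergence after multiplying by $\pi(a_n)$, which is then preserved under weak limits.
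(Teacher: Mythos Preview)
The paper does not prove this statement; it is quoted from Veech \cite{Veech}. Your sketch follows the natural direct Mautner-type strategy. The reduction via $KAK$ to a sequence $a_n\to\infty$ in $\overline{A^+}$, the extraction of a nonzero weak limit $\xi'$ of $\pi(a_n)\xi$ using reflexivity, and the deduction of $U^+$-invariance of $\xi'$ from the norm identity $\|\pi(u)\pi(a_n)\xi-\pi(a_n)\xi\|=\|\pi(a_n^{-1}ua_n)\xi-\xi\|\to 0$ are all carried out correctly. (One should also pass to a further subsequence so that the set of positive roots $\alpha$ with $\alpha(\log a_n)\to+\infty$ stabilises, in order that the contracting horospherical subgroup $U^+$ be well-defined for the sequence.)

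The genuine gap is the final step. You obtain $\xi'\in\B^{\pi(U^+)}\setminus\{0\}$ and $\eta'\in(\B^*)^{\pi^*(U^-)}\setminus\{0\}$, but these are distinct vectors in different spaces, and the claimed ``iteration'' does not promote either to $G$-invariance. Concretely, to get $U^-$-invariance of $\xi'$ by the same contraction argument you would need $\xi'$ to already be fixed by some $a\in A$ with $a^nva^{-n}\to e$ for $v\in U^-$, and no $A$-invariance of $\xi'$ has been established; nor does the $U^-$-invariance of the unrelated dual vector $\eta'$ help here. In the unitary case this difficulty is bypassed via the spectral theorem for the abelian group $A$, which lets one analyse the matrix coefficient on $A$ directly without ever producing an invariant vector; that tool is unavailable for general reflexive $\B$. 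Veech's route is different: reflexivity makes every matrix coefficient $g\mapsto\langle\pi(g)\xi,\eta\rangle$ a weakly almost periodic function on $G$, and the structure theory of such functions on connected non-compact simple Lie groups with finite center then forces the coefficient to lie in $C_0(G)$ once its invariant mean vanishes, which follows from $\B^{\pi(G)}=\{0\}$. A minor additional point: you invoke Proposition~\ref{B-fixed + B' prop}, which in the paper is stated only for super-reflexive spaces, whereas here $\B$ is merely assumed reflexive.
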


\begin{corollary}
\label{same B decomp coro}
Let $\B$ be a uniformly convex Banach space and $G$ be a connected simple real Lie group with a finite center.   For every unbounded subgroup $H < G$ and every continuous linear isometric representation $\pi :  G \rightarrow O (\B)$ it holds that $\B^{\pi (G))} =  \B^{\pi ( H)}$ and that $\B ' (\pi) = \B ' (\left. \pi \right\vert_H)$. 
\end{corollary}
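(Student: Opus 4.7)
The inclusions $\B^{\pi(G)} \subseteq \B^{\pi(H)}$ and $\B'(\pi) \supseteq \B'(\left.\pi\right|_H)$ are immediate from the definitions (fewer invariant vectors for the ambient group means more invariant functionals for $H$ than for $G$, and hence a smaller annihilator). So the content lies in the reverse inclusions.

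For the first equality, my plan is as follows. Given $\xi \in \B^{\pi(H)}$, use the decomposition $\B = \B^{\pi(G)} \oplus \B'(\pi)$ from Proposition \ref{B-fixed + B' prop} to write $\xi = \xi_0 + \xi_1$. Since $\xi_0$ is $G$-fixed, it is in particular $H$-fixed, so $\xi_1 = \xi - \xi_0$ lies in $\B'(\pi) \cap \B^{\pi(H)}$. The subspace $\B'(\pi)$ is closed and $G$-invariant, and applying Proposition \ref{B-fixed + B' prop} again to the restriction of $\pi$ to $\B'(\pi)$ yields that the $G$-fixed vectors in $\B'(\pi)$ are trivial. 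Now since $H$ is unbounded in $G$, choose any sequence $h_n \in H$ with $h_n \to \infty$. By Howe--Moore applied to the (reflexive) subrepresentation on $\B'(\pi)$, for every $\eta \in (\B'(\pi))^*$ we have $\langle \pi(h_n)\xi_1, \eta\rangle \to 0$; but $\pi(h_n)\xi_1 = \xi_1$ for all $n$, so $\langle \xi_1, \eta\rangle = 0$ for all $\eta$, forcing $\xi_1 = 0$. Hence $\xi = \xi_0 \in \B^{\pi(G)}$.

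For the second equality I would simply dualize. Since $\B$ is super-reflexive, so is $\B^*$, and the contragredient representation $\pi^*: G \to O(\B^*)$ is continuous (as noted in the preliminaries). The restriction $\left.\pi^*\right|_H$ is clearly the contragredient of $\left.\pi\right|_H$. Applying the first equality (already proved) to $\pi^*$ on $\B^*$ gives
\[
(\B^*)^{\pi^*(G)} = (\B^*)^{\pi^*(H)}.
\]
Taking annihilators in $\B$ of both sides yields exactly $\B'(\pi) = \B'(\left.\pi\right|_H)$, as required.

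The main subtlety, and really the only non-formal step, is the application of Howe--Moore inside the invariant subspace $\B'(\pi)$: one must verify that this subspace is both closed and $G$-invariant and that its $G$-fixed subspace is trivial, so that Howe--Moore (in the reflexive Banach space version cited above) can be invoked with $h_n \to \infty$ in $G$. This is handled cleanly by a second application of Proposition \ref{B-fixed + B' prop} to the restricted representation.
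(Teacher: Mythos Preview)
Your proof is correct and follows essentially the same route as the paper: reduce to showing $\B^{\pi(H)} \cap \B'(\pi) = \{0\}$ via Howe--Moore on the subrepresentation $\B'(\pi)$ (which has no $G$-fixed vectors by the direct sum decomposition), then dualize and take annihilators for the second equality. The only cosmetic difference is that you invoke Proposition~\ref{B-fixed + B' prop} a second time to see $(\B'(\pi))^{\pi(G)} = \{0\}$, whereas this already follows immediately from the direct sum $\B = \B^{\pi(G)} \oplus \B'(\pi)$.
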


\begin{proof}
Let $\xi \in  \B^{\pi ( H)} \cap \B ' (\pi)$ and denote $\pi '$ the restriction of $\pi$ to $\B ' (\pi)$.  Fix $h_n \in H$ tending to infinity.  By Howe-Moore it follows for every $\eta \in \B^*$ that
$$\langle \xi, \eta \rangle = \lim_n \langle \pi (h_n) \xi,  \eta \rangle =0,$$
thus $\xi = 0$.  This shows that $\B^{\pi ( G)} =  \B^{\pi ( H)}$.  Similarly,  $\B^{\pi^* ( G)} =  \B^{\pi^* ( H)}$. Recall that $\B '$ and  $\B ' (\left. \pi \right\vert_H)$ are the annihilators of  $\B^{\pi^* ( G)}$ and $\B^{\pi^* ( H)}$,  and thus $\B ' (\pi) = \B ' (\left. \pi \right\vert_H)$.  
\end{proof}

\begin{corollary}
\label{inherited prop T for Lie coro}
Let $\B$ be a uniformly convex Banach space and $G$ be a connected simple real Lie group with a finite center.  For every unbounded subgroup $H < G$,  if $H$ has property $(T_{\B})$, then so does $G$.
\end{corollary}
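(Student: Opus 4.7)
The plan is to reduce the statement to Definition~\ref{BFGM def} and then transfer a Kazhdan pair from $H$ to $G$ via the identification $\B'(\pi)=\B'(\pi\vert_H)$ supplied by Corollary~\ref{same B decomp coro}. Concretely, fix a continuous linear isometric representation $\pi:G\to O(\B)$. According to Definition~\ref{BFGM def}, establishing property $(T_\B)$ for $G$ amounts to producing a compact set $K\subseteq G$ and $\varepsilon>0$ such that
\[
\sup_{g\in K}\Vert\pi(g)\xi-\xi\Vert\geq \varepsilon\Vert\xi\Vert\qquad\forall\xi\in\B'(\pi).
\]

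Next, I would invoke Corollary~\ref{same B decomp coro} to observe that $\B'(\pi)=\B'(\pi\vert_H)$; in particular this subspace is preserved by $\pi\vert_H$, and the restricted representation $(\pi\vert_H)'$ of $H$ on $\B'(\pi\vert_H)$ is nothing but $\pi'\vert_H$. Since by hypothesis $H$ has property $(T_\B)$, applying Definition~\ref{BFGM def} to the representation $\pi\vert_H:H\to O(\B)$ yields a compact set $K_0\subseteq H$ and a constant $\varepsilon_0>0$ such that every $\xi\in \B'(\pi\vert_H)=\B'(\pi)$ satisfies $\sup_{h\in K_0}\Vert\pi(h)\xi-\xi\Vert\geq \varepsilon_0\Vert\xi\Vert$.

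Finally, since $H$ carries the subspace topology inherited from $G$, any compact subset of $H$ remains compact in $G$, so $(K_0,\varepsilon_0)$ already serves as a Kazhdan pair for $\pi'$ on $\B'(\pi)$. This verifies Definition~\ref{BFGM def} for $G$ and completes the proof.

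The argument is essentially a bookkeeping step: there is no real analytic obstacle, because Corollary~\ref{same B decomp coro} (itself a consequence of Howe--Moore in the reflexive Banach setting) does the heavy lifting by ensuring that the ``reduced'' pieces $\B'(\pi)$ and $\B'(\pi\vert_H)$ coincide. The only point that warrants attention is the compactness transfer $K_0\subseteq H\subseteq G$, which requires that the topology on $H$ used in the hypothesis ``$H$ has $(T_\B)$'' be compatible with the one it inherits from $G$; under the standing convention of subspace topology this is immediate.
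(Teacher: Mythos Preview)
Your argument is correct and follows essentially the same route as the paper's own proof. The only cosmetic difference is that the paper invokes Observation~\ref{B^pi(G)=0 obs} to reduce to representations with $\B^{\pi(G)}=\{0\}$ (and then uses the equality $\B^{\pi(G)}=\B^{\pi(H)}$ from Corollary~\ref{same B decomp coro}), whereas you work directly with Definition~\ref{BFGM def} and the identification $\B'(\pi)=\B'(\pi\vert_H)$ from the same corollary; these are two equivalent phrasings of the same idea.
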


\begin{proof}
By Observation \ref{B^pi(G)=0 obs}, it is enough to show that every continuous isometric representation $\pi : G \rightarrow O( \B)$ with $\B^{\pi (G)} = \lbrace 0 \rbrace$ does not have almost invariant vectors.  Fix $\pi : G \rightarrow O( \B)$ with $\B^{\pi (G)} = \lbrace 0 \rbrace$.

By Corollary \ref{same B decomp coro},  it holds that $\B^{\pi ( H)}  = \lbrace 0 \rbrace$.  Thus it follows that $\left. \pi \right\vert_H$ does not have ($H$-)almost invariant vectors and as a result $\pi$ does not have ($G$-)almost invariant vectors. 
\end{proof}

An immediate consequence of this corollary is the following:
\begin{corollary}
\label{Banach prop T for SL_3 R coro}
The group $\SL_3 (\mathbb{R})$ has property $(T_\B)$ for uniformly convex Banach space $\B$.
\end{corollary}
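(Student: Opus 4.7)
The plan is to apply Corollary \ref{inherited prop T for Lie coro} with $G = \SL_3(\mathbb{R})$ and $H = \SL_3(\mathbb{Z})$. First I would verify the hypotheses on $G$: the group $\SL_3(\mathbb{R})$ is a connected simple real Lie group with trivial (hence finite) center, so it falls within the scope of Corollary \ref{inherited prop T for Lie coro}. Next I would observe that $\SL_3(\mathbb{Z})$ is a lattice in $\SL_3(\mathbb{R})$ (in particular it is a discrete, non-cocompact subgroup), so it is an unbounded subgroup of $\SL_3(\mathbb{R})$ in the sense required.

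Then I would invoke Theorem \ref{T for SL_3 Z  thm}, which provides property $(T_\B)$ for $\SL_3(\mathbb{Z})$ for every super-reflexive (and in particular every uniformly convex) Banach space $\B$. Applying Corollary \ref{inherited prop T for Lie coro} to the unbounded subgroup $\SL_3(\mathbb{Z}) < \SL_3(\mathbb{R})$ then yields property $(T_\B)$ for $\SL_3(\mathbb{R})$ for every uniformly convex $\B$, which is the desired conclusion.

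There is essentially no obstacle here since all the heavy lifting has already been done: the Howe–Moore argument transferring property $(T_\B)$ from an unbounded subgroup to the ambient simple Lie group is precisely the content of Corollary \ref{inherited prop T for Lie coro}, and the property $(T_\B)$ for the lattice $\SL_3(\mathbb{Z})$ is Theorem \ref{T for SL_3 Z  thm}. The only minor point to mention explicitly is why $\SL_3(\mathbb{Z})$ is unbounded in $\SL_3(\mathbb{R})$: it is an infinite discrete subgroup, so it cannot be relatively compact.
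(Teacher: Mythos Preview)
Your proposal is correct and matches the paper's proof exactly: the paper simply says that combining Theorem \ref{T for SL_3 Z  thm} with Corollary \ref{inherited prop T for Lie coro} yields property $(T_\B)$ for $\SL_3(\mathbb{R})$. Your additional remarks (that $\SL_3(\mathbb{R})$ is connected simple with finite center, and that $\SL_3(\mathbb{Z})$ is unbounded as an infinite discrete subgroup) just make explicit the hypotheses being verified.
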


\begin{proof}
For every uniformly convex Banach space $\B$,  combining Theorem \ref{T for SL_3 Z  thm} and Corollary \ref{inherited prop T for Lie coro} yields that $\SL_3 (\mathbb{R})$ has property $(T_\B)$.
\end{proof}

A standard argument allows us to use this corollary in order to prove Theorem \ref{H rank intro thm} that appeared in the introduction :
\begin{theorem}
Let $G$ be a connected simple real Lie group with a finite center and $\mathfrak{g}$ the Lie algebra of $G$.  If  $\mathfrak{g}$ contains $\mathfrak{sl}_3 (\mathbb{R})$ as a Lie sub-algebra, then $G$ all its lattices have property $(T_{\B})$ for every uniformly convex Banach space $\B$.  
\end{theorem}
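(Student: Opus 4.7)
The plan is to exhibit an unbounded closed subgroup of $G$ with property $(T_\B)$ for every uniformly convex $\B$, invoke Corollary \ref{inherited prop T for Lie coro} to deduce the same for $G$, and then apply Corollary \ref{passing to lattices coro} to the lattices of $G$.

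Let $\mathfrak{h} \cong \mathfrak{sl}_3 (\mathbb{R})$ be the given subalgebra of $\mathfrak{g}$, and let $H \subseteq G$ be the connected analytic subgroup corresponding to $\mathfrak{h}$. Since $\mathfrak{h}$ is semisimple, a classical theorem (due to Yosida and Mostow) guarantees that $H$ is closed in $G$. As a connected Lie group with Lie algebra $\mathfrak{sl}_3 (\mathbb{R})$, $H$ is isomorphic either to $\SL_3 (\mathbb{R})$ or to its universal cover $\widetilde{\SL_3 (\mathbb{R})}$; these differ by a finite central subgroup $Z \cong \mathbb{Z}/2\mathbb{Z}$. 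Being locally isomorphic to $\SL_3 (\mathbb{R})$, $H$ is non-compact and hence unbounded as a subgroup of $G$.

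The next step is to show that $H$ has property $(T_\B)$ for every uniformly convex $\B$. If $H \cong \SL_3 (\mathbb{R})$ this is exactly Corollary \ref{Banach prop T for SL_3 R coro}. The cover case reduces to the previous one by averaging over the central $Z$: for a continuous isometric $\pi : H \to O(\B)$, the norm-one operator $P = \tfrac{1}{|Z|}\sum_{z \in Z}\pi(z)$ is a projection onto $\B^{\pi(Z)}$ which, by centrality of $Z$, commutes with $\pi(H)$; the induced representation $\bar\pi$ on $\B^{\pi(Z)}$ therefore factors through $H/Z \cong \SL_3 (\mathbb{R})$. Given a hypothetical sequence of $H$-almost invariant unit vectors $\xi_n \in \B'(\pi)$, one has $\|P\xi_n - \xi_n\| \to 0$, so $P\xi_n$ yields almost invariant vectors for $\bar\pi$ inside the image $P(\B'(\pi)) \subseteq \B^{\pi(Z)}$, which lies in the non-fixed part of $\B^{\pi(Z)}$ for the $\SL_3 (\mathbb{R})$-action; this contradicts property $(T_\B)$ for $\SL_3 (\mathbb{R})$.

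Applying Corollary \ref{inherited prop T for Lie coro} to the unbounded subgroup $H < G$ then yields property $(T_\B)$ for $G$ itself, and Corollary \ref{passing to lattices coro} transfers this to every lattice of $G$ for all super-reflexive (equivalently, uniformly convex) Banach spaces. The main technical point is the finite cover case: one has to verify that the averaging projection $P$ is compatible with the decomposition $\B = \B^{\pi(H)} \oplus \B'(\pi)$ of Proposition \ref{B-fixed + B' prop}, so that the almost invariant vectors $P\xi_n$ genuinely live in the non-fixed piece for $\bar\pi$. This compatibility follows routinely from $P$ commuting with every $\pi(g)$, $g \in H$, together with the analogous averaging applied to the dual representation $\pi^*$ on the reflexive space $\B^*$.
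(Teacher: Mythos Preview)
Your argument is correct and follows the same overall strategy as the paper: exhibit an unbounded subgroup with property $(T_\B)$, invoke Corollary~\ref{inherited prop T for Lie coro}, and then pass to lattices via Corollary~\ref{passing to lattices coro}. The only difference lies in where the finite-center issue is handled. The paper first replaces $G$ by its adjoint group $\mathrm{Ad}(G)=G/Z(G)$ (using without proof that $(T_\B)$ lifts through a finite central quotient), so that the subgroup in question becomes a \emph{quotient} of $\SL_3(\mathbb{R})$ via its algebraic simply connected covering, and then uses that $(T_\B)$ passes to quotients. You instead keep $G$ and work with the analytic subgroup $H$ directly, treating the case $H\cong\widetilde{\SL_3(\mathbb{R})}$ by averaging over the order-two center; this is precisely the same finite-center averaging, just applied at the level of $H$ rather than of $G$. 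Your route is slightly more self-contained (no appeal to the algebraic covering formalism of \cite{Mar}), while the paper's is terser; neither gains any real generality over the other. The closedness of $H$ via Yosida--Mostow is correct but not actually needed, since Corollary~\ref{inherited prop T for Lie coro} only requires $H$ unbounded and the restriction $\pi|_H$ continuous for the intrinsic Lie topology on $H$.
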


\begin{proof}
We note since $Z(G)$ is finite,  it follows that if $\rm Ad (G) = G / Z(G)$ has property $(T_{\B})$ for every uniformly convex Banach space $\B$,  then $G$ has  property $(T_{\B})$ for every uniformly convex Banach space $\B$.  Thus we can replace $G$ with $\rm Ad (G) = G / Z(G)$ and assume that $G$ is algebraic. 

Fix a uniformly convex Banach space $\B$.  By Corollary \ref{inherited prop T for Lie coro},  in order to prove that $G$ has property $(T_\B)$ it is enough to show that there is an unbounded subgroup $G$ such that has property $(T_{\B})$. 

By our assumption on $\mathfrak{g}$, the group $G$ contains a subgroup $H$ whose (algebraic) simply connected covering (see \cite[Definition 1.4.12]{Mar}) is isomorphic to $\SL_3 (\mathbb{R})$.  Thus there is $\varphi : \SL_3 (\mathbb{R}) \rightarrow H$ such that the cokernel of $\varphi$ is finite.  Since property $(T_{\B})$ is inherited by quotients,  it follows from Corollary \ref{Banach prop T for SL_3 R coro} that $\varphi (H)$ has property $(T_{\B})$ as needed.

Last,  note that by Corollary \ref{passing to lattices coro},  if $G$ has property $(T_\B)$ for every uniformly convex Banach space $\B$,  then every lattice $\Gamma < G$ has property $(T_\B)$ for every uniformly convex Banach space $\B$.
\end{proof}

\section{Applications}
\label{app sec}

\subsection{Banach fixed point property for simple Lie groups}
\label{subsec alg groups}

Let $\B$ be a Banach space and $G$ be a topological group.  An affine isometric action of $G$ on $\B$ is a continuous homomorphism $\rho : G \rightarrow \Isom_{aff} (\B)$, where $\Isom_{aff} (\B)$ denotes the group of affine isometric automorphisms of $\B$.  We recall such that $\rho$ is of the form 
$$\rho (g) \xi = \pi  (g) \xi + c(g),  \forall \xi \in \B$$
where $\pi : G  \rightarrow O(\B)$ is a continuous isometric linear representation that is called the linear part of $\rho$ and $c: G \rightarrow \B$ is a continuous $1$-cocycle into $\pi$, i.e.,  it is a continuous map such that for every $g,h \in G$, 
$$c (gh) = c (g) + \pi (g) c (h).$$

The group $G$ is said to have \textit{property $(F_{\B})$} if every affine isometric action of $G$ on $\B$ admits a fixed point.  

Below, we will prove Theorem \ref{H rank intro thm 2} that states that a simple Lie group whose Lie algebra contains $\mathfrak{sl}_4 (\mathbb{R})$ has property $(F_{\B})$ for every super-reflexive Banach space.  For this we will state the following results: 
\begin{theorem}
{Metric Mautner phenomenon,  \cite[Theorem 1.3]{BG}}
\label{metric maut thm}
Let $G$ be a connected simple real Lie group with a finite center.  Assume that $G$ acts isometrically and continuously on a metric space $(\mathbb{X},d)$.  If there is a non-compact point stabilizer (of the action of $G$ on $\mathbb{X}$),  then the action of $G$ on  $\mathbb{X}$ admits a fixed point. 
\end{theorem}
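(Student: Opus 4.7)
The plan is to invoke a metric analogue of Mautner's contraction phenomenon: use the Cartan decomposition of a non-compact sequence in the stabilizer to show that the stabilizer must absorb a non-trivial horospherical subgroup, and then leverage simplicity of $G$ to upgrade this to a global fixed point.

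Let $x_0 \in \mathbb{X}$ be a point with non-compact stabilizer $H = \{g \in G : g \cdot x_0 = x_0\}$, which is closed since the action is continuous. Fix a Cartan decomposition $G = K A^+ K$ with $K$ a maximal compact subgroup and $A^+$ a closed positive Weyl chamber in a maximal $\mathbb{R}$-split torus $A$. Choose $h_n \in H$ escaping to infinity, write $h_n = k_n a_n l_n$ with $k_n,l_n \in K$ and $a_n \in A^+$, and pass to a subsequence along which $k_n \to k$, $l_n \to l$ in $K$ and the direction $a_n/\|\log a_n\|$ converges inside the Weyl chamber. This selects a non-trivial horospherical (unipotent) subgroup $U^- \leq G$ contracted by $a_n$, meaning $a_n^{-1} u a_n \to e$ for every $u \in U^-$.

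The central step is to show $k U^- k^{-1} \subseteq H$. For $u \in U^-$, set
\[
g_n \;:=\; h_n^{-1}(kuk^{-1})h_n \;=\; l_n^{-1} a_n^{-1}\bigl(k_n^{-1} k\, u\, k^{-1} k_n\bigr) a_n l_n .
\]
As $k_n \to k$, the middle factor tends to $u$, and contraction by $a_n$ on $U^-$ then forces $g_n \to l^{-1}\cdot e \cdot l = e$. Using that $h_n$ is an isometry fixing $x_0$,
\[
d\bigl(x_0,(kuk^{-1})x_0\bigr) \;=\; d\bigl(h_n x_0,(kuk^{-1})h_n x_0\bigr) \;=\; d\bigl(x_0, g_n x_0\bigr) \;\xrightarrow[n\to\infty]{}\; 0,
\]
by continuity of the orbit map at $e$. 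Hence $(kuk^{-1})x_0 = x_0$ and $k U^- k^{-1} \leq H$.

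To conclude, I would upgrade this single horospherical inclusion to $H = G$. Running the same Mautner contraction on the sequence $h_n^{-1} \in H$ (which also escapes to infinity, with the expanding and contracting horospherical roles reversed) produces an opposite-type horospherical subgroup $k' U^+ (k')^{-1} \leq H$. The closed subgroup of $G$ generated by a pair of opposite non-trivial horospherical subgroups contains, by the Bruhat decomposition, an open neighborhood of the identity, and hence all of $G$ by connectedness; so $H = G$ and $x_0$ is a $G$-fixed point. The main obstacle I anticipate lies precisely in this last step: one must verify that the two horospherical subgroups extracted from the $(k_n,l_n)$-limits of $h_n$ and of $h_n^{-1}$ really are in sufficiently opposite position (they could a priori degenerate). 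I would handle this by varying the choice of escaping sequence in $H$ and using that $H$ itself conjugates the Mautner-extracted unipotents, so that taking enough cluster points produces a rich enough family of horospherical subgroups in $H$ to generate $G$.
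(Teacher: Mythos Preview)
The paper does not prove this theorem at all; it is quoted from \cite[Theorem~1.3]{BG} and used as a black box in \cref{subsec alg groups}. So there is nothing in the paper to compare against, and I will simply assess your argument on its own.

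Your strategy is the right one, but the central contraction step has a genuine gap. You write
\[
g_n \;=\; l_n^{-1}\, a_n^{-1}\bigl(k_n^{-1} k\, u\, k^{-1} k_n\bigr)\, a_n\, l_n
\]
and assert that since the middle factor tends to $u\in U^-$, conjugation by $a_n^{-1}$ contracts it to $e$. This does not follow: the element $k_n^{-1} k\, u\, k^{-1} k_n$ is only \emph{near} $U^-$, not in it, and its component in the expanding directions of $a_n^{-1}(\cdot)a_n$ may blow up faster than $k_n\to k$ corrects it. The standard remedy is to let the test element vary with $n$: take $g_n := k_n u k_n^{-1}$, so that $k_n^{-1} g_n k_n = u$ exactly and
\[
h_n^{-1} g_n h_n \;=\; l_n^{-1}\bigl(a_n^{-1} u\, a_n\bigr) l_n \;\longrightarrow\; e .
\]
Then $d(x_0, g_n x_0) = d\bigl(x_0,(h_n^{-1} g_n h_n)x_0\bigr)\to 0$, while $g_n \to k u k^{-1}$, so continuity of the orbit map gives $k u k^{-1}\in H$ as you wanted.

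Your worry about the final step is also justified, and the fix you sketch is not quite enough. Running the same argument on $h_n^{-1}=l_n^{-1}a_n^{-1}k_n^{-1}$ yields $l^{-1} U^{+} l \subseteq H$, not $k U^{+} k^{-1}$; the two unipotent groups $k U^{-} k^{-1}$ and $l^{-1} U^{+} l$ are conjugates of opposite horosphericals by \emph{unrelated} elements of $K$, and there is no reason they should together generate an open set. A cleaner route is to iterate the Mautner step inside $H$: once $H$ contains a non-trivial one-parameter unipotent subgroup, embed it via Jacobson--Morozov into a copy of $\mathfrak{sl}_2$, use the associated semisimple element to contract further root subgroups into $H$, and repeat. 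Simplicity of $\mathfrak{g}$ then forces the Lie algebra of $H$ to be all of $\mathfrak{g}$, hence $H=G$. This avoids any ``opposite position'' bookkeeping.
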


\begin{lemma}\cite[Lemma 5.6]{BFGM}
\label{H1 times H2 lemma}
Let $H_1 \times H_2$ be a topological group that acts by a continuous affine isometric action $\rho$ on a uniformly convex Banach space $\B$.  If the linear part of $\rho$ restricted to $H_1$ does not have almost invariant vectors, then the action of $H_2$ on $\B$ admits a fixed point.
\end{lemma}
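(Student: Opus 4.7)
The plan is to produce a unique $H_1$-fixed point for $\rho|_{H_1}$ and then observe that commutativity of $H_1$ and $H_2$ forces it to be $H_2$-invariant as well. The key rigidity input is that the hypothesis rules out nonzero $\pi(H_1)$-invariant vectors: if $v \neq 0$ satisfies $\pi(h_1)v = v$ for all $h_1 \in H_1$, then $v/\Vert v \Vert$ is an invariant (hence almost invariant) unit vector, contradicting the assumption. So $\B^{\pi(H_1)} = \lbrace 0 \rbrace$.

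The main analytic step is to show that the affine action $\rho|_{H_1}$ admits at least one fixed point. This is where uniform convexity and the ``no almost invariant vectors'' hypothesis are used essentially, and this is the main obstacle. I would invoke the BFGM vanishing theorem (\cite[Theorem A]{BFGM}): for a continuous linear isometric representation of a topological group on a uniformly convex Banach space without almost invariant vectors, every continuous $1$-cocycle is a coboundary, equivalently, every affine isometric action with that linear part has a fixed point. Applied to $\pi|_{H_1}$ and the cocycle $c|_{H_1}$ coming from $\rho|_{H_1}$, this produces some $v_0 \in \B$ with $\rho(h_1) v_0 = v_0$ for every $h_1 \in H_1$. (Alternatively, one could argue directly that no almost invariant vectors forces the $H_1$-orbit of $0$ to be bounded, and then take the Chebyshev center of the closed convex hull, which is well-defined and unique by uniform convexity.)

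Having produced $v_0$, the remaining arguments are purely algebraic. First, $v_0$ is unique as an $H_1$-fixed point: if $v_0'$ were another, subtracting $\pi(h_1) v_0' + c(h_1) = v_0'$ and $\pi(h_1) v_0 + c(h_1) = v_0$ yields $\pi(h_1)(v_0' - v_0) = v_0' - v_0$ for all $h_1$, so $v_0' - v_0 \in \B^{\pi(H_1)} = \lbrace 0 \rbrace$. Thus the $H_1$-fixed set of $\rho$ is exactly $\lbrace v_0 \rbrace$. Second, for every $h_2 \in H_2$ and every $h_1 \in H_1$, commutativity $h_1 h_2 = h_2 h_1$ gives
\[
\rho(h_1)\rho(h_2) v_0 = \rho(h_1 h_2) v_0 = \rho(h_2 h_1) v_0 = \rho(h_2)\rho(h_1) v_0 = \rho(h_2) v_0,
\]
so $\rho(h_2) v_0$ is also $H_1$-fixed. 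By the uniqueness just established, $\rho(h_2) v_0 = v_0$ for every $h_2 \in H_2$, and $v_0$ is the required $H_2$-fixed point.
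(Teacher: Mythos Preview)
Your proof is correct and follows exactly the argument in \cite[Lemma 5.6]{BFGM}; the present paper does not supply its own proof of this lemma but merely cites that reference. One minor remark: the input you call ``\cite[Theorem A]{BFGM}'' is not quite the right citation for the general implication ``no almost invariant vectors on a uniformly convex space $\Rightarrow$ every cocycle is bounded''; in BFGM this is a preliminary proposition (their discussion around Proposition 2.12 and the Ryll--Nardzewski/Chebyshev-center argument) rather than Theorem A, which concerns specific groups. Your alternative direct argument via bounded orbits and the Chebyshev center is in fact closer to how BFGM actually prove Lemma 5.6.
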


Last,  using \cite{BFGM} we can show that for higher rank Lie groups property $(F_\B)$ for every uniformly convex $\B$ is inherited to (and from) passing to lattices:
\begin{proposition}
\label{prop F_B passes to lattices}
Let $G$ be a connected higher rank simple real Lie group and $\Gamma < G$ a lattice.  The group $G$ has property $(F_\B)$ for every uniformly convex Banach space if and only if the group $\Gamma$ has property $(F_\B)$ for every uniformly convex Banach space $\B$.
\end{proposition}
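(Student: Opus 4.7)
The plan is to establish the two implications separately, treating the elementary direction first and then addressing the main content of the proposition.

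For the direction $G \Rightarrow \Gamma$: given a uniformly convex Banach space $\B$ and an affine isometric action $\rho$ of $\Gamma$ on $\B$ with linear part $\pi$ and cocycle $c$, I would induce $\rho$ up to an affine isometric action $\widetilde\rho$ of $G$ on the Banach space $L^2(G/\Gamma;\B)$. By Theorem \ref{L^2 sum of uc spaces thm} together with the finiteness of the $G$-invariant measure on $G/\Gamma$, the induced space is again uniformly convex, so by hypothesis $G$ has property $(F_{L^2(G/\Gamma;\B)})$; the resulting $G$-fixed vector descends by the standard evaluation/integration procedure to a $\Gamma$-fixed vector in $\B$. This is a verbatim adaptation of the classical Hilbert argument (see \cite[Section 8]{BFGM}) once the uniformly convex analogue of the induced-space construction is in place, which is precisely what Theorem \ref{L^2 sum of uc spaces thm} provides.

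For the direction $\Gamma \Rightarrow G$: let $\rho$ be an affine isometric action of $G$ on a uniformly convex $\B$ with linear part $\pi$ and cocycle $c$. By $\Gamma$-property $(F_\B)$ the restriction $\rho|_\Gamma$ admits a fixed vector, so after translating the origin I may assume $c(\gamma)=0$ for every $\gamma\in\Gamma$. Using Proposition \ref{B-fixed + B' prop} I would split $\B = \B^{\pi(G)} \oplus \B'(\pi)$. On $\B^{\pi(G)}$ the cocycle $c$ becomes a continuous homomorphism from the perfect group $G$ into an abelian Banach space, hence vanishes identically. On $\B'(\pi)$ the linear part has no invariant vectors and $G$ has property $(T_\B)$ by Theorem \ref{H rank intro thm}; combining this with the vanishing of $c$ on the (finite-covolume) subgroup $\Gamma$, one deduces that the $G$-orbit of the origin is bounded, and a bounded orbit in a uniformly convex space produces a $G$-fixed vector via Ryll-Nardzewski (or directly via the unique minimum-norm point in the closed convex hull).

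The main obstacle is the boundedness of the $G$-orbit when $\Gamma$ is non-cocompact. When $\Gamma$ is cocompact with compact fundamental domain $K$, the cocycle identity $c(\gamma k) = \pi(\gamma) c(k)$ together with $c|_\Gamma = 0$ immediately gives $\|c(g)\| \leq \sup_{k\in K}\|c(k)\| < \infty$ for every $g\in G$, so the orbit is bounded at once. In the non-cocompact case I would instead work with the linear part on $L^2(G/\Gamma;\B'(\pi))$, which is uniformly convex by Theorem \ref{L^2 sum of uc spaces thm}, apply property $(T_\B)$ of $G$ on this induced space to derive an integrability/boundedness estimate for $c$ over a fundamental domain, and then propagate the fixed-point information from $\Gamma$ to all of $G$ using the metric Mautner phenomenon (Theorem \ref{metric maut thm}) and Lemma \ref{H1 times H2 lemma}. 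This is the Banach-space translation of the reduction scheme in \cite[Sections 3 and 8]{BFGM}, and once the uniformly convex ingredients (Proposition \ref{B-fixed + B' prop} and Theorem \ref{L^2 sum of uc spaces thm}) are available the argument goes through unchanged.
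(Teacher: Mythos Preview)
Your two directions are swapped in difficulty, and each has a genuine issue.

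\textbf{The direction $\Gamma \Rightarrow G$} is in fact the trivial one: it holds for \emph{any} locally compact group and any lattice, with no appeal to property $(T_\B)$, higher rank, or Mautner. This is \cite[Proposition 8.8]{BFGM}, first bullet, and the paper simply cites it. Your elaborate scheme invoking Theorem \ref{H rank intro thm}, the decomposition $\B = \B^{\pi(G)}\oplus\B'(\pi)$, and the metric Mautner phenomenon is unnecessary---and, as you yourself concede, incomplete in the non-cocompact case. Moreover, Theorem \ref{H rank intro thm} requires $\mathfrak{sl}_3(\mathbb{R})\subset\mathfrak{g}$, which is not part of the hypothesis here (``higher rank'' alone).

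\textbf{The direction $G \Rightarrow \Gamma$} is where the content lies, and your sketch has a real gap: the induction of an \emph{affine} isometric action from $\Gamma$ to $G$ does not automatically produce a cocycle landing in $L^2(G/\Gamma;\B)$. For this one needs the lattice to be $2$-integrable in the sense of \cite[Definition 8.2]{BFGM}; see \cite[Proposition 8.8]{BFGM}, second bullet. For non-uniform lattices---which is exactly the case of interest, e.g.\ $\SL_n(\mathbb{Z}) < \SL_n(\mathbb{R})$---this is a nontrivial fact due to Shalom \cite{Shalom3}, and it is precisely where the higher-rank hypothesis enters. Your proposal omits this step entirely and treats the induction as ``verbatim'', which it is not. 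Once $2$-integrability is in hand, the rest of your argument for this direction (induce, apply Theorem \ref{L^2 sum of uc spaces thm} to get uniform convexity of $L^2(G/\Gamma;\B)$, use the hypothesis on $G$, descend) is correct and matches the paper.
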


\begin{proof}
In \cite[Proposition 8.8]{BFGM} it was shown that for any locally compact group $G$,  any lattice $\Gamma < G$ and any uniformly convex Banach space $\B$ the following holds:
\begin{itemize}
\item If $\Gamma$ has property $(F_\B)$,  then $G$ has property $(F_\B)$.
\item If $G$ has property $(F_{L^2 (G / \Gamma ; \B)})$ and $\Gamma$ is $2$-integrable (see \cite[Definition 8.2]{BFGM}), then $\Gamma$ has property $(F_\B)$.
\end{itemize}
Thus it readily follows that if $\Gamma$ has property $(F_\B)$ for every uniformly convex Banach space $\B$, then so does $G$. 

In the other direction,  we use the following two facts: First,  Shalom \cite{Shalom3} showed that every lattice in a connected higher rank simple Lie groups in $2$-integrable.  Second,  by Theorem \ref{L^2 sum of uc spaces thm},  $L^2 (G / \Gamma ; \B)$ is uniformly convex.  
\end{proof}

Our starting point towards proving Theorem \ref{H rank intro thm 2} it to prove $\SL_4 (\mathbb{R})$ has property $(F_\B)$ for every super-reflexive Banach space $\B$:

\begin{theorem}
\label{FP for SL_4 thm}
For every super-reflexive Banach space $\B$,  the group $\SL_{4} (\mathbb{R})$ has property $(F_{\B})$.  
\end{theorem}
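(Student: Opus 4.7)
The plan is to prove Theorem \ref{FP for SL_4 thm} by combining property $(T_\B)$ for $\SL_3(\mathbb{R})$, the commuting-pair Lemma \ref{H1 times H2 lemma}, and the metric Mautner phenomenon (Theorem \ref{metric maut thm}). Passing to an equivalent uniformly convex norm on $\B$, let $\rho$ be a continuous affine isometric action of $G=\SL_4(\mathbb{R})$ on $\B$, with linear part $\pi$ and cocycle $c$.

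First I would reduce to the case $\B^{\pi(G)}=0$. Using Proposition \ref{B-fixed + B' prop}, decompose $\B=\B^{\pi(G)}\oplus \B'(\pi)$ and split $c=c_0+c_1$ accordingly. The component $c_0\colon G\to \B^{\pi(G)}$ is a continuous group homomorphism, since $\pi$ acts trivially on $\B^{\pi(G)}$; because $G$ is a connected simple non-abelian (hence perfect) Lie group, $c_0\equiv 0$, and so $c$ takes values in $\B'(\pi)$. Consequently $\rho$ preserves each affine slice $\xi_0+\B'(\pi)$ with $\xi_0\in\B^{\pi(G)}$, and finding a fixed point on the Banach space $\B'(\pi)$ yields one on $\B$; we may therefore replace $\B$ by $\B'(\pi)$ and assume $\B^{\pi(G)}=0$.

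Next I would construct a commuting pair inside $G$. Let $H_1$ be $\SL_3(\mathbb{R})$ embedded as the upper-left $3\times 3$ block (with a $1$ in the $(4,4)$-entry) and let $H_2=\{\operatorname{diag}(e^{t},e^{t},e^{t},e^{-3t}):t\in\mathbb{R}\}$; a direct matrix calculation shows that every element of $H_1$ commutes with every element of $H_2$, and $H_2$ is non-compact. By Corollary \ref{same B decomp coro}, the Banach Howe-Moore property gives $\B^{\pi(H_1)}=\B^{\pi(G)}=0$, so $\B'(\pi|_{H_1})=\B$. Since $H_1\cong\SL_3(\mathbb{R})$ has property $(T_\B)$ by Corollary \ref{Banach prop T for SL_3 R coro}, the representation $\pi|_{H_1}$ has no almost invariant vectors on $\B$. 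Applying Lemma \ref{H1 times H2 lemma} to the affine isometric action of $H_1\times H_2$ on $\B$ then produces a point $\xi_0\in\B$ fixed by $\rho(H_2)$.

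Finally, the $\rho$-stabilizer of $\xi_0$ in $G$ contains the non-compact subgroup $H_2$; since $G$ is a connected simple real Lie group with finite center, the metric Mautner phenomenon (Theorem \ref{metric maut thm}) produces a global fixed point of $\rho$, as required. The main obstacle is verifying the hypothesis of Lemma \ref{H1 times H2 lemma}: property $(T_\B)$ of $H_1$ only yields no almost invariant vectors on $\B'(\pi|_{H_1})$ rather than on all of $\B$, so the reduction to $\B^{\pi(G)}=0$ combined with Corollary \ref{same B decomp coro} is essential to upgrade this to the full-space statement demanded by the lemma.
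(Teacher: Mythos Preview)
Your proof is correct and follows essentially the same route as the paper: reduce to $\B^{\pi(G)}=0$, exhibit a commuting pair $\SL_3(\mathbb{R})\times(\text{non-compact abelian})$ inside $\SL_4(\mathbb{R})$, use property $(T_\B)$ for $\SL_3(\mathbb{R})$ together with Corollary~\ref{same B decomp coro} to feed Lemma~\ref{H1 times H2 lemma}, and conclude via the metric Mautner phenomenon. The only cosmetic differences are that the paper packages the commuting pair as $H\cong\GL_3(\mathbb{R})\cong\SL_3(\mathbb{R})\times\mathbb{R}^*$ (your $H_2$ is the identity component of the scalar factor) and handles the reduction to $\B^{\pi(G)}=0$ by citing \cite[5.c]{BFGM} (compact abelianization) rather than spelling out the perfectness argument as you do.
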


\begin{proof}
By \cite[Proposition 2.13]{BFGM} it is enough to consider uniformly convex Banach spaces.   

%Also,  by Proposition \ref{prop F_B passes to lattices}, it is enough to prove that $\SL_n (\mathbb{R})$ has property $(F_{\B})$ for every uniformly convex Banach space $\B$ and it will follow that all its lattices have property $(F_{\B})$ for every uniformly convex Banach space $\B$.  

Fix some uniformly convex Banach space $\B$.  
%We will start by showing that $\SL_4 (\mathbb{R})$ has property $(F_{\B})$.  

Let $\rho: \SL_{4} (\mathbb{R})  \rightarrow \rm Isom_{aff} (\B)$ be a continuous affine isometric action of $\SL_4 (\mathbb{R})$ on $\B$ with a linear part $\pi$.  As in \cite[5.c]{BFGM},  the fact that $\SL_4 (\mathbb{R})$ has a compact abelianization allows us to assume that $\B^{\pi (\SL_4 (\mathbb{R}))} = \lbrace 0 \rbrace$.   

Define $H < \SL_{4} (\mathbb{R})$ to be the subgroup
$$H = \left\lbrace \left( \begin{matrix}
 & 0 \\
A & 0 \\
 & 0 \\
 0 \hspace*{0.1in} 0 \hspace*{0.1in}  0  & \frac{1}{\rm det (A)}
\end{matrix} \right) : A \in \rm GL_{3} (\mathbb{R}) \right\rbrace.$$

We note that $H \cong \rm GL_{3} (\mathbb{R}) \cong \SL_{3} (\mathbb{R}) \times \mathbb{R}^{*}$.  By Corollary \ref{same B decomp coro},  $\B^{\pi (\SL_{3} (\mathbb{R}))} = \lbrace 0 \rbrace$.  Thus, by Corollary \ref{Banach prop T for SL_3 R coro},  the representation $\left.  \pi \right\vert_{\SL_{3} (\mathbb{R})}$ does not have almost invariant vectors and by Lemma \ref{H1 times H2 lemma},  the $\rho$-action of $\mathbb{R}^{*}$ on $\B$ has a fixed point.  It follows from Theorem \ref{metric maut thm} that the $\rho$-action of $\SL_4 (\mathbb{R})$ on $\B$ has a fixed point as needed.
\end{proof}

\begin{theorem}
Let $G$ be a connected simple real Lie group with a finite center and $\mathfrak{g}$ be the Lie algebra of $G$.   If  $\mathfrak{g}$ contains $\mathfrak{sl}_4 (\mathbb{R})$ as a Lie sub-algebra, then $G$ and any lattice $\Gamma < G$ have property $(F_{\B})$ for every super-reflexive Banach space $\B$.  
\end{theorem}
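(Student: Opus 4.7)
The strategy mirrors the proof of Theorem \ref{H rank intro thm}, substituting Theorem \ref{FP for SL_4 thm} for Corollary \ref{Banach prop T for SL_3 R coro} and using the metric Mautner phenomenon (Theorem \ref{metric maut thm}) in place of Corollary \ref{inherited prop T for Lie coro} to propagate a partial fixed point to a global one. First I would reduce to the adjoint case: if $G/Z(G)$ has property $(F_\B)$ for every super-reflexive $\B$, then so does $G$. Indeed, given a continuous affine isometric action $\rho$ of $G$ on a super-reflexive $\B$, the finite subgroup $Z(G)$ admits a fixed point by averaging, so its fixed set $\B^{\rho(Z(G))}$ is a non-empty closed affine subspace isometric to a closed subspace of $\B$, which is itself super-reflexive; since $Z(G)\triangleleft G$, the action descends to $G/Z(G)$ on this affine subspace, whose fixed point is also fixed by $G$.

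Assuming now that $G$ is algebraic, the hypothesis $\mathfrak{sl}_4(\mathbb{R}) \hookrightarrow \mathfrak{g}$ exactly as in the proof of Theorem \ref{H rank intro thm} yields a closed subgroup $H < G$ whose algebraic simply connected covering is $\SL_4(\mathbb{R})$, together with a continuous homomorphism $\varphi : \SL_4(\mathbb{R}) \to H$ with finite cokernel. Fix a super-reflexive Banach space $\B$ and an affine isometric action $\rho : G \to \Isom_{\mathrm{aff}}(\B)$. The composition $\rho \circ \varphi$ is a continuous affine isometric action of $\SL_4(\mathbb{R})$ on $\B$, and by Theorem \ref{FP for SL_4 thm} it admits a fixed point $\xi_0 \in \B$. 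Since $\varphi$ is surjective onto $\varphi(\SL_4(\mathbb{R}))$, the vector $\xi_0$ is fixed by $\varphi(\SL_4(\mathbb{R}))$ under $\rho$.

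Next I would invoke Theorem \ref{metric maut thm} applied to the isometric $G$-action on the metric space $(\B, d)$ with $d(\xi,\eta) = \Vert \xi - \eta \Vert$. The $\rho$-stabilizer of $\xi_0$ contains $\varphi(\SL_4(\mathbb{R}))$, which is a finite-index closed subgroup of $H$ and hence non-compact (because $H$, being isogenous to $\SL_4(\mathbb{R})$, is non-compact). The metric Mautner phenomenon then yields a global fixed point for the $\rho$-action of $G$, establishing property $(F_\B)$ for $G$. Finally, since $\mathfrak{g}$ contains $\mathfrak{sl}_4(\mathbb{R})$, the real rank of $G$ is at least $3$, so $G$ is a connected higher rank simple real Lie group. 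Proposition \ref{prop F_B passes to lattices} therefore transfers property $(F_\B)$ (for every super-reflexive $\B$) from $G$ to any lattice $\Gamma < G$, concluding the proof.

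The main obstacle, modest as it is, is really a bookkeeping one: verifying that the image $\varphi(\SL_4(\mathbb{R}))$ is genuinely non-compact in $G$ (so that metric Mautner applies), together with the descent through the finite center $Z(G)$. Both are automatic once one unwinds the definitions, and the real content of the proof is concentrated in Theorem \ref{FP for SL_4 thm} and in the passage between lattices and ambient group supplied by Proposition \ref{prop F_B passes to lattices}.
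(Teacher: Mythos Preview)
Your proposal is correct and follows essentially the same route as the paper: reduce to the adjoint (algebraic) case via the finite center, use the $\mathfrak{sl}_4(\mathbb{R})$ hypothesis to produce $\varphi:\SL_4(\mathbb{R})\to H<G$, apply Theorem \ref{FP for SL_4 thm} to obtain a point fixed by the non-compact subgroup $\varphi(\SL_4(\mathbb{R}))$, invoke the metric Mautner phenomenon (Theorem \ref{metric maut thm}) to get a global fixed point, and finally transfer to lattices via Proposition \ref{prop F_B passes to lattices}. The only cosmetic differences are that the paper first reduces super-reflexive to uniformly convex via \cite[Proposition 2.13]{BFGM} and phrases the $\SL_4$ step as ``$\varphi(\SL_4(\mathbb{R}))$ has property $(F_\B)$ since $(F_\B)$ passes to quotients,'' whereas you pull back the action along $\varphi$; these are equivalent.
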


\begin{proof}
By \cite[Proposition 2.13]{BFGM} it is enough to consider uniformly convex Banach spaces.  

We note that since $Z(G)$ is finite,  if $\rm Ad (G) = G / Z(G)$ has property $(F_{\B})$ for every uniformly convex Banach space $\B$,  then $G$ has property $(F_{\B})$ for every uniformly convex Banach space $\B$. Thus we can replace $G$ with $\rm Ad (G) = G / Z(G)$ and assume that $G$ is algebraic. 

By Proposition \ref{prop F_B passes to lattices},  if $G$ has property $(F_{\B})$ for every uniformly convex Banach space $\B$,   then every lattice of $G$ has property $(F_{\B})$ for every uniformly convex Banach space $\B$.  Thus,  it is enough to prove that $G$ has property $(F_{\B})$ for every uniformly convex Banach space $\B$.  

Fix a uniformly convex Banach space $\B$ and a continuous affine isometric action of $G$ on $\B$.  

By our assumption on $\mathfrak{g}$,  the group $G$ contains a subgroup $H$ whose (algebraic) simply connected covering is isomorphic to $\SL_4 (\mathbb{R})$.  Thus there is $\varphi : \SL_4 (\mathbb{R}) \rightarrow H$ such that the cokernel of $\varphi$ is finite.   Observe that property $(F_\B)$ is preserved under passing to quotients and thus by Theorem \ref{FP for SL_4 thm} it follows that $\varphi (\SL_4 (\mathbb{R}))$ has property $(F_\B)$.  It follows that the action of $\varphi (\SL_4 (\mathbb{R}))$ on $\B$ has a fixed point,  so there is a non-compact point stabilizer of the action of $G$ on $\B$.  By Theorem \ref{metric maut thm} it follows that the $G$ action on $\B$ has a fixed point.
\end{proof}

\subsection{Super-expanders}
\label{Super-expanders subsec}
We start by recalling the definition of Mendel and Naor \cite{MendelNaor} for super-expanders. 

Let $\B$ be a Banach space and $\lbrace (V_i,E_i) \rbrace_{i \in \mathbb{N}}$ be a sequence of finite graphs with uniformly bounded degree,  such that $\lim_i \vert V_i \vert = \infty$.  We say that $\lbrace (V_i,E_i) \rbrace_{i \in \mathbb{N}}$ has a \textit{Poincar\'{e} inequality with respect to $\B$} if there are constants $p, \gamma \in (0, \infty)$ such that for every $i \in \mathbb{N}$ and every $\phi : V_i \rightarrow \B$ we have
$$\frac{1}{\vert V_i \vert^2} \sum_{(u,v) \in V_i \times V_i} \Vert \phi (u) - \phi (v) \Vert^p \leq \frac{\gamma}{\vert V_i \vert} \sum_{(x,y) \in E_i} \Vert \phi (x) - \phi (y) \Vert^p.$$

The sequence $\lbrace (V_i,E_i) \rbrace_{i \in \mathbb{N}}$ is called a \textit{super-expander family} if it has a Poincar\'{e} inequality with respect to every super-reflexive Banach space (or equivalently for every uniformly convex Banach space). 

For Cayley graphs, the following proposition of Lafforgue gives a relation between Poincar\'{e} inequality of Cayley graphs and Banach property $(T)$:
\begin{proposition}\cite[Proposition 5.2]{Laff1}
\label{prop T imply exp prop}
Let $\Gamma$ be a finitely generated discrete group and let $\lbrace N_i \rbrace_{i \in \mathbb{N}}$ be a sequence of finite index normal subgroups of $\Gamma$ such that $\bigcap_i N_i = \lbrace 1 \rbrace$.  Also let $\B$ be a Banach space.  If $\Gamma$ has Banach property $(T_{\B '})$ for  $\B ' = \ell^2 (\bigcup_i G/N_i; \B)$,  then for every fixed finite symmetric generating set $S$, the family of Cayley graphs of $\lbrace (G/N_i,S /N_i) \rbrace_{i \in \mathbb{N}}$ has a Poincar\'{e} inequality with respect to $\B$.
\end{proposition}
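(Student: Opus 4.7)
My plan is to lift the individual actions of $\Gamma$ on the finite quotients $\Gamma/N_i$ to a single continuous isometric linear representation $\pi:\Gamma\to O(\B')$, and then to read off the Poincaré inequality from the Kazhdan pair supplied by property $(T_{\B'})$. After passing to an equivalent uniformly convex norm (using Theorem \ref{L^2 sum of uc spaces thm}), I decompose $\B' = \bigoplus_i \ell^2(\Gamma/N_i;\B)$ as an $\ell^2$-sum of $\Gamma$-invariant summands and let $\Gamma$ act on the $i$-th summand by $(\pi_i(g)\phi)(x)=\phi(g^{-1}x)$. Because $\Gamma$ acts transitively on $\Gamma/N_i$, the $\pi_i$-invariants are exactly the constant functions, and dually the $\pi_i^*$-invariants in $\ell^2(\Gamma/N_i;\B^*)\cong\ell^2(\Gamma/N_i;\B)^*$ are the $\B^*$-valued constants. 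By Proposition \ref{B-fixed + B' prop}, the annihilator $\B'(\pi)$ therefore decomposes as an $\ell^2$-sum whose $i$-th component is the subspace of mean-zero functions. In particular, for any $\phi:\Gamma/N_i\to\B$, writing $\bar\phi=\frac{1}{|\Gamma/N_i|}\sum_{x}\phi(x)$ and $\phi_0=\phi-\bar\phi$ produces a vector $\phi_0\in\B'(\pi)$.

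Next, I will invoke the Kazhdan pair $(K,\varepsilon)$ given by the reformulation in Definition \ref{BFGM def}. Since $\Gamma$ is generated by $S$, each $k\in K$ is a word of length at most some $N$ in $S^{\pm 1}$, and a standard telescoping argument yields $\|\pi(k)\phi_0-\phi_0\|\le N\max_{s\in S}\|\pi(s)\phi_0-\phi_0\|$, so $(S,\varepsilon/N)$ is itself a Kazhdan pair for $\B'(\pi)$. Applying this to $\phi_0$ gives
\begin{equation*}
\tfrac{\varepsilon^{2}}{N^{2}}\,\|\phi_0\|^2 \;\le\; \max_{s\in S}\|\pi(s)\phi_0-\phi_0\|^2 \;\le\; \sum_{s\in S}\sum_{x\in\Gamma/N_i}\|\phi(s^{-1}x)-\phi(x)\|^2,
\end{equation*}
and the right-hand side equals, up to a constant depending only on $|S|$, the edge energy $\sum_{(x,y)\in E_i}\|\phi(x)-\phi(y)\|^2$. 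On the other hand, two applications of the triangle inequality give the elementary bound $\sum_{u,v\in\Gamma/N_i}\|\phi(u)-\phi(v)\|^2 \le 4|\Gamma/N_i|\cdot\|\phi_0\|^2$. Combining these two estimates and dividing by $|\Gamma/N_i|^2$ yields the desired Poincaré inequality with $p=2$ and a constant $\gamma$ depending only on $\varepsilon$, $N$, and $|S|$, hence independent of $i$.

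The only non-routine step is the structural identification of $\B'(\pi)$ as the $\ell^2$-sum of the mean-zero subspaces; this relies on the duality $(\ell^2(\Gamma/N_i;\B))^*\cong \ell^2(\Gamma/N_i;\B^*)$ (as in the remark following Theorem \ref{L^2 sum of uc spaces thm}, using reflexivity of $\B$) together with the identification of the $\pi_i^*$-invariants with the $\B^*$-valued constants. Once this is in place, the rest of the proof is genuinely routine — the Kazhdan inequality on one side, the triangle inequality on the other — and the hypothesis $\bigcap_i N_i=\{1\}$ is not used directly; it only ensures that the Cayley graphs of the quotients form a genuinely asymptotic family (in particular $|\Gamma/N_i|\to\infty$).
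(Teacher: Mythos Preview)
The paper does not give its own proof of this proposition; it is quoted verbatim from Lafforgue \cite[Proposition~5.2]{Laff1}. Your argument is the standard one and is correct in outline, but there is one genuine technical slip.

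You invoke Proposition~\ref{B-fixed + B' prop} and Definition~\ref{BFGM def} to identify and work inside $\B'(\pi)$, but both of these require $\B'$ to be super-reflexive (respectively uniformly convex). The proposition as stated allows $\B$ to be an arbitrary Banach space, and your appeal to Theorem~\ref{L^2 sum of uc spaces thm} for ``passing to an equivalent uniformly convex norm'' is a misreading: that theorem says $L^2(X;\B)$ is uniformly convex \emph{provided} $\B$ already is, it does not manufacture uniform convexity out of nothing. So as written your proof only covers the case where $\B$ is super-reflexive. This happens to be the only case the paper actually uses (see the proof of the theorem immediately following the proposition), so your argument suffices for the application, but it does not prove the proposition in the generality stated.

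The fix is painless and avoids super-reflexivity entirely: work with the quotient representation of Definition~\ref{Banach property T def} rather than with $\B'(\pi)$. The invariants $\B'^{\pi(\Gamma)}$ are the blockwise constant functions, and the averaging operator $P\phi=\bar\phi$ is a norm-$\le 1$ projection onto them, so $I-P$ is a norm-$\le 2$ projection onto the mean-zero functions. This gives a $\Gamma$-equivariant isomorphism (with distortion at most $2$) between $\B'/\B'^{\pi(\Gamma)}$ and the mean-zero subspace, and the Kazhdan pair on the quotient transfers to the mean-zero subspace with at most an extra factor of $2$. The remainder of your argument (replacing $K$ by $S$ via word length, the edge-energy bound, and the elementary variance inequality $\sum_{u,v}\|\phi(u)-\phi(v)\|^2\le 4|\Gamma/N_i|\,\|\phi_0\|^2$) then goes through verbatim. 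Your remark that $\bigcap_i N_i=\{1\}$ is not used in the Poincar\'e estimate itself is also correct.
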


A consequence of this proposition and Theorem \ref{T for SL_n R intro thm} implies the following theorem that appeared in the introduction (Theorem \ref{super-exp intro thm}):
\begin{theorem}
Let $n \geq 3$ and let $S$ be a finite generating set of $\SL_n (\mathbb{Z})$ (e.g., $S = \lbrace e_{i,j} (\pm 1) : 1 \leq i, j \leq n, i  \neq j \rbrace$).  Let $\Phi_i : \SL_n (\mathbb{Z}) \rightarrow \SL_n (\mathbb{Z} / i \mathbb{Z})$ be the natural surjective homomorphism.  Then the family of Cayley graphs of $\lbrace (\SL_{n} (\mathbb{Z} / i \mathbb{Z} ),  \Phi_i (S)) \rbrace_{i \in \mathbb{N}}$ is a super-expander family.
\end{theorem}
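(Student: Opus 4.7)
The plan is a direct application of Lafforgue's Proposition~\ref{prop T imply exp prop} with $\Gamma = \SL_n(\mathbb{Z})$ and $N_i = \ker(\Phi_i)$, the principal congruence subgroup of level $i$. The set-theoretic hypotheses are immediate: each $N_i$ is normal and of finite index in $\SL_n(\mathbb{Z})$ because $\SL_n(\mathbb{Z})/N_i \cong \SL_n(\mathbb{Z}/i\mathbb{Z})$ is finite, and $\bigcap_{i \in \mathbb{N}} N_i = \{I\}$ since any integer matrix congruent to the identity modulo every positive integer must equal the identity.

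The substantive task is to verify the property-$(T)$ hypothesis of Proposition~\ref{prop T imply exp prop}. Fix any super-reflexive Banach space $\B$ and set $\widetilde{\B} := \ell^2\bigl(\bigsqcup_i \SL_n(\mathbb{Z}/i\mathbb{Z});\, \B\bigr)$. After re-norming $\B$ by an equivalent uniformly convex norm one has $\B \in \mathcal{E}_{uc}(\delta_0)$ for some modulus $\delta_0$. Theorem~\ref{L^2 sum of uc spaces thm} then supplies a \emph{single} modulus $\delta_0'$ such that $L^2(\SL_n(\mathbb{Z}/i\mathbb{Z});\, \B) \in \mathcal{E}_{uc}(\delta_0')$ for every $i$, and Theorem~\ref{l^2 sum of uc spaces thm} implies that the $\ell^2$-sum $\widetilde{\B}$ is uniformly convex, hence super-reflexive. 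Consequently, Theorem~\ref{T for SL_n R intro thm} yields that $\SL_n(\mathbb{Z})$ has property $(T_{\widetilde{\B}})$.

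Invoking Proposition~\ref{prop T imply exp prop} with the symmetric generating set $S \cup S^{-1}$ now produces a Poincaré inequality with respect to $\B$ for the Cayley graphs $\bigl(\SL_n(\mathbb{Z}/i\mathbb{Z}),\, \Phi_i(S \cup S^{-1})\bigr)$. Replacing $S \cup S^{-1}$ by $S$ alters the edge set by at most a factor of $2$ (inverting some edges), which affects the Poincaré constant $\gamma$ only by a universal multiplicative factor; thus the Poincaré inequality persists for $\bigl(\SL_n(\mathbb{Z}/i\mathbb{Z}),\, \Phi_i(S)\bigr)$. Since $\B$ was an arbitrary super-reflexive Banach space, the family is a super-expander.

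All the substantial work lies in Theorem~\ref{T for SL_n R intro thm}; the present argument is essentially assembly. The one bookkeeping point that must be handled with care is that the modulus of convexity of $\widetilde{\B}$ is controlled in terms of $\delta_0$ alone -- in particular \emph{uniformly in} $i$ -- which is precisely the content of the quantitative Theorems~\ref{L^2 sum of uc spaces thm} and \ref{l^2 sum of uc spaces thm}.
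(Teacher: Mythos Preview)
Your proposal is correct and follows essentially the same route as the paper: verify that the auxiliary space $\widetilde{\B}=\ell^2\bigl(\bigsqcup_i \SL_n(\mathbb{Z}/i\mathbb{Z});\B\bigr)$ is uniformly convex, invoke Theorem~\ref{T for SL_n R intro thm} to get $(T_{\widetilde{\B}})$ for $\SL_n(\mathbb{Z})$, and then apply Proposition~\ref{prop T imply exp prop}. The only difference is cosmetic: the paper applies Theorem~\ref{l^2 sum of uc spaces thm} directly (viewing $\widetilde{\B}$ as an $\ell^2$-sum of copies of $\B$ indexed by the countable set $\bigsqcup_i G/N_i$), so your intermediate passage through Theorem~\ref{L^2 sum of uc spaces thm} is unnecessary, though harmless.
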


\begin{proof}
Let $\B$ be some uniformly convex Banach space.  By Theorem \ref{l^2 sum of uc spaces thm},  $\ell^2 (\bigcup_i G/N_i; \B)$ is a uniformly convex Banach space.  Thus by Theorem \ref{T for SL_n R intro thm} and Proposition \ref{prop T imply exp prop},  the family  of Cayley graphs of $\lbrace (\SL_{n} (\mathbb{Z} / i \mathbb{Z} ),  \Phi_i (S)) \rbrace_{i \in \mathbb{N}}$ is a $\B$-expander family.
%By Corollary \ref{finite l^2 sums of a uc space coro},  there is a function $\delta_0 : (0,2] \rightarrow (0,1]$ such that $\mathcal{E}_{uc} (\delta_0)$ contains all the finite $\ell^2$-sums of $\B$.  By Theorem \ref{T for SL_n Z  thm},  the group $\SL_n (\mathbb{Z})$ has property $(T_{\mathcal{E}_{uc} (\delta_0)})$ and thus by Proposition \ref{prop T imply exp prop},  the family $\lbrace (\SL_{n} (\mathbb{Z} / i \mathbb{Z} ),  S) \rbrace_{i \in \mathbb{N}}$ is a $\B$-expander family.
\end{proof}

It was shown in \cite{warpedcones0, warpedcones1, warpedcones2, warpedcones3} that one can construct super-expanders using warped cones arising from an action of a Banach property (T) group on a compact manifold.  Combining this machinery with our Theorem \ref{T for SL_n R intro thm} also leads to a construction of super-expanders as we will briefly now explain.  Let $(M,d_M)$ be a compact Riemannian manifold  and $\Gamma$ be a finitely generated group with finite symmetric generating set $S$.  Assume that $\Gamma$ acts on $M$ by Lipschitz homeomorphisms.  For $ t>0$, define the $t$-level warped cone denoted $(M, d_\Gamma^t)$ to be the metric space such that $d_\Gamma^t$ is the largest metric satisfying: 
\begin{itemize}
\item $d_\Gamma^t (x,y) \leq t d_M (x,y)$ for every $x,y \in M$.
\item $d_\Gamma^t (x,s.x) \leq 1$ for every $x \in M$ and $s \in S$.
\end{itemize}

\begin{remark}
The metric $d_\Gamma^t$ is dependent on the choice of the generating set,  but this dependence will be irrelevant with respect to our application below,  because of the following fact (see \cite{Roe}): For metrics $d_\Gamma^t$ and $(d_\Gamma^t) '$ that correspond to generating sets $S$ and $S'$,  it holds that $d_\Gamma^t$ and $(d_\Gamma^t)' $ are Lipschitz equivalent.  
\end{remark}

The following theorem is a straight-forward implication of Sawicki's main result in \cite{warpedcones3}:
\begin{theorem}\cite[Theorem 1.1]{warpedcones3}
Let $(M,d_M)$ be a compact  Riemannian manifold and $\Gamma$ be a finitely generated group acting on $M$ by Lipschitz homeomorphisms.  If $\Gamma$ has property $(T_{\B})$ for every super-reflexive Banach space $\B$,  then for every increasing sequence $\lbrace t_i \rbrace_{i \in \mathbb{N}} \subseteq \mathbb{R}_{>0}$ tending to infinity,   the family $\lbrace (M,  d^{t_i}_{\Gamma}) \rbrace_{i \in \mathbb{N}}$ is quasi-isometric to a super-expander.
\end{theorem}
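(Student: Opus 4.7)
The plan is to pass through a discretization of the warped cone and apply Banach property $(T)$ to the natural representation of $\Gamma$ on the Bochner space $L^2(M;\B)$. Fix a super-reflexive Banach space $\B$. By Theorem~\ref{L^2 sum of uc spaces thm} the Bochner space $L^2(M;\B)$ is itself uniformly convex, so by hypothesis $\Gamma$ has property $(T_{L^2(M;\B)})$. The Lipschitz $\Gamma$-action on $(M,d_M)$ preserves the measure class of the Riemannian volume, so after the standard Radon--Nikodym renormalization it yields a continuous linear isometric representation $\pi$ of $\Gamma$ on $L^2(M;\B)$. Property $(T)$ then furnishes a finite generating set $S\subseteq\Gamma$ and an $\varepsilon>0$ such that every $f\in\B'(\pi)\subseteq L^2(M;\B)$ (with respect to the decomposition of Proposition~\ref{B-fixed + B' prop}) satisfies
\[
\max_{s\in S}\|\pi(s)f-f\|_{L^2(M;\B)} \;\geq\; \varepsilon\,\|f\|_{L^2(M;\B)}.
\]

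Next, for each $t=t_i$ I would pick a maximal $(1/t)$-separated net $N_t\subseteq M$; by standard volume comparison $N_t$ is finite, and one checks directly from the definition of $d_\Gamma^t$ that $(N_t,d_\Gamma^t)$ is quasi-isometric to $(M,d_\Gamma^t)$ with constants independent of $t$. I would equip $N_t$ with the graph whose edges are pairs $\{x,y\}$ for which either $d_M(x,y)\lesssim 1/t$ or $y$ lies within $1/t$ of $s\cdot x$ for some $s\in S$; this graph has uniformly bounded degree. To prove the family $\{(N_t,E_t)\}_t$ has the $\B$-Poincar\'e inequality with uniform constants, take $\phi:N_t\to\B$, spread it to $\tilde\phi\in L^2(M;\B)$ using a partition of unity subordinate to the Voronoi cells of $N_t$, split $\tilde\phi = \tilde\phi_0 + \tilde\phi_1$ according to Proposition~\ref{B-fixed + B' prop} (so $\tilde\phi_0\in L^2(M;\B)^{\pi(\Gamma)}$ and $\tilde\phi_1\in\B'(\pi)$), and apply the Kazhdan inequality above to $\tilde\phi_1$. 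The point is that, by the definition of the warping, a generator $s\in S$ contributes $d_\Gamma^t$-length $O(1)$ rather than $O(t)$, so $\|\pi(s)\tilde\phi-\tilde\phi\|_{L^2(M;\B)}$ translates into a sum of $\B$-differences along edges of $(N_t,E_t)$; combined with the spatial-edge contributions coming from overlaps in the partition of unity, this yields a Poincar\'e inequality for $\phi$ on $N_t$ with constants independent of $t$ and of $\B$ (within a fixed modulus-of-convexity class, using Theorem~\ref{L^2 sum of uc spaces thm}).

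The main obstacle is the discretization/smoothing step: one must show that the error incurred by replacing $\phi$ with its spread $\tilde\phi$ is controlled uniformly in $t$, and that the $\Gamma$-invariant part $\tilde\phi_0$ is genuinely close to the (constant) mean of $\phi$ on $N_t$ so that one really obtains a Poincar\'e inequality of the form needed for super-expansion. This requires delicate use of the Lipschitz regularity of the $\Gamma$-action, uniform bounds on the multiplicities of the partition of unity, and the bounded degree of $(N_t,E_t)$; it is precisely the technical heart of Sawicki's argument in~\cite{warpedcones3}. Once this is in place, applying the bound to every super-reflexive $\B$ shows that $\{(N_t,E_t)\}$ is a super-expander family, and since each $(N_t,d_\Gamma^t)$ is quasi-isometric to $(M,d_\Gamma^t)$ with constants independent of $t$, the warped cones $(M,d_\Gamma^{t_i})$ are quasi-isometric to a super-expander as claimed.
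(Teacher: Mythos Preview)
The paper does not prove this theorem at all: it is stated as a direct citation of Sawicki's result \cite[Theorem 1.1]{warpedcones3}, and no argument is given beyond the sentence ``The following theorem is a straight-forward implication of Sawicki's main result in \cite{warpedcones3}.'' So there is nothing in the paper to compare your proposal against; you are supplying a proof where the paper deliberately outsources one.

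That said, your sketch is a fair outline of how the argument in \cite{warpedcones3} actually proceeds: one passes to $L^2(M;\B)$ (which is uniformly convex by Theorem~\ref{L^2 sum of uc spaces thm}), uses the Kazhdan pair coming from $(T_{L^2(M;\B)})$, discretizes the levels of the warped cone by $(1/t)$-nets, and transfers the spectral gap to a Poincar\'e inequality on the resulting bounded-degree graphs. You correctly identify the genuine technical work --- the uniform control in $t$ of the spreading/partition-of-unity step and the handling of the invariant component --- and you correctly locate it in Sawicki's paper rather than claim it is routine. One small caution: the claim that a Lipschitz $\Gamma$-action automatically gives an isometric representation on $L^2(M;\B)$ after Radon--Nikodym renormalization is not innocent; in \cite{warpedcones3} some care is taken with the hypotheses on the action and the measure, and you should not treat this as a throwaway line.
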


Combining this theorem with Theorem \ref{T for SL_n R intro thm} leads to the following theorem stated in the introduction:

\begin{theorem}
Let $n \geq 3$ and let $(M,d_M)$ be a compact Riemannian manifold such that $\SL_n (\mathbb{Z})$ acts on $M$ by Lipschitz homeomorphisms.  
For every increasing sequence $\lbrace t_i \rbrace_{i \in \mathbb{N}} \subseteq \mathbb{R}$ tending to infinity,   the family $\lbrace (M,  d^{t_i}_{\SL_n (\mathbb{Z})}) \rbrace_{i \in \mathbb{N}}$ is quasi-isometric to a super-expander.
\end{theorem}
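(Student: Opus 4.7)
The plan is essentially a one-line combination of two results already assembled in the preceding subsection. First I would invoke Theorem \ref{T for SL_n R intro thm}, which establishes that for every $n \geq 3$ the group $\SL_n(\mathbb{Z})$ has property $(T_\B)$ for every super-reflexive Banach space $\B$. This is the heavy input and it is stated and proved earlier in the paper, so I would simply cite it.

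Next I would apply Sawicki's theorem (the theorem quoted immediately above the one to be proved), with $\Gamma = \SL_n(\mathbb{Z})$ and the given action on $(M,d_M)$ by Lipschitz homeomorphisms. The hypothesis of Sawicki's theorem is exactly that $\Gamma$ has property $(T_\B)$ for every super-reflexive Banach space $\B$, which is supplied by the previous step. The conclusion is that for every increasing sequence $\lbrace t_i \rbrace_{i \in \mathbb{N}} \subseteq \mathbb{R}_{>0}$ tending to infinity, the family $\lbrace (M, d^{t_i}_{\SL_n(\mathbb{Z})}) \rbrace_{i \in \mathbb{N}}$ is quasi-isometric to a super-expander.

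Since the definition of $d_\Gamma^t$ a priori depends on the choice of finite symmetric generating set $S$, I would briefly note (as the paper already remarks) that for any two such generating sets the resulting metrics are Lipschitz equivalent, so the quasi-isometry class of the family is independent of this choice and the statement is unambiguous. There is no genuine obstacle here: the work has been done in proving Theorem \ref{T for SL_n R intro thm} and in Sawicki's theorem, and the present theorem is the formal combination of the two.
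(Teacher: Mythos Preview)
Your proposal is correct and matches the paper's approach exactly: the paper simply states that the theorem follows by combining Sawicki's theorem with Theorem \ref{T for SL_n R intro thm}, without even writing out a formal proof. Your additional remark about independence from the choice of generating set is already handled by the paper's earlier remark, so there is nothing more to add.
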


\subsection{Property $(FF_\B)$ for $\SL_{n} (\mathbb{R}),  \SL_{n} (\mathbb{Z})$}
\label{prop FFB subsec}
When $\B$ is reflexive it follows for the Ryll-Nardzewski fixed-point Theorem that $G$ has property $(F_{\B})$ if and only if for every isometric linear representation $\pi : G \rightarrow O(\B)$ it holds that every continuous $1$-cocycle into $\pi$ is bounded.  This lead to the stronger notion of property $(FF_{\B})$ defined by  Mimura \cite{Mimura1} as a Banach version of Monod's \cite{Monod} property (TT):  Given a continuous isometric linear representation $\pi : G \rightarrow O(\B)$, a continuous \textit{quasi-$1$-cocycle} into $\pi$ is a continuous map $c: G \rightarrow \B$ such that 
$$\sup_{g,h \in G} \Vert c (gh) - (c (g) + \pi (g) c (h))\Vert < \infty.$$
A group $G$ is said to have property  \textit{property $(FF_{\B})$} if for every continuous isometric linear representation $\pi : G \rightarrow O(\B)$ it holds that every continuous quasi-$1$-cocycle into $\pi$ is bounded. 

The following result of de Laat,  Mimura and de la Salle allows one to deduce property $(FF_{\B})$ from property $(T_{\B})$:
\begin{theorem}
\cite[Section 5]{LMS} 
Let $n \geq 3$ and $\B$ be a super-reflexive Banach space.  For $R = \mathbb{Z},  \mathbb{R}$,  if $\SL_n (R)$ has property $(T_{\B})$, then $\SL_{n+2} (R)$ has property $(FF_{\B})$.
\end{theorem}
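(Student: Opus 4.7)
The plan is to adapt the scheme of de Laat, Mimura, and de la Salle as carried out in \cite[Section 5]{LMS}. Fix a continuous isometric representation $\pi : \SL_{n+2}(R) \to O(\B)$ and a continuous quasi-cocycle $c : \SL_{n+2}(R) \to \B$ into $\pi$ with defect $D = \sup_{g,h} \Vert c(gh) - c(g) - \pi(g) c(h) \Vert < \infty$. The task is to prove $\Vert c \Vert_\infty < \infty$.

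The first step is to exploit a rich commuting structure in $\SL_{n+2}(R)$. For each pair $i \neq j$ in $\{1,\ldots,n+2\}$, the root subgroup $E_{i,j} = \{e_{i,j}(t) : t \in R\}$ is centralized by a copy $H_{i,j} \cong \SL_n(R)$ acting on the $n$ coordinates complementary to $\{i,j\}$ (here the ``$+2$'' is crucial: it leaves exactly $n$ coordinates for the centralizing $\SL_n(R)$). By hypothesis each $H_{i,j}$ has property $(T_{\B})$. For commuting $g \in E_{i,j}$ and $h \in H_{i,j}$, equating the two expansions of $c(gh) = c(hg)$ yields the basic identity
\[
(I - \pi(g)) c(h) - (I - \pi(h)) c(g) = O(D).
\]

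The heart of the argument is to convert this identity into a uniform bound on $c|_{E_{i,j}}$. Decompose $\B = \B^{\pi(H_{i,j})} \oplus \B'(\pi|_{H_{i,j}})$ as in Proposition \ref{B-fixed + B' prop}. Fix $g \in E_{i,j}$; the identity says that the map $h \mapsto (I-\pi(g)) c(h)$ agrees with the coboundary $h \mapsto (I-\pi(h)) c(g)$ up to $O(D)$ on $H_{i,j}$. Project to the second summand: property $(T_{\B})$ for $H_{i,j}$ provides a Kazhdan pair, and a uniform-convexity argument (of the type used throughout this paper, e.g.\ Corollary \ref{uc ineq coro}) shows that if $\{(I-\pi(h)) c(g)\}_{h \in K}$ is uniformly bounded on a compact Kazhdan set $K$, then the component of $c(g)$ in $\B'(\pi|_{H_{i,j}})$ is bounded independently of $g$. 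On the other summand $\B^{\pi(H_{i,j})}$, the subgroup $E_{i,j}$ acts by a continuous representation of an unbounded abelian group, and a Mautner-type argument (Theorem \ref{metric maut thm}) applied in the ambient $\SL_{n+2}(R)$ forces the contribution to be bounded. Hence $c|_{E_{i,j}}$ is bounded.

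To conclude, invoke the Carter--Keller bounded generation theorem (Theorem \ref{bounded gene result thm}): every element of $\SL_{n+2}(R)$ is a product of at most $\nu$ root elements. The quasi-cocycle identity applied inductively converts the root-subgroup bound into a global bound $\Vert c \Vert_\infty \leq C \nu (\max_{i,j} \Vert c|_{E_{i,j}} \Vert_\infty + D)$, proving $c$ is bounded as required.

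The main obstacle is the middle paragraph: the Hilbert-space version of this step uses orthogonal projection and spectral calculus, and translating it to a general super-reflexive Banach space forces one to rely on the uniform-convexity inequalities of \cref{Uniformly convexity subsec} and the splitting of Proposition \ref{B-fixed + B' prop} rather than on Hilbert structure. This is precisely the technical work carried out in \cite[Section 5]{LMS}, and the present corollary follows by invoking their theorem with $\SL_n(R)$'s property $(T_{\B})$ (Theorem \ref{T for SL_n R intro thm}) as input.
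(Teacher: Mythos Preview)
The paper does not prove this theorem; it simply quotes \cite[Section 5]{LMS} as a black box and then combines it with Theorem \ref{T for SL_n R intro thm} to obtain Corollary \ref{FF intro coro}. So there is no ``paper's own proof'' to compare against beyond the bare citation, and in that sense your final paragraph---invoking \cite{LMS} directly---already matches what the paper does.

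That said, the sketch you give of the \cite{LMS} argument has a genuine gap in the treatment of the $\B^{\pi(H_{i,j})}$-component. You appeal to the metric Mautner phenomenon (Theorem \ref{metric maut thm}) ``in the ambient $\SL_{n+2}(R)$'' to bound that component. This fails on two counts. First, Theorem \ref{metric maut thm} is stated only for connected simple real Lie groups, so it is simply unavailable when $R=\mathbb{Z}$. Second, even for $R=\mathbb{R}$, Mautner applies to genuine continuous isometric actions on metric spaces; a quasi-cocycle does \emph{not} define an isometric action on $\B$ (only a quasi-action), so the theorem cannot be invoked as written. The actual argument in \cite{LMS} does not pass through Mautner here; it extracts the bound on $c\vert_{E_{i,j}}$ from the commuting product structure and property $(T_\B)$ alone. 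A smaller point: Theorem \ref{bounded gene result thm} (Carter--Keller) is stated only for $\mathbb{Z}$; for $R=\mathbb{R}$ bounded generation by elementary subgroups is immediate from Gaussian elimination, so the final assembly step still goes through, but not by the reference you cite.
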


Combining this theorem with Theorem \ref{T for SL_n R intro thm} yields the following corollary that appeared in the introduction (Corollary \ref{FF intro coro}):
\begin{corollary}
For every $n \geq 5$ and every super-reflexive Banach space $\B$,  the groups $\SL_n (\mathbb{Z}),  \SL_n (\mathbb{R})$ have property $(FF_{\B})$.
\end{corollary}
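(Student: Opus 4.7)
The plan is to deduce this corollary by directly chaining together the two results stated immediately above it. The heart of the matter is already done: Theorem \ref{T for SL_n R intro thm} supplies Banach property $(T_\B)$ for $\SL_m(\mathbb{Z})$ and $\SL_m(\mathbb{R})$ whenever $m \geq 3$ and $\B$ is super-reflexive, while the cited theorem of de Laat, Mimura and de la Salle promotes property $(T_\B)$ for $\SL_m(R)$ to property $(FF_\B)$ for $\SL_{m+2}(R)$ for $R \in \{\mathbb{Z}, \mathbb{R}\}$.

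The steps would be as follows. Fix a super-reflexive Banach space $\B$ and an integer $n \geq 5$. Set $m = n - 2 \geq 3$. First I would invoke Theorem \ref{T for SL_n R intro thm} applied to this $m$: it yields that both $\SL_m(\mathbb{Z})$ and $\SL_m(\mathbb{R})$ have property $(T_\B)$. Next, I would apply the cited theorem from \cite[Section 5]{LMS} separately to each of $R = \mathbb{Z}$ and $R = \mathbb{R}$: since $\SL_m(R)$ has property $(T_\B)$ and $m \geq 3$, the theorem concludes that $\SL_{m+2}(R) = \SL_n(R)$ has property $(FF_\B)$. Combining these two instances gives the statement for $\SL_n(\mathbb{Z})$ and $\SL_n(\mathbb{R})$ simultaneously.

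Since the statement is literally a concatenation of two black-box results, there is no genuine obstacle in the argument; the only thing to watch is the index bookkeeping ($n \geq 5$ is exactly what is needed so that the hypothesis $m = n - 2 \geq 3$ of Theorem \ref{T for SL_n R intro thm} is met when we want to pull property $(FF_\B)$ back to $\SL_n$ via the $+2$ shift from the LMS theorem). All the substantive work — the relative property $(T)$ argument of Section \ref{Relative Banach property sec}, the bounded generation step of Section \ref{Bounded generation and Banach property (T) sec}, the passage to $\SL_n$ for general $n$, and the LMS induction trick — has already been carried out earlier in the paper or in \cite{LMS}, so the proof is a two-line citation.
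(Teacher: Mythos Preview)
Your proposal is correct and matches the paper's approach exactly: the paper simply states that the corollary follows by combining the cited LMS theorem with Theorem \ref{T for SL_n R intro thm}, and your write-up spells out precisely this combination with the index shift $m = n-2 \geq 3$.
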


\bibliographystyle{alpha}
\bibliography{bib1}

\def\cprime{$'$}
\begin{thebibliography}{dLMdlS16}

\bibitem[BdlHV08]{PropTBook}
Bachir Bekka, Pierre de~la Harpe, and Alain Valette.
\newblock {\em Kazhdan's property ({T})}, volume~11 of {\em New Mathematical
  Monographs}.
\newblock Cambridge University Press, Cambridge, 2008.

\bibitem[BFGM07]{BFGM}
Uri {Bader}, Alex {Furman}, Tsachik {Gelander}, and Nicolas {Monod}.
\newblock {Property \((T)\) and rigidity for actions on Banach spaces}.
\newblock {\em {Acta Math.}}, 198(1):57--105, 2007.

\bibitem[BG17]{BG}
Uri Bader and Tsachik Gelander.
\newblock Equicontinuous actions of semisimple groups.
\newblock {\em Groups Geom. Dyn.}, 11(3):1003--1039, 2017.

\bibitem[BGM12]{BGM}
U.~Bader, T.~Gelander, and N.~Monod.
\newblock A fixed point theorem for {$L^1$} spaces.
\newblock {\em Invent. Math.}, 189(1):143--148, 2012.

\bibitem[BL00]{BL}
Yoav Benyamini and Joram Lindenstrauss.
\newblock {\em Geometric nonlinear functional analysis. {V}ol. 1}, volume~48 of
  {\em American Mathematical Society Colloquium Publications}.
\newblock American Mathematical Society, Providence, RI, 2000.

\bibitem[CK83]{CK}
David Carter and Gordon Keller.
\newblock Bounded elementary generation of {${\rm SL}_{n}(\mathcal{O})$}.
\newblock {\em Amer. J. Math.}, 105(3):673--687, 1983.

\bibitem[Day41]{Day}
Mahlon~M. Day.
\newblock Some more uniformly convex spaces.
\newblock {\em Bull. Amer. Math. Soc.}, 47:504--507, 1941.

\bibitem[dLdlS15]{LaatSalle3}
Tim de~Laat and Mikael de~la Salle.
\newblock Strong property ({T}) for higher-rank simple {L}ie groups.
\newblock {\em Proc. Lond. Math. Soc. (3)}, 111(4):936--966, 2015.

\bibitem[dLdlS18]{LaatSalle2}
Tim de~Laat and Mikael de~la Salle.
\newblock Approximation properties for noncommutative {$L^p$}-spaces of high
  rank lattices and nonembeddability of expanders.
\newblock {\em J. Reine Angew. Math.}, 737:49--69, 2018.

\bibitem[dLMdlS16]{LMS}
Tim de~Laat, Masato Mimura, and Mikael de~la Salle.
\newblock On strong property ({T}) and fixed point properties for {L}ie groups.
\newblock {\em Ann. Inst. Fourier (Grenoble)}, 66(5):1859--1893, 2016.

\bibitem[dlS19]{Salle2}
Mikael de~la Salle.
\newblock Strong property {$(T)$} for higher-rank lattices.
\newblock {\em Acta Math.}, 223(1):151--193, 2019.

\bibitem[dlS22]{dlSICM}
Mikael de~la Salle.
\newblock {A}nalysis on simple {L}ie groups and lattices.
\newblock \url{https://arxiv.org/abs/2204.12381}, April 2022.

\bibitem[dLV19]{warpedcones1}
Tim de~Laat and Federico Vigolo.
\newblock Superexpanders from group actions on compact manifolds.
\newblock {\em Geom. Dedicata}, 200:287--302, 2019.

\bibitem[DU77]{DU}
J.~Diestel and J.~J. Uhl, Jr.
\newblock {\em Vector measures}.
\newblock Mathematical Surveys, No. 15. American Mathematical Society,
  Providence, R.I., 1977.
\newblock With a foreword by B. J. Pettis.

\bibitem[FNvL19]{warpedcones2}
David Fisher, Thang Nguyen, and Wouter van Limbeek.
\newblock Rigidity of warped cones and coarse geometry of expanders.
\newblock {\em Adv. Math.}, 346:665--718, 2019.

\bibitem[Jol05]{Jol}
Paul Jolissaint.
\newblock On property ({T}) for pairs of topological groups.
\newblock {\em Enseign. Math. (2)}, 51(1-2):31--45, 2005.

\bibitem[Ka{\v{z}}67]{Kazhdan}
D.~A. Ka{\v{z}}dan.
\newblock On the connection of the dual space of a group with the structure of
  its closed subgroups.
\newblock {\em Funkcional. Anal. i Prilo\v zen.}, 1:71--74, 1967.

\bibitem[Laf08]{Laff1}
Vincent Lafforgue.
\newblock Un renforcement de la propri\'{e}t\'{e} ({T}).
\newblock {\em Duke Math. J.}, 143(3):559--602, 2008.

\bibitem[Laf09]{Laff2}
Vincent Lafforgue.
\newblock Propri\'{e}t\'{e} ({T}) renforc\'{e}e banachique et transformation de
  {F}ourier rapide.
\newblock {\em J. Topol. Anal.}, 1(3):191--206, 2009.

\bibitem[Lia14]{Liao}
Benben Liao.
\newblock Strong {B}anach property ({T}) for simple algebraic groups of higher
  rank.
\newblock {\em J. Topol. Anal.}, 6(1):75--105, 2014.

\bibitem[LT79]{BSpacesBook}
Joram Lindenstrauss and Lior Tzafriri.
\newblock {\em Classical {B}anach spaces. {II}}, volume~97 of {\em Ergebnisse
  der Mathematik und ihrer Grenzgebiete [Results in Mathematics and Related
  Areas]}.
\newblock Springer-Verlag, Berlin-New York, 1979.
\newblock Function spaces.

\bibitem[Mar91]{Mar}
G.~A. Margulis.
\newblock {\em Discrete subgroups of semisimple {L}ie groups}, volume~17 of
  {\em Ergebnisse der Mathematik und ihrer Grenzgebiete (3) [Results in
  Mathematics and Related Areas (3)]}.
\newblock Springer-Verlag, Berlin, 1991.

\bibitem[Mim11]{Mimura1}
Masato Mimura.
\newblock Fixed point properties and second bounded cohomology of universal
  lattices on {B}anach spaces.
\newblock {\em J. Reine Angew. Math.}, 653:115--134, 2011.

\bibitem[Mim13]{Mimura2}
Masato Mimura.
\newblock Fixed point property for universal lattice on {S}chatten classes.
\newblock {\em Proc. Amer. Math. Soc.}, 141(1):65--81, 2013.

\bibitem[MN14]{MendelNaor}
Manor Mendel and Assaf Naor.
\newblock Nonlinear spectral calculus and super-expanders.
\newblock {\em Publ. Math. Inst. Hautes \'{E}tudes Sci.}, 119:1--95, 2014.

\bibitem[Mon01]{Monod}
Nicolas Monod.
\newblock {\em Continuous bounded cohomology of locally compact groups}, volume
  1758 of {\em Lecture Notes in Mathematics}.
\newblock Springer-Verlag, Berlin, 2001.

\bibitem[Nao13]{Minerva}
Assaf Naor.
\newblock {M}inerva {L}ectures 2013 - {A}ssaf {N}aor {T}alk 3:
  {S}uper-expanders and nonlinear spectral calculus.
\newblock \url{https://www.youtube.com/watch?v=4a1OqpvrWiA}, October 2013.

\bibitem[Obe18]{Oberw}
Mini-workshop: superexpanders and their coarse geometry.
\newblock {\em Oberwolfach Rep.}, 15(2):1117--1160, 2018.
\newblock Abstracts from the mini-workshop held April 15--21, 2018, Organized
  by Anastasia Khukhro, Tim de Laat and Mikael de la Salle.

\bibitem[Roe05]{Roe}
John Roe.
\newblock Warped cones and property {A}.
\newblock {\em Geom. Topol.}, 9:163--178, 2005.

\bibitem[Sal16]{Salle}
Mikael de~la Salle.
\newblock Towards strong {B}anach property ({T}) for {SL{$(3,\Bbb{R})$}}.
\newblock {\em Israel J. Math.}, 211(1):105--145, 2016.

\bibitem[Saw20]{warpedcones3}
Damian Sawicki.
\newblock Super-expanders and warped cones.
\newblock {\em Ann. Inst. Fourier (Grenoble)}, 70(4):1753--1774, 2020.

\bibitem[Sha99]{Shalom1}
Yehuda Shalom.
\newblock Bounded generation and {K}azhdan's property ({T}).
\newblock {\em Inst. Hautes \'{E}tudes Sci. Publ. Math.}, (90):145--168 (2001),
  1999.

\bibitem[Sha00]{Shalom3}
Yehuda Shalom.
\newblock Rigidity of commensurators and irreducible lattices.
\newblock {\em Invent. Math.}, 141(1):1--54, 2000.

\bibitem[Vee79]{Veech}
William~A. Veech.
\newblock Weakly almost periodic functions on semisimple {L}ie groups.
\newblock {\em Monatsh. Math.}, 88(1):55--68, 1979.

\bibitem[Vig19]{warpedcones0}
Federico Vigolo.
\newblock Discrete fundamental groups of warped cones and expanders.
\newblock {\em Math. Ann.}, 373(1-2):355--396, 2019.

\end{thebibliography}
%\Addresses
\end{document}